\newtheorem{thm}{Theorem} 
\newtheorem{cor}{Corollary} 
\newtheorem{lem}{Lemma} 
\newtheorem{prop}{Proposition}
\newtheorem{example}{Example} 
\newtheorem{remarks}{Remark} 
\newtheorem{note}{Note} 
\newtheorem{defn}{Definition}
\newtheorem{hyp}{Hypothesis} 
\numberwithin{equation}{section}
\date{}
\def\BB{\mathcal B}
\def\MM{\mathcal M}
\def\<{\langle}
\def\>{\rangle}
\def\d"{^{\prime\prime}}
\def\bhyp{\begin{hyp}}
\def\nhyp{\end{hyp}}
\def\bbeq{\begin{equation}}
\def\nneq{\end{equation}}
\def\bdef{\begin{defn}}
\def\ndef{\end{defn}}
\def\bthm{\begin{thm}}
\def\nthm{\end{thm}}
\def\bprop{\begin{prop}}
\def\nprop{\end{prop}}
\def\brmk{\begin{remarks}}
\def\nrmk{\end{remarks}}
\def\bexa{\begin{example}}
\def\nexa{\end{example}}
\def\blem{\begin{lem}}
\def\nlem{\end{lem}}
\def\bcor{\begin{cor}}
\def\ncor{\end{cor}}
\def\bexe{\begin{exe}}
\def\nexe{\end{exe}}
\def\bprf{\begin{proof}}
\def\nprf{\end{proof}}
\def\bdes{\begin{description}}
\def\ndes{\end{description}}
\def\benu{\begin{enumerate}}
\def\nenu{\end{enumerate}}
\begin{document}

 \title[Killed Feynman-Kac semigroups]
 {Long time behavior of killed Feynman-Kac semigroups   with singular  Schrödinger potentials}

\author[A. Guillin]{\textbf{\quad {Arnaud} Guillin$^{\dag}$    }}
\address{{\bf Arnaud Guillin}. Universit\'e Clermont Auvergne, CNRS, LMBP, F-63000 CLERMONT-FERRAND, FRANCE}
 \email{arnaud.guillin@uca.fr}
 
 \author[D. Lu]{\textbf{\quad Di Lu$^{\dag}$  }}
 \address{{\bf Di Lu}.  School of Mathematics and Statistics, Yunnan University, Kunming 650091, China, and School of Mathematical Sciences, Dalian University of Technology, Dalian 116024, China}
 \email{diluMath@hotmail.com}
  
\author[B. Nectoux]{\textbf{\quad Boris Nectoux$^{\dag}$  }}
\address{{\bf Boris Nectoux}.Universit\'e Clermont Auvergne, CNRS, LMBP, F-63000 CLERMONT-FERRAND, FRANCE}
 \email{boris.nectoux@uca.fr}

\author[L. Wu]{\textbf{\quad Liming Wu$^{\dag}$ \, \, }}
\address{{\bf Liming Wu}. Universit\'e Clermont Auvergne, CNRS, LMBP, F-63000  CLERMONT-FERRAND, FRANCE, and, 
Institute for Advanced Study in Mathematics, Harbin Institute of Technology, Harbin 150001, China}
\email{Li-Ming.Wu@uca.fr}

\date{\today}

\begin{abstract} In this work, we investigate the compactness and the long time behavior of killed  Feynman-Kac semigroups of various processes arising from statistical physics  with very general singular Schrödinger potentials.  The processes we consider  cover a large class of processes used in statistical physics, with strong links with quantum mechanics and (local or not) Schrödinger operators (including e.g. fractional Laplacians).  For instance we consider  solutions to elliptic differential equations,      Lévy processes,  the kinetic Langevin process with locally Lipschitz gradient fields, and  systems of interacting Lévy particles. Our analysis relies on a Perron-Frobenius type theorem derived in [A. Guillin, B. Nectoux, L. Wu, 2020 J. Eur. Math. Soc.] for Feller kernels and on  the tools introduced in [L. Wu, 2004, Probab. Theory Relat. Fields] to compute bounds on the essential spectral radius of a bounded nonnegative  kernel. 
\end{abstract}

%%%

\maketitle
\noindent {\it AMS 2010 Subject classifications.}   47D08, 60G51,	37A30,  37A60,     60J60. \\
 \noindent {\it Key words.} Feynman-Kac, singular potentials, Schrödinger,   quasi-stationary distributions, Brownian particle, Lévy processes, kinetic Langevin.  
% 
 %Schrödinger
    %probabilistic representation of the semigroup 
 
 %%%%

\section{Setting and results}

%%%

\subsection{Feynman-Kac semigroups and models}
\label{sec.FK}
%%%
 
 \subsubsection{Setting and purpose of this work}
 The purpose of this work is to study the basic properties and above all,  the  existence and the uniqueness of a quasi-stationary distribution as well as the exponential convergence towards this quasi-stationary distribution (see more precisely Definition \ref{de.2}) of killed Feynman-Kac semigroups of several models with very general singular Schrödinger potentials   (see Section \ref{sec.Sc} and more precisely  \textbf{[S1]}, \textbf{[S2]},  and  \eqref{eq.Ui}).    
  The killing occurs when the process exits  a (position) subdomain\footnote{A domain is by definition a non-empty, open, and connected set.}. We have no assumption on the regularity of the boundary of this subdomain which can be bounded or not and  whose boundary can intersect the points where  the Schrödinger potential  is infinite (which is the case of interest). 
 For the first three considered processes, the singular potential arises from a fixed  point located at $0$ whereas  for the fourth one, it is created by $n$ interacting Lévy  particles (see the fourth  model below in Section~\ref{sec.IVB}). As explained in Section~\ref{sec.Ss}, more general singular potentials can be considered.  
 This work is also motivated by the strong link between Feynman-Kac semigroups  and  solution of evolution equations associated with a  Schrödinger type  operator, see~\eqref{eq.Sch}.

 The introduction and the paper are organized as follows. We first introduce the class of singular Schrödinger potentials we will treat in details  in this work and we then give the  notation which will be used throughout  this work. Next, we introduce the four     models and we give preliminary results on their behaviors. These are the purposes of Section~\ref{sec.FK}. The main results, which are Theorems~\ref{th.oL}, \ref{th.L},  \ref{th.kL},  and~\ref{th.Bn} (see also Theorem \ref{th.Fil}),   are given in Section \ref{sec.MR}. We provide also extensions of these results  in Section~\ref{sec.Ss}. The related literature is given in Section \ref{sec.RR}. Section \ref{sec.proof} is dedicated to the proofs of the main results.

In all this work, the set $(\Omega, \mathcal F, (\mathcal F_t)_{t\ge 0}, \mathbb P)$ is   a  filtered probability space, where the filtration satisfies the usual condition,  $(B_s,s\ge 0)$ is a $\mathscr R^d$-standard Brownian motion,  and\footnote{Except in Section  \ref{sec.sec-ns} where, only in this section,    $d\ge 1$.} 
$$d\ge 2.$$

 \subsubsection{Singular Schrödinger potential}
 \label{sec.Sc}
 As already mentioned above, we will consider the killed Feynman-Kac semigroups of four models used  in statistical mechanics (which are also related to quantum mechanics through their generators) though many other processes  can also be treated with our techniques (see Section \ref{sec.Ss}). For the first three models we will work with a    very general singular (at $0$) Schrödinger potential $\mathbf V_{\mathbf S}$. More precisely,   $\mathbf V_{\mathbf S}:\mathscr R^d\setminus \{0\}\to \mathscr R$ is assumed to be continuous, lower bounded  (say by $-k_{\mathbf S}$, $k_{\mathbf S}\ge 0$) and satisfies one of the two conditions:
\begin{enumerate}
\item[]\textbf{[S1]}  $\mathbf V_{\mathbf S} \to +\infty$ if and only if $|x|\to 0^+$.
 \item[] \textbf{[S2]} $\mathbf V_{\mathbf S} \to +\infty$  if (and only if) $|x|\to 0^+$ or  $|x|\to +\infty$.
 \end{enumerate}
 Assumption \textbf{[S2]} differs from \textbf{[S1]} because the potentials satisfying  \textbf{[S2]} confine also at $+\infty$. 
Notice that we can consider any kind of singularities at $0$, e.g. the Coulomb potential, the Riesz potential,   the Lennard-Jones potential, and the  log-potential.   
 Note that near $0$ the potential $ \mathbf V_{\mathbf S}$ is repulsive. In the fourth model below, the singularities in the Schrödinger potentials  are created  when the interacting (moving) Lévy particles collide, see more precisely \eqref{eq.Ui}. We also mention that our techniques allow to consider more general singularities, see   Section~\ref{sec.Ss} for examples.
 %%%

\subsubsection{Notation}
 Let $\mathscr S$ be a polish space and denote by   $\mathcal B(\mathscr S)$ the   Borel $\sigma$-algebra over  $\mathscr S$.  
In the following, $b\mathcal B(\mathscr S)$ (resp. $\mathcal C_{b}(\mathscr S)$) is the set of  bounded   measurable (resp. bounded and continuous) functions $f:\mathscr S\to \mathscr R$. 
For a measurable function $\mathbf W: \mathscr S\to [1,+\infty]$,  we denote by $b\mathcal B_{\mathbf W}(\mathscr S)$  (resp. $\mathcal C_{b\mathbf W}(\mathscr S)$) the space of measurable (resp. continuous) functions $f:\mathscr S\to \mathscr R$ such that $f/\mathbf W$ is bounded over $\mathscr S$. These spaces are endowed with the norm $\Vert f \Vert _{b\mathcal B_{\mathbf W}(\mathscr S)}:= \sup_{\mathscr S} |f/\mathbf W|$. The function   $\mathbf 1$ denotes the constant function over $\mathscr S$ which equals $1$.   The set $\mathcal P(\mathscr S)$ is the space of probability measures over $\mathscr S$ and $\mathcal P_{\mathbf W}(\mathscr S):=\{\nu \in \mathcal P(\mathscr S), \nu(\mathbf W)<+\infty\}$.  Note that $b\mathcal B_{\mathbf 1}(\mathscr S)=b\mathcal B(\mathscr S)$ and $\mathcal P_{\mathbf 1}(\mathscr S)=\mathcal P(\mathscr S)$.  The space  $\mathcal C([0,T],\mathscr S)$ denotes the  space of continuous    $\mathscr S$-valued functions over $[0,T]$ and  the (Skorokhod) space of   $\mathscr S$-valued functions 
 that are right-continuous and have left-hand limits (say \textit{càdlàg}) over $[0,T]$ is denoted by $\mathcal D([0,T],\mathscr S)$.   
For a bounded linear operator $T$ over a Banach space, we denote by $\mathsf r_{sp}(T)$ its spectral radius, and by $\mathsf r_{ess}(T)$ its essential spectral radius (see e.g.~\cite[Section 3.1]{guillinqsd} for a definition). 
 For a stochastic process $(\mathfrak y_t,t\ge 0)$ and $T\ge 0$, we denote by   $\mathfrak y_{[0,T]}=(\mathfrak y_t,t\in [0,T])$ the trajectory of the process up to time~$T$.

\subsubsection{First model, a laboratory Brownian elliptic model} In this section, we introduce    the overdamped Langevin process \eqref{eq.Lsur}, which is  an  elliptic diffusion driven by a Brownian noise. This is our first model. The  analysis  of its killed Feynman-Kac semigroup will turn out  to be  very instructive, which explains our choice to start with this prototypical model. 
 \medskip
 
 \noindent
\textbf{Model 1.}
For $x\in \mathscr R^d$, consider a single Brownian particle $(X_t,t\ge 0)$  solution to the elliptic stochastic differential equation in $\mathscr R^d$ %\textcolor{blue}{(Extensions:  (a) two brownian particles with $V_s(x,y)=1/|x-y|^a$ [the probability that two independent brownian motions starting at $x\neq y$ meets is known and depends on the dimension $d$], (b) kinetic Langevin process, (c) Lévy $\alpha$-stable process)}:
\begin{equation}\label{eq.Lsur}
dX_t= \mathbf b_{\mathbf c}(X_t)dt + d B_t, \ X_0=x.
\end{equation}
In all this work,  $\mathbf b_{\mathbf c}:\mathscr R^d\to \mathscr R^d$ is   a   locally Lipschitz vector field. We define the following two assumptions: 
   \begin{enumerate}
\item[]\textbf{[c1]}  $\lim_{|x|\to +\infty}  \mathbf b_{\mathbf c}(x)\cdot \frac{x}{|x|}=-\infty$.
 \item[] \textbf{[c2]} $| \mathbf b_{\mathbf c}|$ has at most linear growth over $\mathscr R^d$.
 \end{enumerate} 
 We mention that the case when the potential $\mathbf b_{\mathbf c}$ is singular has already been treated in~\cite{guillinqsd2,guillinqsd3}\footnote{For both elliptic and hypoelliptic processes.} and thus will not  be considered in this work.

\begin{prop}\label{pr.Pre}
Assume  {\rm \textbf{[c1]}} or {\rm \textbf{[c2]}}.  
For all $x\in \mathscr R^d$  there exists a unique strong solution $(X_t(x),t\ge 0)$ to \eqref{eq.Lsur} over $\mathscr R^d$ whose sample paths are a.s. continuous.  
In addition, for all $T>0$,  the following  Girsanov's formula holds:  
\begin{equation}\label{eq.Girsanov}
\frac{d \mathbf P_x}{d\mathbf P_{x}^0}{  \Big|_{\mathcal F_T}}= m_T^0(x)  ,
\end{equation}
where $ \mathbf P_x$ (resp. $ \mathbf P^0_x$)  is the law of  $(X_t(x),t\ge 0)$ (resp. of $(B_s(x)=x+B_s,s\ge 0)$), and for $t\ge 0$, $
     m_t^0(x)=\exp\big[   \int_0^t \mathbf b_{\mathbf c}(B_s(x)) \cdot d B_s-\frac 12 \int_0^t  |\mathbf b_{\mathbf c}(B_s(x))  |^2 ds   \big]$ 
 is the   Doléans-Dade exponential (true) martingale associated with the process $(X_t,t\ge 0)$. 
\end{prop}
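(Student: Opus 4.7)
The plan splits into three pieces: pathwise existence and uniqueness up to explosion, non-explosion under each of the two alternative assumptions, and the verification that the Doléans-Dade exponential is a true martingale on $[0,T]$.

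\medskip

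\noindent\textbf{Step 1 (local existence and uniqueness).} Since $\mathbf b_{\mathbf c}$ is locally Lipschitz, a standard SDE truncation argument yields, for each initial condition $x$, a unique strong solution $(X_t(x),t\in [0,\tau_\infty))$ with continuous paths up to an explosion time $\tau_\infty = \lim_{R\to +\infty} \tau_R$, where $\tau_R := \inf\{t\ge 0\,:\, |X_t(x)|\ge R\}$. All that remains is to prove $\tau_\infty =+\infty$ a.s.

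\medskip

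\noindent\textbf{Step 2 (non-explosion via a Lyapunov function).} Let $V(x)=1+|x|^2$, so that $\mathcal L V(x) = 2x\cdot \mathbf b_{\mathbf c}(x)+d$, where $\mathcal L$ is the generator of \eqref{eq.Lsur}. Under \textbf{[c2]}, linear growth gives $\mathcal L V \le CV$ for some $C>0$; under \textbf{[c1]}, writing $2x\cdot \mathbf b_{\mathbf c}(x) = 2|x|\,\mathbf b_{\mathbf c}(x)\cdot x/|x|$ and using that the second factor tends to $-\infty$ at infinity, $\mathcal L V$ is bounded from above on $\mathscr R^d$ so $\mathcal L V\le C$. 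In either case Itô's formula, applied up to $t\wedge \tau_R$ and followed by taking expectations (the stochastic integral is a true martingale because the integrand is bounded on $\{|x|\le R\}$), yields $\mathbb E[V(X_{t\wedge \tau_R}(x))]\le V(x)\,e^{Ct}$ (or $V(x)+Ct$), from which $\mathbb P(\tau_R\le t)\le V(x)e^{Ct}/(1+R^2)\to 0$ as $R\to +\infty$, hence $\tau_\infty =+\infty$ a.s.

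\medskip

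\noindent\textbf{Step 3 (Girsanov formula).} Since $\mathbf b_{\mathbf c}$ is locally bounded, $(m_t^0(x))_{t\ge 0}$ is a nonnegative local martingale under $\mathbf P_x^0$; by Fatou it is a supermartingale, and the issue is to prove $\mathbf E_x^0[m_T^0(x)]=1$. Let $\sigma_R:=\inf\{t\ge 0\,:\, |B_t(x)|\ge R\}$. On $[0,T\wedge \sigma_R]$, $\mathbf b_{\mathbf c}(B_s(x))$ is bounded, hence Novikov applies to the stopped process, so $\mathbf E_x^0[m_{T\wedge \sigma_R}^0(x)]=1$ and $d\mathbf Q_R/d\mathbf P_x^0|_{\mathcal F_T} = m_{T\wedge \sigma_R}^0(x)$ defines a probability under which $B_t(x)-\int_0^{t\wedge \sigma_R} \mathbf b_{\mathbf c}(B_s(x))\,ds$ is a Brownian motion on $[0,T]$. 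Weak uniqueness then gives $\mathbf Q_R(\sigma_R>T)=\mathbf P_x(\tau_R>T)$, and by Step 2 the latter tends to $1$ as $R\to +\infty$. Writing
\bbeq
\mathbf E_x^0\big[m_T^0(x)\,\mathbf 1_{\sigma_R>T}\big] = \mathbf E_x^0\big[m_{T\wedge \sigma_R}^0(x)\,\mathbf 1_{\sigma_R>T}\big] = \mathbf Q_R(\sigma_R>T)\xrightarrow[R\to +\infty]{} 1,
\nneq
and noting that $\{\sigma_R>T\}$ is increasing in $R$ with $\mathbf P_x^0$-full union, monotone convergence yields $\mathbf E_x^0[m_T^0(x)]=1$. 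So $m_T^0(x)$ is a true martingale, and comparing laws on $\mathcal F_T$ via uniqueness in law establishes \eqref{eq.Girsanov}.

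\medskip

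\noindent\textbf{Expected main difficulty.} The verification that $m^0$ is a true martingale is the only non-routine point: under \textbf{[c2]} Novikov's condition is not automatically satisfied on $[0,T]$ because $\mathbf E_x^0[\exp(\tfrac12\int_0^T |\mathbf b_{\mathbf c}(B_s)|^2ds)]$ need not be finite for large $T$. The localization trick above, which transfers the non-explosion obtained in Step 2 into the martingale property of $m^0$, is what makes the argument go through.
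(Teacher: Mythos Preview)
Your proof is correct and follows essentially the same route as the paper: a Lyapunov non-explosion argument followed by a localization argument to upgrade the Doléans--Dade exponential to a true martingale. The paper outsources most of the work to references (Friedman under \textbf{[c2]}; Cattiaux and Wu's articles for the localization under \textbf{[c1]}), whereas you spell out both steps in a unified way; the only visible difference is that under \textbf{[c1]} the paper uses the exponential Lyapunov function $\mathbf W=e^{\epsilon\mathbf L}$ of \eqref{eq.W} instead of your quadratic $V=1+|x|^2$, but that choice is made because $\mathbf W$ is reused later in Theorem~\ref{th.oL} and is immaterial for the present proposition.
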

 %%%
 
 \begin{proof}
 Assume  {\rm \textbf{[c2]}}. Then, there exists a unique pathwise solution to  \eqref{eq.Lsur}, see e.g.~\cite[Theorem 2.2 in Section 5]{friedman1975}. The Girsanov formula then follows from~\cite[Theorem~3.1 in Section~7]{friedman1975} and~\cite[Theorem~1.1  in Section~7]{friedman1975}. Note also that in this case there exists $C>0$ such that for all $T>0$, it holds  a.s. $|X_t(x)|\le Ce^{T}(|x|+T+ \sup_{u\in [0,T]}|B_u|)$ and therefore
\begin{equation}\label{eq.energy2} 
  \mathbb P_x[\sigma^{oL}_{B(0,R)}\le t]\to 0 \text{ as $R\to +\infty$ uniformly in $x$ in the compact sets}, 
  \end{equation}
   where   $\sigma^{oL}_{B(x,R)}$ is the first exit time from the open ball $B(x,R)$ for the process $(X_t,t\ge 0)$.

 Assume  now {\rm \textbf{[c1]}}. 
 For all $x\in \mathscr R^d$, define $\mathbf L(x)= |x| \times (1-\chi(x))$ where $\chi\in \mathcal C_c^\infty(B(0,1), [0,1])$ and $\chi=1$ on $B(0,1/2)$, and 
\begin{equation}\label{eq.W}
\mathbf W(x)=e^{\epsilon \mathbf L(x)}, \ \epsilon >0.
\end{equation} 
 Denote by  $\mathscr L^{oL}= \mathbf b_{\mathbf c} \cdot \nabla + \frac 12 \Delta$
  the infinitesimal generator of the diffusion \eqref{eq.Lsur}. 
We  have for every  $x\in \mathscr R^d$, 
 \begin{align}
 \nonumber
 \mathscr L^{oL} \mathbf W(x)&= \Big[   \epsilon\mathbf b_{\mathbf c}(x)\cdot  \nabla \mathbf L(x) +\frac 12\epsilon^2| \nabla \mathbf L(x)|^2 +\frac 12 \epsilon \Delta  \mathbf L(x)\Big] \, \mathbf W(x).
 \end{align}
 Since $\nabla \mathbf L$ and $\Delta \mathbf L$ are bounded over $\mathscr R^d$, we have  $\mathscr L^{oL} \mathbf W(x)/\mathbf W(x)\to -\infty$ thanks to  {\rm \textbf{[c1]}}. In particular $\mathscr L^{oL} \mathbf W\le  c \mathbf W$ over $\mathscr R^d$. Together with the fact that  $\mathbf b_{\mathbf c}$ is locally Lipschitz and using well known arguments, this implies the first statement in Proposition~\ref{pr.Pre} and also that for some $c>0$ and all $x\in \mathscr W_R$, 
\begin{equation}\label{eq.energy}
\mathbb P_x[\sigma^{oL}_{\mathscr W_R}\le t]\le \frac{e^{ct}}{R} \mathbf W(x), 
\end{equation}
where for $R>0$, $\mathscr W_R:= \{y\in \mathscr R^d, \mathbf W(y)< R\}$ is an open bounded subset of $\mathscr R^d$ and $\sigma^{oL}_{\mathscr W_R}:=\inf\{t\ge 0, X_t\notin \mathscr W_R\}$. Using the fact  that for all $x\in \mathscr R^d$, a.s. both $\sigma^{oL}_{\mathscr W_R}(x)\nearrow^{+\infty}$ and $\sigma^0_{\mathscr W_R}\nearrow^{+\infty}$ (where $\sigma^0_{\mathscr W_R}:=\inf\{t\ge 0, B_t\notin \mathscr W_R\}$) as $R\to +\infty$, one proves the Girsanov formula  with the same arguments as those used in the proof of~\cite[Proposition 2.2]{cattiaux09} (see also the proof of~\cite[Lemma 1.1]{Wu2001}).    
 \end{proof}
  %%%
  
  % The Markov property follows from   the (pathwise) uniqueness by standard considerations (cf. Cepa) 
%The strong Markov property follows e.g. from the fact that the semigroup has the Feller property (cf. the proof of~\cite[Theorem 6.17]{Legall Brownian motion}, E is assumed to be metrizable locally compact and countable at infinity).  
%  Cf. aussi E. Cepa, Ergodicité des équations stochastiques variationnelles (pas besoin de la continuité en t, cadlag suffit
% Sinon Kallenberg: unicité de la solution faible (car unicité forte) THM 18.11. [C'est pareil avec un Lévy]

Define the space 
\begin{equation}\label{eq.RD0}
\mathscr R^d_0:=\mathscr R^d\setminus \{0\}.
\end{equation}
The space  $\mathscr R^d_0$ is Polish (it is equipped with   a metric  generating the original topology and making $\mathscr R^d_0$ a complete space\footnote{In our setting, it is the metric  
\begin{equation}\label{eq.DD}
\mathsf d_{\mathscr R^d_0}: (x,y)\in (\mathscr R^d_0)^2\mapsto |x-y|+||x|^{-1}-|y|^{-1}|
\end{equation}
 which makes the boundary of $\mathscr R^d_0$ (namely $\{0\}$) looks like $+\infty$. Note that for $x\in \mathscr R^d_0$ and a sequence $(x_n)\subset  \mathscr R^d_0$,  $x_n\to x$ in $\mathscr R^d_0$ if and only if $x_n\to x$ in the original space $\mathscr R^d$, i.e. in $\mathscr R^d_0$ the topology 
 %,  see~\cite[Definition 1.1.10 and its note]{bogachev2007measure}) 
 induced by $\mathsf d_{\mathscr R^d_0}$ coincides with the one of $\mathscr R^d$.}).

 \begin{lem}\label{le.finiteLsur}
 Assume  {\rm \textbf{[c1]}} or  {\rm \textbf{[c2]}}. 
Then, for all $x\in \mathscr R^d_0$ and $T>0$,  $\mathbb P_x[ X_{[0,T]} \in \mathcal C([0,T],\mathscr R^d_0)]=1$, and,  if   {\rm\textbf{[S1]}} or {\rm\textbf{[S2]}} holds,    for all $t\ge 0$, $ \int_0^t  \mathbf V_{\mathbf S}(X_s(x))ds$ is  a.s.  finite. 
\end{lem}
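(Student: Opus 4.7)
The plan is to split the proof into two steps and to exploit the key fact that, in dimension $d\ge 2$, singletons are polar for Brownian motion.

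For the first claim, I would apply Girsanov's formula \eqref{eq.Girsanov}: the true martingale property of $m_T^0(x)$ ensures $\mathbf P_x \ll \mathbf P_x^0$ on $\mathcal F_T$, so it suffices to show that $A_T := \{\exists s \in [0,T],\ B_s(x) = 0\}$ satisfies $\mathbf P_x^0(A_T) = 0$. This is the classical polarity of a single point for Brownian motion in $\mathscr R^d$ with $d\ge 2$. Hence $\mathbb P_x(A_T) = 0$, which, together with the a.s.\ continuity of sample paths of $X$ stated in Proposition~\ref{pr.Pre} and the fact (from the footnote) that the topology induced on $\mathscr R^d_0$ by $\mathsf d_{\mathscr R^d_0}$ coincides with that of $\mathscr R^d$, yields $\mathbb P_x[X_{[0,T]} \in \mathcal C([0,T], \mathscr R^d_0)] = 1$.

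For the second claim, I would combine the first claim with non-explosion. Under \textbf{[c2]}, the bound preceding \eqref{eq.energy2} gives $\sup_{s \in [0,t]} |X_s(x)| < +\infty$ a.s.; under \textbf{[c1]}, the same boundedness follows from \eqref{eq.energy} since $\sigma^{oL}_{\mathscr W_R}\nearrow +\infty$ a.s.\ as $R\to +\infty$ (so eventually $\mathscr W_R$ contains the whole trajectory on $[0,t]$). By the first claim and continuity of paths on the compact interval $[0,t]$, we also have $\inf_{s \in [0,t]} |X_s(x)| > 0$ a.s. Consequently, for a.e.\ $\omega$, the range $\{X_s(x)(\omega) : s\in [0,t]\}$ is a compact subset of $\mathscr R^d_0$; by continuity of $\mathbf V_{\mathbf S}$ there, the function $s\mapsto \mathbf V_{\mathbf S}(X_s(x)(\omega))$ is bounded, and hence $\int_0^t \mathbf V_{\mathbf S}(X_s(x))\,ds$ is a.s.\ finite. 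Note that no quantitative control of the singularity of $\mathbf V_{\mathbf S}$ at $0$ (or at infinity, under \textbf{[S2]}) is required, precisely because the finiteness is pathwise.

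The main obstacle is conceptual rather than technical: transferring polarity of the origin from Brownian motion to the diffusion $X$, which demands global absolute continuity on $\mathcal F_T$. This is why the true martingale property in Proposition~\ref{pr.Pre} is essential; once available, the Girsanov argument is immediate, and the second claim reduces to the pathwise observation that $\mathbf V_{\mathbf S}$ is bounded on compact subsets of $\mathscr R^d_0$.
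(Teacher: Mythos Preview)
Your proof is correct and follows essentially the same approach as the paper: polarity of singletons for Brownian motion in dimension $d\ge 2$, transferred to $X$ via the Girsanov absolute continuity of Proposition~\ref{pr.Pre}. The paper's proof actually stops after establishing the first claim and writes ``This ends the proof of the Lemma,'' leaving the integral finiteness as an immediate (unwritten) consequence; your explicit argument that the range $\{X_s(x):s\in[0,t]\}$ is a compact subset of $\mathscr R^d_0$ on which $\mathbf V_{\mathbf S}$ is bounded is exactly the missing step, and your separate invocation of non-explosion under \textbf{[c1]} and \textbf{[c2]} is slightly more than needed (it is already contained in the global existence statement of Proposition~\ref{pr.Pre}) but not incorrect.
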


  \begin{proof}
Since $d\ge 2$, it is well known, see e.g.~\cite[Theorem 4.1 in Section 11]{friedman1976} that any point  is   \textit{nonattainable}   for a  $\mathscr R^d$-standard Brownian motion $(W_s,s\ge 0)$  and therefore  it holds for all $y\in \mathscr R^d$, $\mathbb P\big [ W_s=y, \text{ for some } s>0 \big ]=0$. 
Thanks to the Girsanov formula \eqref{eq.Girsanov}, we have for all $T>0$, 
 $
\mathbb P\big [ |X_s(x)|=0, \text{ for some } s\in [0,T] \big ]=0, \forall x\in \mathscr R^d_0
 $.
Thus,  $\mathbb P_{ x}[\forall s\ge 0, X_s\in \mathscr R^d_0]=1$, $\forall x\in \mathscr R^d_0$. By continuity of the trajectories of $(X_s(x),s\ge 0)$ in $\mathscr R^{d}$, for all $x\in \mathscr R^d_0$, we deduce that 
\begin{align}
 \label{eq.Cc}
 &\mathbb P_x[ X_{[0,t]} \in \mathcal C([0,t],\mathscr R^d_0)] =1.
 \end{align} 
 This ends the proof of the Lemma. 
 \end{proof}

 %%%%

  In view of Lemma \ref{le.finiteLsur}, $\mathscr R^d_0$ is  the natural state  space  to study  the Feynman-Kac semigroup   associated  with $\mathbf V_{\mathbf S}$ for the process \eqref{eq.Lsur} (see also Remark \ref{re.D}).
In all this work, $\mathscr O$ is a subdomain of  $\mathscr R^d_0$, and
$$\sigma^{oL}_{\mathscr O}:=\inf\{t\ge 0, X_t\notin \mathscr O\}$$
is the first exit time from $\mathscr O$ for the process $(X_t,t\ge 0)$. 
The  killed (outside $\mathscr O$)  Feynman-Kac semigroup $(Q^{oL}_t, t\ge 0)$  over    $\mathscr R^d_0$ associated  with $\mathbf V_{\mathbf S}$ and  the process \eqref{eq.Lsur}    is then defined by   
\begin{equation}\label{eq.FK-sur1}
 Q^{oL}_tf(x)=\mathbb E_x\Big[f(X_t)   \, e^{  - \int_0^t  \mathbf V_{\mathbf S}(X_s)ds} \mathbf 1_{t<\sigma^{oL}_{\mathscr O}}  \Big], \ \text{$t\ge 0$, $x\in \mathscr O$, and $f\in b\mathcal B(\mathscr O)$}.
 \end{equation} 
 Note that  the confinement at $+\infty$   comes from   the dynamics itself (i.e. from $\mathbf b_{\mathbf c}$) or from  the Schrödinger potential $\mathbf V_{\mathbf S}$. The confinement at $0$ comes from   $\mathbf V_{\mathbf S}$. 
 Note also that there is no killing when  $\mathscr O=\mathscr R^d$. 
 The superscript oL in the previous notation stands for the fact that we consider   the overdamped Langevin process \eqref{eq.Lsur}.

For $t\ge 0$, the transition kernel $Q^{oL}_t(x,dy)$ at time $t\ge 0$ defines, through a natural pairing,  an (adjoint) operator on  the set $\mathcal M_b(\mathscr O)$ of all $\sigma$-additive  measures $\nu$ of bounded variations over $\mathscr O$:
$$\nu  Q_t^{oL} (\mathcal A)=\int_{\mathscr R^d_0} Q_t^{oL}(x, \mathcal A)\,  \nu(dx)= \mathbb E_{\nu}\Big[\mathbf 1_{\mathcal  A}(X_t)   \, e^{  - \int_0^t  \mathbf V_{\mathbf S}(X_s)ds}  \mathbf 1_{t<\sigma^{oL}_{\mathscr O}}  \Big], \ \mathcal A \in \mathcal B(\mathscr O).$$
%Note that in view of Lemma \ref{le.finiteLsur}, for $t\ge 0$ and such measures $\nu$ over $\mathscr O$, $\mathbb E_{\nu} [ e^{  - \int_0^t  \mathbf V_{\mathbf S}(X_s)ds}   ]>0$. 
The killed renormalized Feynman–Kac semigroup is defined by:
\begin{equation}\label{eq.FK-sur2}
\nu  P_t^{oL} (\mathcal A)=\frac{\nu Q_t^{oL} (\mathcal A)}{\mathscr \nu Q_t^{oL}(\mathscr O)}= \frac{ \mathbb E_{\nu}\Big[\mathbf 1_{\mathcal  A}(X_t)   \, e^{  - \int_0^t  \mathbf V_{\mathbf S}(X_s)ds} \mathbf 1_{t<\sigma^{oL}_{\mathscr O}}   \Big]  }{ \mathbb E_{\nu}\Big[ e^{  - \int_0^t  \mathbf V_{\mathbf S}(X_s)ds} \mathbf 1_{t<\sigma^{oL}_{\mathscr O}}   \Big]  }.
\end{equation} 
The main result for this model is Theorem \ref{th.oL}. 

\begin{remarks}\label{re.D} 
The first thing to establish was obviously the space to work in. 

 Lemma \ref{le.finiteLsur} suggests indeed to work outside the singularity of $\mathbf V_{\mathbf S}$, i.e. on $\mathscr R^d_0$. Anticipating a little bit, another reason to work on $\mathscr R^d_0$  is the following. Compactness  properties of the non-conservative semigroup $(Q^{oL}_t, t\ge 0)$ is  (with our techniques) obtained by   constructing a Lyapunov function $\mathbf W$ over the chosen space $\mathscr S$ such that  $(\mathscr L^{oL}-\mathbf V_{\mathbf S}) \mathbf W(x)/\mathbf W(x)\to -\infty$  when $x\to \{\infty\}\cup\{\partial \mathscr S\}$ ($\mathscr L^{oL}-\mathbf V_{\mathbf S}$ being the infinitesimal generator of the   Feynman-Kac semigroup). 
 This definitely  suggests     to work outside the singularity of $\mathbf V_{\mathbf S}$  since we have for free that $\mathbf V_{\mathbf S}(x)\to+\infty$ as $|x|\to 0^+$.  
 Apart from that, note that there exist a large class of singular potentials\footnote{E.g. $|x|^{-b}$, $b>2$. Use indeed the L.I.L $\mathbb P[\limsup_{s\to 0}|B_s|/(2s \log_2(1/s))^{1/2}=1]=1$, see e.g.~\cite[Chapter~II]{revuz2013continuous}.} $\mathbf V_{\mathbf S}$ such that  $\mathbb P_0[ \int_0^t  \mathbf V_{\mathbf S}(B_s)ds=+\infty]=1$ and in this case the associated killed Feynman-Kac semigroup of the Brownian motion will not be topologically irreducible over $\mathscr R^d$. 
 %\footnote{E.g. when $\mathbf V_{\mathbf C}=0$, the law of the iterated logarithm  ($\mathbb P[\limsup_{s\to 0}|B_s|/(2s \log_2(1/s))^{1/2}=1]=1$, see e.g.~\cite[Chapter~II]{revuz2013continuous}) implies that if $b>2$, $\mathbb P[ \int_0^t |B_s|^{-b}ds=+\infty]=1$ for all $t> 0$.}.

 Let us mention that working with Schrödinger potentials in the Kato class\footnote{This is the class of functions $\mathbf V_{\mathbf S}\in L^1_{loc}(\mathscr R^d)$ such that (see e.g.~\cite{carmona1990relativistic}),
\begin{equation}\label{eq.Kato}
 \lim_{t\to 0^+}\sup_{x\in \mathscr R^d}\mathbb E_x\Big [\int_0^t  |\mathbf V_{\mathbf S}|(X_s)ds\Big]=0.
\end{equation}
}
 allows, with different techniques, to work in a state space including the singularity, see e.g.~\cite[Section 3.2]{chung2001brownian} and~\cite{carmona1990relativistic,kaleta2015pointwise} where the spectral analysis of some $L^2(\mathscr R^d)$-symmetric Feynman-Kac semigroups is carried out  in $L^p(\mathscr R^d)$ when the Schrödinger potential belongs to the (local) Kato class (which however allows some singular attractive potentials). Note that the Kato class depends on the underlying process and does not include all kinds of singularities as it is a subset of $L^1_{loc}(\mathscr R^d)$. 
 %(a subset of locally absolutely integrable functions)
 %Apart from that, note that the computation of $\mathbb P[ \int_0^t  \mathbf V_{\mathbf S}(X_s(0))ds<+\infty]$, for $t>0$, is   a tedious task and  many situations can    {\rm a priori} occur 
 %Note also that with our techniques,  to work on $\mathscr R^d$, a necessary (but    not sufficient) condition would be to have  $\mathbb P_0[ \int_0^t  \mathbf V_{\mathbf S}(X_s(0))ds<+\infty]=1$ for all $t> 0$
 %\footnote{E.g. when $\mathbf V_{\mathbf C}=0$, the law of the iterated logarithm  ($\mathbb P[\limsup_{s\to 0}|B_s|/(2s \log_2(1/s))^{1/2}=1]=1$, see e.g.~\cite[Chapter~II]{revuz2013continuous}) implies that if $b>2$, $\mathbb P[ \int_0^t |B_s|^{-b}ds=+\infty]=1$ for all $t> 0$.}.
\end{remarks}

\subsubsection{Second model: the case of Lévy processes}
\label{sec.Lee}
The second model we consider is the killed Feynman-Kac semigroup of   Lévy processes with a singular Schrödinger potential.
\medskip

\noindent
\textbf{Model 2.}  
Let   $(L_s,s\ge 0)$ be Lévy process  over $\mathscr R^d$ and   
$$\sigma^{Le}_{\mathscr O}:=\inf\{t\ge 0, X_t\notin \mathscr O\},$$
where we recall $\mathscr O$ is any subdomain of $\mathscr R^d_0$ (see \eqref{eq.RD0}). 
We make the following assumptions over the Lévy process $(L_s,s\ge 0)$:
\begin{enumerate}
\item[] \textbf{[L1]}  For all $t>0$, $L_t$ admits a density   w.r.t. the Lebesgue measure over $\mathscr R^d$.
\item[] \textbf{[L2]}  The   killed semigroup $(L_t,t\in [0, \sigma^{Le}_{\mathscr O}) )$     is topologically irreducible  over $\mathscr O$, i.e.  for all $t>0$,   all $x\in \mathscr O$,  and all non-empty open subset $O$ of $\mathscr O$,  $\mathbb P_x(L_t\in O, t<\sigma^{Le}_{\mathscr O})>0$. 
\item[] \textbf{[L3]}  If $\mathscr R^d_0\setminus \overline{\mathscr O}$ is nonempty, then $\mathbb P_x[\sigma^{Le}_{\mathscr O}<+\infty]>0$ for some $x\in \mathscr O$. 
\end{enumerate}
We give in the appendix below a  non exhaustive list of   important examples of  Lévy processes over $\mathscr R^d$ ($d\ge 2$) satisfy \textbf{[L1]}, and also  \textbf{[L2]}-\textbf{[L3]} for any subdomain $\mathscr O$ of $\mathscr R^d$.  
For $x\in \mathscr R^d$, we will denote by $(L_s(x),s\ge 0)$ the process $(x+L_s,s\ge 0)$.  
   We start with the following lemma.

%%%
 
\begin{lem}\label{le.finitealpha-s}
Assume {\rm \textbf{[L1]}}. 
Then, for all $z\in \mathscr R^d_0$,  
\begin{equation}\label{eq.No0-alpha}
\mathbb P_{z}[\forall t\ge 0, L_t=0 \text{ or } L_{t^-}=0]=0.
\end{equation}
In addition, for all $x\in \mathscr R^d_0$ and $t\ge 0$,  $\mathbb P_x[ L_{[0,t]} \in \mathcal D([0,t],\mathscr R^d_0)]=1$, and,  if   {\rm\textbf{[S1]}} or {\rm\textbf{[S2]}} holds,    $ \int_0^t  \mathbf V_{\mathbf S}(L_s(x))ds$ is  a.s.  finite.
\end{lem}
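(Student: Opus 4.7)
The plan is to prove the three assertions sequentially: \eqref{eq.No0-alpha} is the L\'evy analogue of the non-attainability fact used in the proof of Lemma~\ref{le.finiteLsur} (I read the event as $\{\exists t \ge 0 : L_t = 0 \text{ or } L_{t^-} = 0\}$, the form dictated by how the lemma is used below); once this polarity-type statement is in hand, the path-space membership and the integrability of $\mathbf V_{\mathbf S}$ along the trajectory follow as essentially deterministic corollaries.

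For \eqref{eq.No0-alpha}, the starting observation is that \textbf{[L1]} yields $\mathbb P_z(L_t = 0) = 0$ for every fixed $t > 0$, since $L_t$ has a density with respect to Lebesgue measure on $\mathscr R^d$ and $\{-z\}$ is Lebesgue-null. Fubini then gives $\mathbb P_z$-a.s.\ that the Lebesgue measure of $\{t \ge 0 : L_t = 0\}$ vanishes, but this does not yet rule out exceptional hitting instants. To upgrade to true polarity I would invoke the classical potential-theoretic result that a L\'evy process on $\mathscr R^d$ with $d \ge 2$ and absolutely continuous one-dimensional marginals makes every singleton polar (this falls in the Kesten--Bretagnolle--Hawkes circle of results; see e.g.\ Bertoin's monograph on L\'evy processes, Chapter~II), which gives $\mathbb P_z(T_{\{0\}} < \infty) = 0$ for $T_{\{0\}} = \inf\{t > 0 : L_t = 0\}$. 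The left-limit case $L_{t^-} = 0$ is treated by the same argument applied to the stopping time $T'_{\{0\}} = \inf\{t > 0 : L_{t^-} = 0\}$, combined with the strong Markov property and \textbf{[L1]} for the post-jump process.

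For the second assertion, on the full-measure event of \eqref{eq.No0-alpha} both $L_s(x)$ and $L_{s^-}(x)$ lie in $\mathscr R^d_0$ for every $s \in [0,t]$. Since the metric $\mathsf d_{\mathscr R^d_0}$ from \eqref{eq.DD} generates the same topology on $\mathscr R^d_0$ as the Euclidean one, right-continuity and the existence of left limits are preserved on passing to $\mathscr R^d_0$, yielding $L_{[0,t]} \in \mathcal D([0,t], \mathscr R^d_0)$ a.s.

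For the third assertion, a c\`adl\`ag path on a compact interval has bounded image in $\mathscr R^d$, and its closure in $\mathscr R^d$ coincides with $\{L_s(x): s \in [0,t]\} \cup \{L_{s^-}(x): s \in (0,t]\}$; by \eqref{eq.No0-alpha} this closure avoids $0$, so it is a compact subset of $\mathscr R^d_0$. Under either \textbf{[S1]} or \textbf{[S2]}, $\mathbf V_{\mathbf S}$ is continuous on $\mathscr R^d_0$ and hence bounded on this compact image, giving $\int_0^t \mathbf V_{\mathbf S}(L_s(x))\,ds < \infty$ a.s. The main difficulty of the whole proof lies entirely in the polarity step: \textbf{[L1]} controls only fixed-time marginals, so a Fubini argument eliminates hitting only on a Lebesgue-null set of times, and the upgrade to full polarity of $\{0\}$ genuinely requires L\'evy-process potential theory (via the resolvent / capacity associated with $\{0\}$) or a dedicated strong-Markov argument exploiting continuity of hitting distributions under \textbf{[L1]}.
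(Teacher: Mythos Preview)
Your proposal is correct and follows essentially the same route as the paper: the polarity of $\{0\}$ for $(L_t)$ under \textbf{[L1]} and $d\ge 2$ is obtained by appeal to Kesten's theorem (the paper cites \cite[Theorem~3]{kesten} directly, which already covers both $L_t=0$ and $L_{t^-}=0$, so your separate strong-Markov treatment of the left-limit case is unnecessary though not wrong in spirit). For the remaining two assertions the paper argues, equivalently to your compact-image argument, that a.s.\ $\inf_{s\in[0,T]}|L_s(x)|>0$ via a subsequence/contradiction argument using the c\`adl\`ag property, which is exactly your statement that the closure $\{L_s\}\cup\{L_{s^-}\}$ is a compact subset of $\mathscr R^d_0$.
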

%%%

\begin{proof}
%We recall that by~\cite[Theorem 1]{knopova2013note}, 
%if the characteristic exponent  $\Psi$ of a Lévy process without Gaussian exponent satisfies 
%\begin{equation}
%\label{eq.cond-densite}
%\lim_{|\xi|\to +\infty}\frac{\Re (\Psi(\xi))}{\ln(1+|\xi|)}=+\infty,
%\end{equation}
%it  admits a (smooth) density w.r.t. the Lebesgue measure. 
Since for all $s>0$, $L_s$ has a density w.r.t. the Lebesgue measure,
%  (and more precisely,  $\mathbb E_z[f(L_s)]=\int p_s(z-y)f(y)dy$, for every $z\in \mathscr R^d$ and $f\in b\mathcal B(\mathscr R^d$)). 
~\eqref{eq.No0-alpha}  is   a consequence of~\cite[Theorem 3]{kesten} (recall that we assume $d\ge 2$).  Moreover,~\eqref{eq.No0-alpha} implies that for all $z\in \mathscr R^d_0$ and $T>0$,   
\begin{equation}\label{eq.Ty}
\text{there exists a.s. $\epsilon>0$ such that $|L_t(z)|\ge \epsilon$ for all $t\in [0,T]$.}
\end{equation}
 Indeed, assume that there exist two sequences $\epsilon_n\to 0^+$ and $t_n\in [0,T]$ such that $|L_{t_n}(z)|\le \epsilon_n$. Up to extracting a subsequence,  we assume that $t_n\to t_*\in [0,T]$. 
 One of the two sets $S^*_-=\{n\ge 1, t_n<t_*\}$ and $S^*_+=\{n\ge 1, t_*\le t_n\}$  has an infinite cardinality.   Since the trajectories of $(L_s(x),s\ge 0)$ are  right-continuous and have left limits everywhere, if $S^*_-$ has an infinite cardinality, then, $L_{t_*^-}(z)=0$  whereas if $S^*_+$ has an infinite cardinality, $L_{t_*^-}(z)=0$, but this occurs with null probability. This proves \eqref{eq.Ty}. The proof of the lemma is complete. 
\end{proof}

%%%

%\begin{remarks}\label{re.Att}
%In view of~\cite[Theorems 1, 2, and 3]{kesten} more general Lévy processes can be considered as soon as: lemma \ref{le.finitealpha-s} is still valid and the semigroup is strong Feller.    In this work, for sake of simplicity, we stick to  a $\alpha$-stable processe since it is   a  sufficiently rich example. 
%\end{remarks}

%Consider over $\mathscr R^d$ a potential $\mathbf U_{\mathbf S} = \mathbf V_{\infty} + \mathbf V_{\mathbf S}$ where $\mathbf V_{\mathbf S}$ is given by \eqref{eq.HS} and    
%\begin{equation}\label{eq.VCC}
%\text{$\mathbf V_{\infty}:\mathscr R^d\to \mathscr R$ is a continuous coercive potential.}
%\end{equation}
%By coercive we mean here: $\mathbf V_{\infty}(x)\to +\infty$ when $|x|\to +\infty$. Note that $-\mathbf U_{\mathbf S}$ is upper bounded over $\mathscr R^d_0$. 
The killed Feynman-Kac semigroup $(Q^{Le}_t, t\ge 0)$ over the   space  $\mathscr O$ associated  with the potential $ \mathbf V_{\mathbf S}$  and the  process    $(L_s,s\ge 0)$      is defined by 
\begin{equation}\label{eq.FK-B1}
Q_t^{Le}f(x)=\mathbb E_x\Big[f(L_t)   \, e^{  - \int_0^t  \mathbf V_{\mathbf S} (L_s)ds}  \mathbf 1_{t<\sigma^{Le}_{\mathscr O}}   \Big], \ \text{$t\ge 0$, $x\in \mathscr O$, and $f\in b\mathcal B(\mathscr O)$}.
 \end{equation}
%Note that here, both the confinement at $+\infty$  and at $0$ comes from the   Schrödinger potential $\mathbf U_{\mathbf S}$. 
Note also that there is no killing when $\mathscr O=\mathscr R^d_0$ since in this case, due to Lemma \ref{le.finitealpha-s}, for all $x\in \mathscr R^d_0$, a.s. $\inf\{t\ge 0, X_t\notin \mathscr R^d_0\}= +\infty$.  
 The associated killed renormalized Feynman–Kac semigroup is then defined by, for $\nu\in \mathcal M_b(\mathscr O)$ and $\mathcal A \in \mathcal B(\mathscr O)$, $ \nu  P^{Le}_t (\mathcal A)= {\nu Q^{Le}_t (\mathcal A)}/{\mathscr \nu Q^{Le}_t(\mathscr O)}$. The main result for this killed semigroup  is Theorem \ref{th.L}.

%%%%%
%%%%%

%%%%%
%%%%%
%%%%%

\subsubsection{The third model: a prototypical  hypoelliptic process} 
The third model we consider is the Feynman-Kac semigroup associated with  the kinetic Langevin process \eqref{eq.Lcin} and with the singular Schrödinger potential $\mathbf V_{\mathbf S}$.
\medskip

\noindent
\textbf{Model 3.}  Let  $\gamma>0$ and $(Y_t=(x_t,v_t),t\ge 0)$ be the solution to the stochastic differential equation in $\mathscr R^d\times \mathscr R^d$:
\begin{equation}\label{eq.Lcin}
dx_t=v_tdt, \ dv_t=-\nabla \mathbf V_{\mathbf c}(x_t)dt-\gamma v_tdt + dB_t,
\end{equation} 
 where $\mathbf V_{\mathbf c}:\mathscr R^d\to [1,+\infty)$ is   a  differentiable function such that   $\nabla \mathbf V_{\mathbf c}$ is locally Lipschitz. 
The kinetic Langevin process (also  called the underdamped Langevin process) is a prototypical kinetic   diffusion which is widely used in statistical physics and in molecular dynamics~\cite{lelievre2016partial}. The long time behavior of such a process as well as its ergodic properties are now well known, see for instance~\cite{talay1,Wu2001,mattingly2002ergodicity,herau2004isotropic,herzog,lu2019geometric,Stoltz-2}. In all this work, we write $\mathsf y=(x,v)$ for  an element in $\mathscr R^d\times \mathscr R^d$. The Hamiltonian of the process~\eqref{eq.Lcin} is, for  $\mathsf y=(x,v)\in \mathscr R^{2d}$,  $\mathbf H(x,v)=\mathbf V_{\mathbf c}(x)+\frac12|v|^2$. 
We introduce the process  $(Y^0_t=(x^0_t,v^0_t),t\ge 0)$   solution to the stochastic differential equation in $\mathscr R^d\times \mathscr R^d$:
\begin{equation}\label{eq.Lcin0}
dx^0_t=v^0_tdt, \ dv^0_t=  dB_t.
\end{equation} 
Since $\mathbf V_{\mathbf c}$ is lower bounded,  we have the following result from~\cite[Lemma 1.1]{Wu2001}. 
%%%

\begin{prop}\label{pr.PreKin} 
Then, for all $\mathsf y\in \mathscr R^{2d}$,   there exists a unique strong solution $(Y_t(\mathsf y),t\ge 0)$ to \eqref{eq.Lcin} over $\mathscr R^{2d}$ whose sample paths are a.s. continuous.  
Finally, for all $T>0$,   the following  Girsanov's formula holds for all $T>0$,  
\begin{equation}\label{eq.Girsanov2}
\frac{d \mathbf P_{\mathsf y}}{d\mathbf P_{\mathsf y}^0}{  \Big|_{\mathcal F_T}}= \mathbf  m^0_T(\mathsf y)  ,
\end{equation}
where $ \mathbf P_{\mathsf y}$ (resp. $ \mathbf P^0_{\mathsf y}$)  is the law of  $(Y_t(\mathsf y),t\ge 0)$ (resp. of $(Y_t^0(\mathsf y),t\ge 0)$, see \eqref{eq.Lcin0}), and $(\mathbf m_t^0,t\ge 0)$ is the   Doléans-Dade exponential (true) martingale defined by $\mathbf m_t^0=\exp\big[  - \int_0^t   \big(\gamma v_s^0+\nabla \mathbf V_{\mathbf c}(x_s^0)\big)dB_s -\frac 12 \int_0^t  \big\vert   \gamma v_s^0+\!\nabla \mathbf V_{\mathbf c}(x_s^0)\big\vert^2 ds   \big]$.
 \end{prop}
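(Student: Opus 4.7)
I would treat the three claims---strong existence/uniqueness, non-explosion, and the Girsanov identity---in that order, using throughout the Hamiltonian $\mathbf H(x,v)=\mathbf V_{\mathbf c}(x)+\tfrac12|v|^2$ as a Lyapunov function. Since $\nabla \mathbf V_{\mathbf c}$ is locally Lipschitz, the drift $b(\mathsf y)=(v,-\nabla \mathbf V_{\mathbf c}(x)-\gamma v)$ is locally Lipschitz on $\mathscr R^{2d}$; classical Cauchy--Lipschitz theory (e.g.\ \cite[Theorem~3.1 in Section~5]{friedman1975}) therefore produces a unique pathwise strong solution up to an explosion time $\tau_\infty(\mathsf y)$, with a.s.\ continuous paths.

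For non-explosion, let $\mathscr L$ denote the generator of \eqref{eq.Lcin}, i.e.\ $\mathscr L=v\cdot\nabla_x-(\nabla \mathbf V_{\mathbf c}(x)+\gamma v)\cdot\nabla_v+\tfrac12\Delta_v$. A direct computation on $\mathbf H$ cancels the conservative cross-term $v\cdot\nabla \mathbf V_{\mathbf c}(x)$ and yields $\mathscr L\mathbf H=-\gamma|v|^2+\tfrac d2\le\tfrac d2$. Setting $\tau_n:=\inf\{t\ge 0:\mathbf H(Y_t)\ge n\}$, It\^o's formula applied to $\mathbf H(Y_{t\wedge\tau_n})$ and taking expectations give $\mathbb E_{\mathsf y}[\mathbf H(Y_{t\wedge\tau_n})]\le \mathbf H(\mathsf y)+\tfrac d2 t$; the left-hand side is at least $n\,\mathbf P_{\mathsf y}(\tau_n\le t)$, so $\mathbf P_{\mathsf y}(\tau_n\le t)\to 0$ as $n\to+\infty$, whence $\tau_n\nearrow+\infty$ $\mathbf P_{\mathsf y}$-almost surely and $\tau_\infty(\mathsf y)=+\infty$ a.s.

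For the Girsanov identity \eqref{eq.Girsanov2}, the real obstacle is that $|\gamma v+\nabla \mathbf V_{\mathbf c}(x)|$ need not be globally bounded (and $\nabla \mathbf V_{\mathbf c}$ is only locally Lipschitz), so Novikov's condition is not directly available for $\mathbf m^0_T$. My plan is to localise on the \emph{reference} side. Under $\mathbf P^0_{\mathsf y}$ the process $Y^0$ is trivially non-explosive, so $\tau_n^0:=\inf\{t\ge 0:\mathbf H(Y_t^0)\ge n\}$ satisfies $\tau_n^0\nearrow+\infty$ $\mathbf P^0_{\mathsf y}$-a.s. On $[0,\tau_n^0]$ the integrand $\gamma v_s^0+\nabla \mathbf V_{\mathbf c}(x_s^0)$ is bounded, so Novikov applies and the stopped exponential $\mathbf m^0_{\cdot\wedge\tau_n^0}$ is a genuine $\mathbf P^0_{\mathsf y}$-martingale. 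Setting $d\mathbf P^n_{\mathsf y}:=\mathbf m^0_{T\wedge\tau_n^0}\,d\mathbf P^0_{\mathsf y}$ on $\mathcal F_{T\wedge\tau_n^0}$, the usual Girsanov theorem together with pathwise uniqueness identifies the law of $Y^0_{\cdot\wedge\tau_n^0}$ under $\mathbf P^n_{\mathsf y}$ with that of $Y_{\cdot\wedge\tau_n}$ under $\mathbf P_{\mathsf y}$.

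The hard step is removing the localisation, i.e.\ proving $\mathbb E^{\mathbf P^0_{\mathsf y}}[\mathbf m^0_T]=1$. Following the strategy of \cite[Proposition~2.2]{cattiaux09} and \cite[Lemma~1.1]{Wu2001}, I would write $1=\mathbb E^{\mathbf P^0_{\mathsf y}}[\mathbf m^0_{T\wedge\tau_n^0}]=\mathbb E^{\mathbf P^0_{\mathsf y}}[\mathbf m^0_T\mathbf 1_{\tau_n^0>T}]+\mathbb E^{\mathbf P^0_{\mathsf y}}[\mathbf m^0_{\tau_n^0}\mathbf 1_{\tau_n^0\le T}]$; the first term tends to $\mathbb E^{\mathbf P^0_{\mathsf y}}[\mathbf m^0_T]$ by monotone convergence, while the second equals $\mathbf P^n_{\mathsf y}[\tau_n^0\le T]=\mathbf P_{\mathsf y}[\tau_n\le T]$ by the preceding identification of laws, which tends to $0$ as $n\to+\infty$ by the non-explosion of $Y$ established in step two. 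Passing to the limit gives $\mathbb E^{\mathbf P^0_{\mathsf y}}[\mathbf m^0_T]=1$, so $(\mathbf m^0_t)_{t\ge 0}$ is a true martingale and \eqref{eq.Girsanov2} holds on $\mathcal F_T$ for every $T>0$.
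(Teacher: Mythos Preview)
Your proof is correct and is essentially the argument of \cite[Lemma~1.1]{Wu2001}, which is exactly what the paper invokes: the paper does not give an independent proof of Proposition~\ref{pr.PreKin} but simply cites that lemma, relying on the fact that $\mathbf V_{\mathbf c}$ is lower bounded (so that $\mathbf H$ works as a Lyapunov function). Your write-up spells out the three steps---local existence, non-explosion via $\mathscr L\mathbf H\le d/2$, and the localisation/identification argument for $\mathbb E[\mathbf m_T^0]=1$---in the same spirit as the cited reference and as the paper does explicitly for the overdamped case in Proposition~\ref{pr.Pre}.
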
 
 In all this work, when we will deal with the process \eqref{eq.Lcin}, 
 the state space we will consider is 
\begin{equation}\label{eq.r2d0}
\mathscr R^{2d}_0:=(\mathscr R^d\setminus \{0\})\times \mathscr R^d= \mathscr R^d_0\times \mathscr R^d\  \text{ (see \eqref{eq.RD0})}.
\end{equation}
Consider a metastable set for the dynamics \eqref{eq.Lcin}, where we recall that such sets are of the form  $\mathscr D=\mathscr O\times \mathscr R^d$ ($\mathscr O$ being a subdomain of $\mathscr R^d_0$), see~\cite{guillinqsd,lelievre2022eyring}. Set  
$$\sigma^{kL}_{\mathscr D}:=\inf\{t\ge0, Y_t\notin  \mathscr D\}=\inf\{t\ge 0, x_t\notin  \mathscr O\}.$$ 
% We will consider the killed (outside $\mathscr D$) Feynman-Kac semigroup over  $\mathscr D$ associated  with $\mathbf V_{\mathbf S}$ for the process $(Y_t(\mathsf y),t\ge 0)$ solution to \eqref{eq.Lcin} (see \eqref{eq.FK-cin1} below). 
 We have the following result.  
 %%%
  \begin{lem}\label{le.finiteLcin}
For all $\mathsf y\in \mathscr R^{2d}_0$,  
\begin{equation}\label{eq.nn}
\mathbb P_{\mathsf y}[\forall t\ge 0, |x_t|>0]=1.
\end{equation}
In particular, for all $\mathsf y\in \mathscr R^{2d}_0$ and $t\ge 0$, $\mathbb P_{\mathsf y}[Y_{[0,t]} \in \mathcal C([0,t], \mathscr R^{2d}_0)]=1$, and,  if   {\rm\textbf{[S1]}} or {\rm\textbf{[S2]}} holds,   $ \int_0^t  \mathbf V_{\mathbf S}(x_s(\mathsf y))ds$ is  a.s.  finite. 
\end{lem}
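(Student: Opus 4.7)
The plan is to first establish the pathwise non-attainability \eqref{eq.nn}; the continuity statement and the a.s.~finiteness of $\int_0^t \mathbf V_{\mathbf S}(x_s)\,ds$ are then straightforward consequences.

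First I would use the Girsanov formula \eqref{eq.Girsanov2} from Proposition~\ref{pr.PreKin}. Since $\mathbf m^0_T$ is a strictly positive true martingale, the laws $\mathbf P_{\mathsf y}$ and $\mathbf P^0_{\mathsf y}$ are mutually absolutely continuous on every $\mathcal F_T$, so null events coincide. It is thus enough to show, for every $T>0$ and every $\mathsf y=(x,v)\in \mathscr R^{2d}_0$, that
\begin{equation*}
\mathbf P^0_{\mathsf y}\big[\exists\, t\in [0,T],\ x^0_t=0\big]=0,
\end{equation*}
where $(Y^0_t=(x^0_t,v^0_t),t\ge 0)$ is the free kinetic process \eqref{eq.Lcin0}.

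The advantage of working under $\mathbf P^0$ is that the dynamics become fully explicit: $v^0_t=v+B_t$ and $x^0_t=x+tv+\int_0^t B_s\,ds$ is a $d$-dimensional Gaussian process with $C^1$ sample paths, whose law at each fixed $t>0$ is the non-degenerate Gaussian $\mathcal N(x+tv,\,(t^3/3) I_d)$. In particular $\mathbf P^0_{\mathsf y}[x^0_t=0]=0$ for each individual $t>0$. The real work, and the main obstacle, is to upgrade this pointwise-in-$t$ statement to an almost sure statement that is uniform over $t\in [0,T]$. This is the step that genuinely uses the hypothesis $d\ge 2$: the forbidden set $\{0\}\times \mathscr R^d$ is a $d$-codimensional submanifold of the $2d$-dimensional state space of the hypoelliptic diffusion $(Y^0_t)$, and the argument proceeds by exploiting the explicit smooth Gaussian transition density of $(Y^0_t)$ together with a Fubini/translation argument in the spatial variable to conclude that a smooth submanifold of codimension $\ge 2$ is polar for $(Y^0_t)$. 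This is in essence the content of \cite[Lemma~1.1]{Wu2001}, whose proof can be reproduced or quoted here.

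Once \eqref{eq.nn} is proven, the two remaining assertions are immediate. The a.s.~continuity of $t\mapsto Y_t$ in $\mathscr R^{2d}$ from Proposition~\ref{pr.PreKin}, combined with the a.s.~positivity of $|x_t|$ for all $t\ge 0$ (which is precisely the statement that the position component never reaches the point at infinity for the metric $\mathsf d_{\mathscr R^d_0}$ of \eqref{eq.DD}), yields $\mathbb P_{\mathsf y}[Y_{[0,t]}\in \mathcal C([0,t],\mathscr R^{2d}_0)]=1$. The a.s.~finiteness of $\int_0^t \mathbf V_{\mathbf S}(x_s)\,ds$ under \textbf{[S1]} or \textbf{[S2]} then follows from the continuity of $\mathbf V_{\mathbf S}$ on $\mathscr R^d\setminus\{0\}$: on the event of full probability that $\{x_s:s\in [0,t]\}$ is a compact subset of $\mathscr R^d_0$, the function $\mathbf V_{\mathbf S}$ is bounded along the trajectory, hence integrable in time over $[0,t]$.
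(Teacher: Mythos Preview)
Your reduction via Girsanov to the free kinetic process $(Y^0_t)$ is correct and matches the paper. You also correctly identify that the heart of the matter is the uniform-in-$t$ statement $\mathbf P^0_{\mathsf y}[\exists\, t\in[0,T],\ x^0_t=0]=0$, and that this genuinely requires $d\ge 2$.

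However, there is a real gap in the crucial step. Your appeal to \cite[Lemma~1.1]{Wu2001} is misplaced: that lemma is precisely the existence/uniqueness and Girsanov result already quoted in Proposition~\ref{pr.PreKin}; it says nothing about polarity of submanifolds. More importantly, the vague ``Fubini/translation'' description does not constitute a proof in this setting. A Fubini argument only yields $\mathbf P^0_{\mathsf y}[x^0_t=0]=0$ for a.e.~$t$, which you already have; and the paper explicitly points out (see the paragraph following the statement of Lemma~\ref{le.finiteLcin}) that standard nonattainability criteria from \cite{friedman1976} do \emph{not} apply here because the noise is fully degenerate in the $x$-direction and $\{0\}\times\mathscr R^d$ is unbounded and not the boundary of a smooth domain.

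What the paper actually does is a two-step argument. First, for any $\eta\in(0,T]$, it invokes a polarity result for Gaussian processes \cite[Theorem~1]{sohl2010polar} applied to $x^0_t=x+tv+\int_0^t B_u\,du$ on the interval $[\eta,T]$: one checks a H\"older-type increment condition and a uniform nondegeneracy condition $\mathrm{Var}(e\cdot x^0_t)\ge \eta^3/3$ for all unit vectors $e$ and $t\in[\eta,T]$, and concludes that since the relevant Hausdorff dimension parameter satisfies $Q=1<d$, the singleton $\{0\}$ is polar on $[\eta,T]$. This step does not use $x\neq 0$, but it cannot include $t=0$ because the nondegeneracy fails there. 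Second, for small times, one uses $x\neq 0$ directly: on the event $\{\sup_{[0,T]}|B_u|\le M_\epsilon\}$ one has $|x^0_t-x|\le t(|v|+M_\epsilon)$, so if $\eta_\epsilon$ is chosen with $\eta_\epsilon(|v|+M_\epsilon)<|x|$ the process cannot hit $0$ on $(0,\eta_\epsilon)$. Combining the two steps and letting $\epsilon\to 0$ gives the claim. You should either supply this argument or an equivalent one; as written, the key polarity step is unproven.
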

Lemma \ref{le.finiteLcin} shows that the unbounded  closed set $M=\{0\}\times \mathscr R^d$ is, using the terminology introduced in \cite{friedman1976}, nonattainable.  The full degeneracy of the noise on the orthogonal of $M$ as well as the fact that $M$ is not the boundary of a $\mathcal C^3$ bounded domain  prevent us from using directly the results of  \cite{friedman1976}. We will rather rely on direct arguments as well as nonattainability results for Gaussian processes. 
%We mention that an alternative strategy could be to use the general result~\cite[Lemma 2.1 in Chapter 11]{friedman1976}. 
The proof of Lemma~\ref{le.finiteLcin} is given in Section~\ref{sec.prp}.

The killed (outside $\mathscr D$) Feynman-Kac semigroup $( Q_t^{kL}, t\ge 0)$ over  $\mathscr D$ associated  with $\mathbf V_{\mathbf S}$ and  the process $(Y_t,t\ge 0)$ solution to \eqref{eq.Lcin}     is defined by:
\begin{equation}\label{eq.FK-cin1}
 Q_t^{kL}f(\mathsf y)=\mathbb E_{\mathsf y}\Big[f(Y_t)   \, e^{  - \int_0^t  \mathbf V_{\mathbf S}(x_s)ds}  \mathbf 1_{t<\sigma^{kL}_{\mathscr D}}   \Big], \ \text{$t\ge 0$, $\mathsf y\in \mathscr D$, and $f\in b\mathcal B(\mathscr D)$}.
 \end{equation}
Its associated killed renormalized Feynman–Kac semigroup is then defined by, for $\nu\in \mathcal M_b(\mathscr D)$ and $\mathcal A \in \mathcal B(\mathscr D)$, $ \nu  P^{kL}_t (\mathcal A)= {\nu Q^{kL}_t (\mathcal A)}/{\mathscr \nu Q^{kL}_t(\mathscr D)}$. The main result for this killed semigroup  is Theorem \ref{th.kL}.
%$ \nu  Q^B_t (\mathcal A)= {\nu Q^B_t (\mathcal A)}/{\mathscr \nu Q^B_t(\mathscr U)}$, $\mathcal A \in \mathcal B(\mathscr U)$ and  $\nu \in \mathcal M_b(\mathscr V)$. 
%%%

\subsubsection{The last model: interacting Lévy particles} 
%%%

\label{sec.IVB}
So far, in the three   previous models, the potential is created by a fixed point (located at $0$). We now consider the situation when it is no longer the case. More precisely, we consider the following archetypical  model of interacting particles.  
\medskip

\noindent
\textbf{Model 4.}  
Let      $(L^{ j}_t,t\ge 0)_{j=1,\ldots,n}$ be  $n\ge 2$    independent $\mathscr R^d$-copies ($d\ge 2$) of a Lévy process $(L_t,t\ge0)$ satisfying \textbf{[L1]}.   
Denote by 
   \begin{equation}\label{eq.eD}
   \mathscr E:=\{\mathsf x=(x_1,\ldots,x_n) \in (\mathscr  R^d)^n \text{ s.t. } \forall i\neq j, x_i\neq x_j\}.
         \end{equation} 
The set $\mathscr E$ is a non-empty open (unbounded) connected subset of $(\mathscr  R^d)^n$  (recall that $d\ge 2$) and its boundary is $\partial \mathscr E:=\cup_{i<j} \{\mathsf x \in (\mathscr  R^d)^n, x_i=x_j\}$. 
  Let $\mathbf U_{\mathbf S}$ be the Schrödinger potential defined by 
   \begin{equation}\label{eq.Ui}
   \mathbf U_{\mathbf S}: \mathsf x=(x_1,\ldots,x_n) \in  \mathscr E \mapsto \sum_{i=1}^n  \mathbf V_{\infty}(x_i) +   \sum_{i<j}^n  \mathbf v_{\mathbf S} (|x_i-x_j|),
     \end{equation} 
   where   
 \begin{equation}\label{eq.VCC}
 \text{$\mathbf V_{\infty}:\mathscr R^d\to \mathscr R$ is a continuous coercive potential},
  \end{equation} 
   and for some  $k_{\mathbf S}\ge 0$:  
 \begin{equation}\label{eq.HS2}
 \mathbf v_{\mathbf S}:\mathscr R_+^*\to \mathscr R  \text{ is continuous, $ \mathbf v_{\mathbf S}\ge -k_{\mathbf S}$, and }   \mathbf v_{\mathbf S}(u)\to+\infty \text{ iff } u \to 0^+.
 \end{equation} 
 By coercive we mean here: $\mathbf V_{\infty}(x)\to +\infty$ when $|x|\to +\infty$.  The form of the potential $\mathbf U_{\mathbf S}$  is typically the one used in molecular dynamics. 
   Note that $\mathbf U_{\mathbf S}$ is continuous and lower bounded over $\{\mathbf U_{\mathbf S}<+\infty\}=\mathscr E$, and $\mathbf U_{\mathbf S}(\mathsf x)\to +\infty$ if and only if $|\mathsf x|\to \infty$ or $\mathsf x\to \partial \mathscr E$.  For   $t\ge 0$ and $\mathsf x=(x_1,\ldots,x_n) \in (\mathscr  R^d)^n$, consider the process $( \Theta_t(\mathsf x),t\ge 0)$ defined by:
   \begin{equation}\label{eq.theta}
   \Theta_t(\mathsf x):= (x_1+L_t^{  1},\ldots,x_n+L_t^{  n}).
      \end{equation} 
Let $\mathscr U$ be a subdomain of  $\mathscr E$ and set  $\sigma^\Theta_{\mathscr U}:=\inf \{t\ge 0, \Theta_t\notin \mathscr U\}$. The killed (outside $\mathscr U$) Feynman-Kac semigroup $(  Q^\Theta_t, t\ge 0)$ over  $\mathscr U$ associated  with $\mathbf U_{\mathbf S}$ and  the Lévy process $(\Theta_t,t\ge 0)$  is defined by:
\begin{equation}\label{eq.FK-Ll}
 Q^\Theta_tf(\mathsf x)=\mathbb E_{\mathsf x}\Big[f(\Theta_t)   \, e^{  - \int_0^t  \mathbf U_{\mathbf S}(\Theta_s)ds}  \mathbf 1_{t<\sigma^\Theta_{\mathscr U}}   \Big], \ \text{$t\ge 0$, $\mathsf x \in \mathscr U$, and $f\in b\mathcal B(\mathscr U)$},
 \end{equation}
 and its killed renormalized  semigroup is 
$ \nu  P^\Theta_t (\mathcal A)= {\nu Q^\Theta_t (\mathcal A)}/{\mathscr \nu Q^\Theta_t(\mathscr U)}$, $\mathcal A \in \mathcal B(\mathscr U)$ and  $\nu \in \mathcal M_b(\mathscr U)$. 
\begin{lem}\label{le.Scc}
For all $\mathsf x\in  \mathscr E$ and $T\ge 0$,  $
\mathbb P_{\mathsf x}[ \Theta_{[0,T]} \in \mathcal D([0,T],\mathscr E)]=1$ and $ \int_0^T  \mathbf U_{\mathbf S}(\Theta_s(\mathsf x))ds$ is  a.s.  finite. 
\end{lem}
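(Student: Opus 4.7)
The plan is to reduce the statement to a finite collection of pairwise non-collision events, each of which is handled by applying Lemma~\ref{le.finitealpha-s} to the Lévy process obtained as the difference of two independent copies of $(L_t,t\ge 0)$. Since $\partial \mathscr E=\bigcup_{i<j}\{\mathsf y\in(\mathscr R^d)^n : y_i=y_j\}$, the event $\{\Theta_{[0,T]}\in \mathcal D([0,T],\mathscr E)\}$ coincides with the event that, for every ordered pair $i<j$, neither the process
\[
M^{ij}_t\;:=\;(x_i+L^{\mathfrak i}_t)-(x_j+L^{\mathfrak j}_t)\;=\;(x_i-x_j)+(L^{\mathfrak i}_t-L^{\mathfrak j}_t)
\]
nor its left limit hits $0$ on $[0,T]$. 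The starting point $x_i-x_j$ belongs to $\mathscr R^d_0$ since $\mathsf x\in\mathscr E$.

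First I would verify that $(L^{\mathfrak i}_t-L^{\mathfrak j}_t,t\ge 0)$ is itself a Lévy process on $\mathscr R^d$ satisfying hypothesis \textbf{[L1]}. Indeed, by independence of $L^{\mathfrak i}$ and $L^{\mathfrak j}$ it has càdlàg paths, starts at $0$, and inherits independent stationary increments; and for $t>0$ the law of $L^{\mathfrak i}_t-L^{\mathfrak j}_t$ is the convolution of the law of $L_t$ with that of $-L_t$, both of which admit densities with respect to Lebesgue measure by \textbf{[L1]}, hence so does the convolution. Applying Lemma~\ref{le.finitealpha-s} (in particular~\eqref{eq.Ty}) to this Lévy process shifted by $x_i-x_j\in \mathscr R^d_0$, one gets that a.s. there exists $\epsilon_{ij}>0$ with $|M^{ij}_t|\ge \epsilon_{ij}$ for every $t\in[0,T]$. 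Intersecting the corresponding almost-sure events over the finitely many pairs and taking $\epsilon:=\min_{i<j}\epsilon_{ij}>0$ yields
\[
\min_{i<j}\;\bigl|(x_i+L^{\mathfrak i}_t)-(x_j+L^{\mathfrak j}_t)\bigr|\;\ge\;\epsilon\quad\text{for all }t\in[0,T],\ \mathbb P_{\mathsf x}\text{-a.s.},
\]
which directly gives $\mathbb P_{\mathsf x}[\Theta_{[0,T]}\in \mathcal D([0,T],\mathscr E)]=1$.

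For the integrability claim, I would combine the above uniform separation with the fact that each coordinate trajectory $t\mapsto x_i+L^{\mathfrak i}_t$ is càdlàg and therefore a.s. bounded on the compact interval $[0,T]$ by some (random) constant $R<+\infty$. By continuity of $\mathbf V_{\infty}$ on $\mathscr R^d$, the confining terms $\mathbf V_{\infty}(x_i+L^{\mathfrak i}_t)$ are a.s. bounded on $[0,T]$; by continuity of $\mathbf v_{\mathbf S}$ on the compact set $[\epsilon,2R]$, the pair-interaction terms $\mathbf v_{\mathbf S}(|(x_i+L^{\mathfrak i}_t)-(x_j+L^{\mathfrak j}_t)|)$ are also a.s. bounded on $[0,T]$. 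Summing the finitely many terms appearing in~\eqref{eq.Ui} shows that $s\mapsto \mathbf U_{\mathbf S}(\Theta_s(\mathsf x))$ is a.s. bounded on $[0,T]$, so its integral is a.s. finite.

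The step I expect to be the main (though still minor) obstacle is the careful verification that the Lévy process $L^{\mathfrak i}-L^{\mathfrak j}$ satisfies \textbf{[L1]} so that Lemma~\ref{le.finitealpha-s} applies; once this is in hand, everything else reduces to a finite union bound and the simple observation that càdlàg paths are bounded on compacts.
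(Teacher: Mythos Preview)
Your proof is correct and follows essentially the same route as the paper: both reduce to showing that each pairwise difference $L^{\mathfrak i}-L^{\mathfrak j}$ is a L\'evy process satisfying \textbf{[L1]}, then invoke the Kesten-type nonattainability result (the paper cites it directly, you go through Lemma~\ref{le.finitealpha-s}) to obtain a uniform lower bound on $|M^{ij}_t|$ over $[0,T]$. Your treatment of the integrability claim is in fact more explicit than the paper's, which leaves that step implicit once the uniform separation from $\partial\mathscr E$ is established.
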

%{eq.Cc}
\begin{proof} For all $i\neq j$ and $t\ge 0$, set  $\Upsilon_t^{i,j}=L^{ j}_t-L^{ i}_t$. Then, $(\Upsilon_t^{i,j}, t\ge 0)$ is a Lévy process and for $t>0$, $\Upsilon_t^{i,j}$ has a density w.r.t. the Lebesgue measure. Therefore, from~\cite[Theorem~3]{kesten},  we have for all  $\mathsf x\in \mathscr E$ and for all $i\neq j$,  
$\mathbb P_{\mathsf x}[\exists t\ge  0, \Upsilon_{t^-}^{i,j}=x_i-x_j \text{ or } \Upsilon_t^{i,j}=x_i-x_j ]=0$.  Fix $\mathsf x\in \mathscr E$, $i\neq j$, and $t\ge 0$. 
Then   there exists a.s. $\epsilon>0$  such that  
${\rm dist}\, (\Theta_s(\mathsf x), \partial \mathscr E )\ge \epsilon$, for all $s\in [0,t]$. Else there exists $t_*\in [0,T]$ such that either ${\rm dist}\, (\Theta_{t_*^-}(\mathsf x), \partial \mathscr E )=0$ or  ${\rm dist}\, (\Theta_{t_*}(\mathsf x), \partial \mathscr E )=0$, but this event has null probability.  
%\mathscr E est fermé
%there exists $\epsilon_n\to 0^+$  and $t_n\in [0,T]$ such that ${\rm dist}\, (\Theta_{t_n}(\mathsf x), \partial \mathscr E )\le \epsilon_n$. Up to extracting a subsequence,  we assume that $t_n\to t^*\in [0,T]$. 
 %One of the two sets $S^*_-=\{n\ge 1, t_n<t^*\}$ and $S^*_+=\{n\ge 1, t^*\le t_n\}$  has an infinite cardinality.   If $S^*_-$ has an infinite cardinality, then, ${\rm dist}\, (\Theta_{t^-}(\mathsf x), \partial \mathscr E )=0$,   whereas if $S^*_+$ has an infinite cardinality, ${\rm dist}\, (\Theta_{t_*}(\mathsf x), \partial \mathscr E )=0$, but this as null probability.  
%\begin{align*}
%\mathbb P_{\mathsf x}[\exists s> 0, \Theta_s(x)\in \partial \mathscr E] &= \mathbb P_{\mathsf x}[\exists s\ge  0 \text{ and } i\neq j, L^{ j}_s-L^{ i}_s=x_i-x_j]\\
%&\le \sum_{i< j} \mathbb P_{\mathsf x}[\exists s\ge  0, L^{ j}_s-L^{ i}_t=x_i-x_j]=0,
%\end{align*}
%  which follows from~\cite[Theorem 3]{kesten}. 
  %there exist a.s. $\epsilon>0$ and $M>0$ such that $ |\Theta_s(\mathsf x)|\le M$  and ${\rm dist}\, (\Theta_s(\mathsf x), \partial \mathscr E )\ge \epsilon$, for all $s\in [0,t]$
  \end{proof}
This fourth  model is motivated, when considering e.g. Brownian motions,  by its strong  relation with  the Schrödinger  operator $-\Delta + \frac 12\sum_{i<j}|x_j-x_i|^{-1}$ with Dirichlet boundary conditions over $\mathscr U$. Given a subdomain $\mathscr U$ of $\mathscr E$, we define the assumptions
\begin{enumerate}
\item[] \textbf{[L4]}  $(\Theta_t({\mathsf x}),t\in [0, \sigma^\Theta_{\mathscr U}) )$     is topologically irreducible  over $\mathscr U$, i.e.  for all $t>0$,   all $\mathsf x\in \mathscr U$,  and all non-empty open subset $O$ of $\mathscr U$,  $\mathbb P_{\mathsf x}(L_t\in O, t<\sigma^\Theta_{\mathscr U})>0$.  
\item[] \textbf{[L5]}  If $\mathscr E\setminus \overline{\mathscr U}$ is nonempty, then $\mathbb P_{\mathsf x}[\sigma^\Theta_{\mathscr U}<+\infty]>0$ for some ${\mathsf x}\in \mathscr U$. 
\end{enumerate}
%%%
\textit{Inter alia},
%entre autres
 processes satisfying  \textbf{[L4]} and \textbf{[L5]} include the cases when $(L_t,t\ge0)$ is: (i) a standard Brownian motion,  (ii) a jump diffusion process, or (iii) rotationally invariant $\alpha$-stable processes, see the appendix. 
The main result for the killed semigroup $(  Q^\Theta_t, t\ge 0)$  is Theorem \ref{th.Bn}.

%%%%%

%%%%%

%%%%%

\subsection{Main results}
  \label{sec.MR}

 \subsubsection{Some definitions} 
\label{sec.Xx} 
 Let $\mathscr M$ be an open subset of $\mathscr R^k$, $k\ge 2$, and $(\mathfrak X_t, t\ge 0)$  be a   time homogeneous strong Markov process  with càdlàg sample paths in $\mathscr M$. Consider a lower bounded  and continuous Schrödinger potential  $\mathbf V:\mathscr M \to \mathscr R$ and a subdomain $\mathscr V$ of $\mathscr M$. Let  $\sigma^{\mathfrak X}_{\mathscr V}:=\inf\{t\ge 0, \mathfrak X_t\notin \mathscr V\}$ be the first exit time from $\mathscr V$ for the process $(\mathfrak X_t, t\ge 0)$. 
 Assume that for all $\mathfrak z\in \mathscr M$ and $t\ge 0$, 
\begin{equation}\label{eq.C--1}
\mathbb P_{\mathfrak z}\big [\int_0^t  \mathbf V (\mathfrak X_s) ds <+\infty\big ]=1.
\end{equation}
 The associated killed Feynman-Kac semigroup   is then defined by 
\begin{equation}\label{eq.C--2}
Q^{\mathfrak X}_tf(\mathfrak z)=\mathbb E_{\mathfrak z} \big [f(\mathfrak X_t)   \, e^{  - \int_0^t  \mathbf V (\mathfrak X_s)ds}  \mathbf 1_{t<\sigma^{\mathfrak X}_{\mathscr V}} \big ],
\end{equation}
  for $\mathfrak z\in \mathscr V$  and $f\in b\mathcal B(\mathscr V)$, and the killed renormalized Feynman-Kac semigroup is: 
  $$ \nu  P^{\mathfrak X}_t (\mathcal A)= \frac{\nu Q^{\mathfrak X}_t (\mathcal A)}{\mathscr \nu Q^{\mathfrak X}_t(\mathscr V)}, \text{   for   $\mathcal A \in \mathcal B(\mathscr V)$ and $\nu \in \mathcal M_b(\mathscr V)$}.$$ 
  Note that the killed Feynman-Kac semigroup 
  is strongly related to the solution of the evolution equation  
\begin{equation}\label{eq.Sch}
\partial_tg= \mathscr L g- \mathbf V g,
\end{equation}
with absorbing boundary conditions, 
  where $\mathscr L-\mathbf V$ is the so-called    Schrödinger operator and $\mathscr L$ the infinitesimal generator of the process $(\mathfrak X_t,t\ge 0)$.

 We  recall   the notion of quasi-stationary distribution  (q.s.d.)  of  absorbed Markov chains introduced    in the context of population  processes~\cite{collet2012quasi,meleard2012quasi,champagnat2021lyapunov}, an object which is also at the heart of the analysis of  metastable processes~\cite{di-gesu-lelievre-le-peutrec-nectoux-17,DLLN-saddle1,DLLN}.

\noindent
\begin{defn}
\label{def.QSD}
  A measure $ \mu \in \mathcal P(\mathscr V)$ is a  q.s.d. for the killed renormalized Feynman–Kac semigroup $(P_t,t\ge 0)$   over $ \mathscr V $ if $
\mu P^{\mathfrak X}_t(\mathcal A)=\mu(\mathcal A)$, $\forall t\ge 0$ and $\forall \mathcal A\in \mathcal B(\mathscr V)$. 
\end{defn}
 
 In order to easily state our result, we introduce the notion of   \textit{compact-ergodic} operator.

\begin{defn}\label{de.2}
Let  $\mathbf F: \mathscr V\to [1,+\infty]$ be a measurable function. 
   We say that  $(Q^{\mathfrak X}_t,t\ge 0)$ is    $\mathbf F$ {\rm compact-ergodic} over $\mathscr V$   if:
\begin{enumerate}
 \item[\textbf 1.] There exists a unique q.s.d.  $\rho$ for  $(P^{\mathfrak X}_t,t\ge 0)$ in   $  \mathcal P_{\mathbf F}(\mathscr V)$.  
 \item[\textbf 2.] 
  For all $t>0$, $Q^{\mathfrak X}_t:b\mathcal B_{\mathbf F}(\mathscr V)\to b\mathcal B_{\mathbf F}(\mathscr V)$ is compact and there exists $\lambda\ge  \inf_{\mathscr V} \mathbf V$   such that $\mathsf r_{sp}(Q^{\mathfrak X}_t|_{b\mathcal B_{\mathbf F}(\mathscr V)} ) =e^{-\lambda t}, \ \forall t>0$. 
  Furthermore,  $\rho    Q^{\mathfrak X}_t =e^{-\lambda t}\rho$, for all $t\ge 0$, and $\rho (O)>0$ for all nonempty open subsets $O$ of $ \mathscr V$. In addition,  there is a unique   function $\varphi \in \mathcal C_{b\mathbf F}( \mathscr V)$   such that  $\rho   (\varphi )=1$ and $
      Q^{\mathfrak X}_t \varphi= e^{-\lambda  t} \varphi  \text{ on } \mathscr V, \forall t\ge0$.  
 Moreover, $\varphi>0$   everywhere on $\mathscr V$.
\item[\textbf 3.] There exist   $\mathfrak m_1>0$, and   $\mathfrak m_2\ge 1$, s.t.  for all  $t>0$ and all   $\nu\in \mathcal P_{\mathbf F  }( \mathscr V)$:
$$
\sup_{\mathcal A\in\mathcal B(\mathscr V)}\big |  \nu P^{\mathfrak X}_t(\mathcal A)-\rho  (\mathcal A)\big |\le \mathfrak m_2 \, e^{-\mathfrak m_1 t} \frac{\nu(\mathbf F )}{\nu(\varphi)}.
$$
\end{enumerate}
 \end{defn}
 The real  number  $\lambda$ (resp. the function $\varphi$) in the above  definition is usually called the \textit{principal eigenvalue} (resp.  the \textit{principal eigenfunction}) of $(Q^{\mathfrak X}_t,t\ge 0)$ in   $b\mathcal B_{\mathbf F}(\mathscr V)$. When $\mathbf F=\mathbf W^{1/p}$,  $p\in (1,+\infty)$, we   write these two quantities   $\lambda_p$ and $\rho_p$. 
We will simply say that $(Q^{\mathfrak X}_t,t\ge 0)$ is \textit{compact-ergodic over $\mathscr V$} when $(Q^{\mathfrak X}_t,t\ge 0)$ is $\mathbf 1$ compact-ergodic over $\mathscr V$. 
%Finally, define $i_{\mathbf V,\mathscr V}:=\inf_{\mathscr V} \mathbf V$, we say that   \textbf{[S$_{\mathbf V,\mathscr V}$]} holds if 
% \begin{itemize}
% \item[] \textbf{[O1]} When $\mathscr V\$\mathbb P_{\mathfrak z}[\sigma_{\mathscr V}  <+\infty]>0$ for some $\mathfrak z\in \mathscr V$. Either $\mathbf V \neq i_{\mathbf V,\mathscr V}$ over $\mathscr V$, or, if it is not the case,  
% \item[] \textbf{[O2]} $\mathbb P_{\mathfrak z}[\sigma_{\mathscr V}  <+\infty]>0$ for some $\mathfrak z\in \mathscr V$.
% \end{itemize}

%%%
  \subsubsection{Main result for the first model}
   
 The first main result of this section concerns the long time behavior of the killed  semigroup   defined in \eqref{eq.FK-sur1}. Recall   \eqref{eq.RD0}, Lemma \ref{le.finiteLsur}, and that $\mathscr O$ is a subdomain of $\mathscr R^d_0$.  
 
 \begin{thm}\label{th.oL} Let   $(Q_t^{oL},t\ge 0)$ be defined in \eqref{eq.FK-sur1}. Then:
\begin{enumerate}
\item [] \textbf{Case 1.} Assume {\rm \textbf{[c1]}}. Assume also {\rm \textbf{[S1]}} or {\rm \textbf{[S2]}}.  Consider the function $\mathbf W$ defined in \eqref{eq.W}.
Then, 
for all  $p\in (1,+\infty)$,    $(Q_t^{oL},t\ge 0)$   is $\mathbf W^{1/p}$ compact-ergodic over $\mathscr O$. If moreover  $\mathscr R^d_0\setminus \overline{\mathscr O}$ is non-empty or  if  $\mathscr O=\mathscr R^d_0$, then $\lambda_p>  \inf_{\mathscr O} \mathbf V_{\mathbf S}$  where $\lambda_p$
 is the principal eigenvalue of $(Q^{oL}_t,t\ge 0)$ in   $ b\mathcal B_{\mathbf W^{1/p}}(\mathscr O)$.
 \end{enumerate}
Actually when {\rm \textbf{[S2]}} holds we have the following stronger result.
 \begin{enumerate}
 \item [] \textbf{Case 2.} Assume   {\rm \textbf{[S2]}}. Assume also {\rm \textbf{[c1]}} or {\rm \textbf{[c2]}}. Then,    $(Q_t^{oL},t\ge 0)$   is  compact-ergodic over $\mathscr O$. If moreover  $\mathscr R^d_0\setminus \overline{\mathscr O}$ is non-empty or  if  $\mathscr O=\mathscr R^d_0$, then $\lambda>  \inf_{\mathscr O} \mathbf V_{\mathbf S}$  where $\lambda$
 is the principal eigenvalue of $(Q^{oL}_t,t\ge 0)$ in   $ b\mathcal B (\mathscr O)$.
 \end{enumerate}
  \end{thm}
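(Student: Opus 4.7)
The plan is to verify the three hypotheses of the Perron--Frobenius theorem of \cite{guillinqsd} for the Feller kernel $Q^{oL}_t$ (for any fixed $t>0$), acting on the weighted space $b\mathcal B_{\mathbf F}(\mathscr O)$, with $\mathbf F=\mathbf W^{1/p}$ in Case~1 and $\mathbf F=\mathbf 1$ in Case~2. The three ingredients to check are: (a) a Lyapunov drift condition of the form $(\mathscr L^{oL}-\mathbf V_{\mathbf S})\mathbf F/\mathbf F\to -\infty$ as $x$ exits compacts of $\mathscr O$; (b) topological irreducibility of $(X_t)$ killed at $\sigma^{oL}_{\mathscr O}$; and (c) strong Feller continuity of $Q^{oL}_t$ on compact subsets of $\mathscr O$. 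From (a), the standard essential spectral radius argument yields both an exponential majorization $Q^{oL}_t\mathbf F\le C_t\mathbf F$ and vanishing of the essential spectral radius of $Q^{oL}_t|_{b\mathcal B_{\mathbf F}(\mathscr O)}$. Combined with (b)--(c), the Perron--Frobenius theorem then delivers a simple dominant eigenvalue $\lambda$, a positive eigenfunction $\varphi\in\mathcal C_{b\mathbf F}(\mathscr O)$, a unique q.s.d.\ $\rho\in\mathcal P_{\mathbf F}(\mathscr O)$, and the exponential convergence stated in Definition~\ref{de.2}.

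For (a) in Case~1, set $g_p:=\mathbf W^{1/p}=e^{\epsilon\mathbf L/p}$. A direct computation gives
\[
\frac{\mathscr L^{oL}g_p}{g_p}=\frac{\epsilon}{p}\,\mathbf b_{\mathbf c}\cdot\nabla\mathbf L+\frac{\epsilon^2}{2p^2}|\nabla\mathbf L|^2+\frac{\epsilon}{2p}\Delta\mathbf L.
\]
Since $\nabla\mathbf L(x)=x/|x|$ outside a neighborhood of $0$ and $\nabla\mathbf L,\Delta\mathbf L$ are bounded on $\mathscr R^d$, assumption \textbf{[c1]} forces the right-hand side to tend to $-\infty$ as $|x|\to\infty$. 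Near $0$, $\mathbf V_{\mathbf S}\to+\infty$ by \textbf{[S1]} or \textbf{[S2]}, and $g_p$ stays bounded, so $(\mathscr L^{oL}-\mathbf V_{\mathbf S})g_p/g_p\to -\infty$ at both ends of $\mathscr R^d_0$. In Case~2 one takes $g=\mathbf 1$: under \textbf{[S2]}, $-\mathbf V_{\mathbf S}$ already tends to $-\infty$ at both $0$ and $\infty$, so no help from the drift is needed, and the weaker hypothesis \textbf{[c2]} suffices, using non-explosion (Proposition~\ref{pr.Pre}) together with the elementary bound $\|Q^{oL}_tf\|_\infty\le e^{k_{\mathbf S}t}\|f\|_\infty$.

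For (b)--(c), the Girsanov identity \eqref{eq.Girsanov} makes $\mathbf P_x$ and $\mathbf P_x^0$ mutually absolutely continuous on $\mathcal F_T$, and Lemma~\ref{le.finiteLsur} keeps paths in $\mathscr R^d_0$; topological irreducibility of $(X_t)$ killed at $\sigma^{oL}_{\mathscr O}$ then reduces to the classical statement for Brownian motion on a subdomain of $\mathscr R^d_0$. The strong Feller property follows from the smooth transition density of the elliptic SDE~\eqref{eq.Lsur} on compact subsets of $\mathscr O$, combined with continuity of $x\mapsto\mathbb E_x[f(X_t)e^{-\int_0^t\mathbf V_{\mathbf S}(X_s)ds}\mathbf 1_{t<\sigma^{oL}_{\mathscr O}}]$ via dominated convergence, for which the Lyapunov bound and $\mathbf V_{\mathbf S}\ge -k_{\mathbf S}$ supply uniform integrability.

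The most delicate point, and the expected main obstacle, is the strict inequality $\lambda>\inf_{\mathscr O}\mathbf V_{\mathbf S}$ (and its $\lambda_p$ analogue). Testing $\rho Q^{oL}_t=e^{-\lambda t}\rho$ against $\mathbf 1$ yields
\[
e^{-\lambda t}\rho(\mathscr O)=\int_{\mathscr O}\mathbb E_x\!\left[e^{-\int_0^t\mathbf V_{\mathbf S}(X_s)ds}\mathbf 1_{t<\sigma^{oL}_{\mathscr O}}\right]\rho(dx).
\]
If $\mathscr R^d_0\setminus\overline{\mathscr O}\neq\emptyset$, irreducibility together with $\rho(O)>0$ for every nonempty open $O\subset\mathscr O$ gives $\mathbb P_x[\sigma^{oL}_{\mathscr O}\le t]>0$ on a $\rho$-positive set, strictly reducing the right-hand side below $e^{-t\inf_{\mathscr O}\mathbf V_{\mathbf S}}\rho(\mathscr O)$. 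If $\mathscr O=\mathscr R^d_0$, non-constancy of $\mathbf V_{\mathbf S}$ (which blows up at $0$) and irreducibility imply $\mathbf V_{\mathbf S}(X_s)>\inf\mathbf V_{\mathbf S}$ on a set of positive $ds\otimes d\mathbb P_x$-measure, again yielding the strict inequality. The chief technical difficulty throughout is coordinating the weighted $\mathbf W^{1/p}$-framework (designed to absorb the drift at infinity in Case~1) with the blow-up of $\mathbf V_{\mathbf S}$ near $0$, so that strong Feller continuity and compactness survive the passage to the singularity; the Lyapunov estimate of step (a) is precisely tailored to accomplish this.
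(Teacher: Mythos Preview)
Your outline follows the paper's route exactly: Lyapunov condition (this is Proposition~\ref{th.infty}), topological irreducibility (Proposition~\ref{pr.TI}), strong Feller (Theorem~\ref{th.SF1}), then the Perron--Frobenius theorem of \cite{guillinqsd}; the strict-inequality argument by contradiction is also the same. Two of your justifications, however, are too optimistic and conceal the real work.

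First, you cannot invoke a ``smooth transition density'' for \eqref{eq.Lsur}: $\mathbf b_{\mathbf c}$ is only locally Lipschitz, so no regularity of $p_t(x,y)$ is available a priori, and your dominated-convergence shortcut does not obviously survive the blow-up of $\mathbf V_{\mathbf S}$ at $0$. The paper's proof of strong Feller (Theorem~\ref{th.SF1}) proceeds in several genuine steps: an energy-splitting argument for the unkilled semigroup $U^{oL}_t$; a uniform-integrability argument on densities (\cite[Lemma~3.2]{wu1999}) to upgrade $X_t(x_n)\to X_t(x)$ in probability to $f(X_t(x_n))\to f(X_t(x))$ in probability for \emph{all} bounded Borel $f$; the continuous mapping theorem, using $\mathbb P_x[X_{[0,t]}\in\mathcal C([0,t],\mathscr R^d_0)]=1$, to handle the singular Feynman--Kac factor; and a Markov-property trick to pass from the non-killed to the killed semigroup. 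Second, the vanishing of the essential spectral radius is not a direct consequence of your condition (a). The paper's Theorem~\ref{th.Ress} bounds $\beta_w(T^{oL}_{t,\mathbf W_\star})$ by splitting on the first hitting time $\tau_{\mathscr K_0}$ of a fixed compact: the piece $\{t<\tau_{\mathscr K_0}\}$ is controlled by the supermartingale coming from the Lyapunov bound, but the piece $\{\tau_{\mathscr K_0}\le t\}$ requires a tightness argument (continuity of $z\mapsto\mathbb P[X_{[0,t]}(z)\in\cdot]$ on $\mathcal P(\mathcal C([0,t],\mathscr R^d_0))$) combined with a H\"older estimate that uses the extra integrability $(\mathscr L^{oL}-\mathbf V_{\mathbf S})\mathbf W_\star^p\le\mathfrak m_0\mathbf W_\star^p$. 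This second condition is precisely why $p>1$, not $p=1$, is needed in Case~1.
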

  
  In both cases the confinement at $0$ comes from  $\mathbf V_{\mathbf S}$. 
In the first   case the confinement at $+\infty$ comes from $\mathbf V_{\mathbf c}$ (and also  from $\mathbf V_{\mathbf S}$  if {\rm \textbf{[S2]}} holds). In the second case  the confinement at $+\infty$ comes from $\mathbf V_{\mathbf S}$ (and also  from $\mathbf V_{\mathbf c}$  if {\rm \textbf{[c1]}} holds).

Note that in the first case above,    any deterministic initial conditions $x\in \mathscr O$ belongs to $\mathcal P_{\mathbf W ^{1/p}}(\mathscr O)$ and thus,   the  q.s.d. $\rho  _p\in \mathcal P_{\mathbf W ^{1/p}}(\mathscr O)$ and the principal eigenvalue $\lambda_p $ of $(Q_t^{oL},t\ge 0)$ in $ b\mathcal B_{\mathbf W^{1/p}}(\mathscr O)$    are independent of $p>1$. We also mention that in the first case above, Theorem~\ref{th.oL}  also holds with  a smaller Lyapunov function than \eqref{eq.W} when $\mathbf V_{\mathbf c}$ grows sufficiently fast at $+\infty$, see \eqref{eq.alternativeW}. 
Theorem \ref{th.oL} is proved in Section~\ref{sec.Sec-th1}.

\subsubsection{Non singular Schrödinger  potential}  
 \label{sec.sec-ns}

Assume in this section (and    only in this section) that $d\ge 1$. Let  $\mathscr O$ be any subdomain of $\mathscr R^d$. Here,  we consider the   much simpler case when    the Schrödinger  potential    is not singular. Let $\mathbf J_{\mathbf S}:\mathscr R^d\to \mathscr R$ be a continuous   lower bounded and coercive potential. 
The killed  Feynman-Kac semigroup    over    $\mathscr O$ associated  with $\mathbf J_{\mathbf S}$ and   \eqref{eq.Lsur} is defined by:
\begin{equation}\label{eq.OL2}
 S_t^{oL}f(x)=\mathbb E_x\Big[f(X_t)   \, e^{  - \int_0^t  \mathbf J_{\mathbf S}(X_s)ds}    \mathbf 1_{t<\sigma^{oL}_{\mathscr O}}  \Big],   \ \text{$t\ge 0$, $x\in \mathscr O$, and $f\in b\mathcal B(\mathscr O)$}.
\end{equation}

 \begin{prop}\label{th.1-bis} 
Assume   {\rm \textbf{[S2]}}. Assume also {\rm \textbf{[c1]}} or {\rm \textbf{[c2]}}.  
Then, the semigroup $( S_t^{oL},t\ge 0)$ given by \eqref{eq.OL2}  is  compact-ergodic over $\mathscr R^d$  (see Definition \ref{de.2}). Moreover,  $\lambda>  \inf_{\mathscr R^d} \mathbf J_{\mathbf S}$ where $\lambda$
 is the principal eigenvalue of $( S_t^{oL},t\ge 0)$  in   $  b\mathcal B(\mathscr R^d)$.
 \end{prop}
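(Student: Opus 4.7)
The plan is to treat Proposition~\ref{th.1-bis} as the non-singular counterpart of Case~2 of Theorem~\ref{th.oL}: the state space is now the full $\mathscr R^d$ (no singularity needs to be removed), and the coercive potential $\mathbf J_{\mathbf S}$ alone supplies both the killing and the confinement at infinity. Consequently the scheme of the proof of Theorem~\ref{th.oL} Case~2 can be reused, with the Lyapunov weight simplified to $\mathbf W\equiv \mathbf 1$.

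First I would note that, for the generator $\mathscr L^{oL}-\mathbf J_{\mathbf S}$ of the Feynman-Kac semigroup, the drift condition on $\mathbf W \equiv \mathbf 1$ reads
\begin{equation*}
\frac{(\mathscr L^{oL}-\mathbf J_{\mathbf S})\mathbf 1}{\mathbf 1}(x) = -\mathbf J_{\mathbf S}(x) \longrightarrow -\infty \quad \text{as } |x|\to +\infty,
\end{equation*}
by coercivity of $\mathbf J_{\mathbf S}$, while $-\mathbf J_{\mathbf S}$ is uniformly upper bounded. Together with the non-explosion of $(X_t,t\ge 0)$ (Proposition~\ref{pr.Pre}, under either \textbf{[c1]} or \textbf{[c2]}) and the strong Feller regularity of the elliptic diffusion, this drift estimate is enough to run the tightness/compactness criterion of Wu (2004) — already invoked in the proof of Theorem~\ref{th.oL} Case~2 — and to conclude that $S_t^{oL}:b\mathcal B(\mathscr R^d)\to b\mathcal B(\mathscr R^d)$ is compact for every $t>0$. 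Topological irreducibility on $\mathscr R^d$ is immediate from the Girsanov identity \eqref{eq.Girsanov} combined with the irreducibility of the underlying Brownian motion. The Perron-Frobenius type theorem of Guillin-Nectoux-Wu then delivers a principal eigenvalue $\lambda$, a unique positive bounded continuous eigenfunction $\varphi$, a unique q.s.d.\ $\rho\in \mathcal P(\mathscr R^d)$, and the uniform exponential convergence required by Definition~\ref{de.2}, which together establish the compact-ergodicity of $(S_t^{oL},t\ge 0)$ over $\mathscr R^d$.

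For the strict inequality $\lambda>\inf_{\mathscr R^d}\mathbf J_{\mathbf S}$, I would integrate the eigenfunction relation $\mathscr L^{oL}\varphi = (\mathbf J_{\mathbf S}-\lambda)\varphi$ against the q.s.d.\ $\rho$ to obtain $\int(\mathbf J_{\mathbf S}-\lambda)\varphi\,d\rho=0$; since $\varphi>0$ everywhere and $\rho$ charges every non-empty open subset of $\mathscr R^d$, the equality $\lambda=\inf \mathbf J_{\mathbf S}$ would force $\mathbf J_{\mathbf S}$ to coincide with its infimum on a dense subset of $\mathscr R^d$, which contradicts coercivity. The main technical obstacle is the Wu (2004) compactness step on an unbounded domain, but since $\mathbf J_{\mathbf S}\to +\infty$ at infinity already provides stronger confinement than any weighted sup-norm would require, this step is in fact easier here than in Theorem~\ref{th.oL} Case~2, and the rest is a routine application of the Perron-Frobenius machinery.
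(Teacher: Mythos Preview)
Your overall strategy---reduce to Case~2 of Theorem~\ref{th.oL} with the trivial Lyapunov function $\mathbf W\equiv\mathbf 1$, use coercivity of $\mathbf J_{\mathbf S}$ to feed Wu's compactness criterion, and conclude via the Perron--Frobenius theorem of \cite{guillinqsd}---is exactly what the paper intends (the proof is omitted there precisely because it is this direct adaptation).

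There is, however, a genuine gap in your argument for the strict inequality $\lambda>\inf_{\mathscr R^d}\mathbf J_{\mathbf S}$. You claim that integrating $\mathscr L^{oL}\varphi=(\mathbf J_{\mathbf S}-\lambda)\varphi$ against $\rho$ yields $\int(\mathbf J_{\mathbf S}-\lambda)\varphi\,d\rho=0$, i.e.\ that $\int\mathscr L^{oL}\varphi\,d\rho=0$. But $\rho$ is the quasi-stationary distribution of the \emph{Feynman--Kac} semigroup, not an invariant measure for the unkilled generator $\mathscr L^{oL}$; there is no reason for this integral to vanish. What you actually know is $(\mathscr L^{oL}-\mathbf J_{\mathbf S})^*\rho=-\lambda\rho$, which paired with the eigenfunction equation is tautological. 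On top of this, $\varphi$ is only known to lie in $\mathcal C_b(\mathscr R^d)$, not in the domain of $\mathscr L^{oL}$, so even writing the pointwise relation needs justification.

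The paper's route (Step~2.b of the proof of Theorem~\ref{th.oL}) sidesteps both issues by working purely at the semigroup level: assuming $\lambda=\inf\mathbf J_{\mathbf S}$, one has $\rho(e^{-\lambda t}\mathbf 1-S_t^{oL}\mathbf 1)=0$ with $e^{-\lambda t}\mathbf 1-S_t^{oL}\mathbf 1\ge 0$ continuous; since $\rho$ charges every nonempty open set, this forces $\mathbb E_x\big[e^{-\int_0^t(\mathbf J_{\mathbf S}-\inf\mathbf J_{\mathbf S})(X_s)\,ds}\big]=1$ for all $x$, hence $\mathbf J_{\mathbf S}(X_s)=\inf\mathbf J_{\mathbf S}$ a.s.\ for all $s$, contradicting coercivity. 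No generator domain, no symmetry of $\mathbf b_{\mathbf c}$, and no regularity of $\varphi$ beyond continuity are needed.
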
 

% Si $|\nabla \mathbf V_{\mathbf c}|$ has at most linear growth:
% - existence et unicité d'une solution forte : THM 2.2. p104 Friedman  vol 1
%  - Girsanov: appliquer Friedman + E[ exp  a \sup |B_s|^2 -> non attainiabilité de 0
% Pour t>0,  Growall-> P_z[\tau_{B(0,R)}\le t]\to 0 pour R\to +\infty et unifo en z \in K (tq  \sup_K|z|<R). Car pour tout t>0 fixé,  \sup_[0,t]|X_s| (z)\le C_{t,K}(1+\sup_[0,t]|B_s|)
% convergence en proba des trajectoires: facile (article generalized NH processes)
% le semi groupe de X_t est strong Feller: faire comme Liming en 2001 (cf. lapreuve dans le cas gamma>0 article generalized NH processes.
%  le semi groupe tué de X_t est Topo Irreductible. Facile avec girsanov: le brownien l'est
% on a bien que lim_s sup_K P_x(sigma_B\le s)=0
% $\lambda>  \inf_{\mathscr R^d} \mathbf J_{\mathbf S}$ car J_S non constant sur R^d et le processus visite tout R^d

The proof of  Proposition \ref{th.1-bis}  is a direct adaptation of Theorem \ref{th.oL} in a much simpler setting, and is therefore omitted. 
Note that  if  $\mathbf J_{\mathbf S}$  is only continuous and  lower bounded, then when   $\mathbf b_{\mathbf c}$  satisfies  \textbf{[c1]} (in this case the confinement at $\infty$ comes from the dynamics itself), the semigroup $(  S_t^{oL},t\ge 0)$ given by \eqref{eq.OL2} is $\mathbf W^{1/p}$ compact-ergodic over $\mathscr R^d$ (see  \eqref{eq.W} and \eqref{eq.alternativeW}), for all $p\in (1,+\infty)$.

%%%%

  \subsubsection{Main result for the second   model: a killed   Lévy  particle in a singular potential}

The second main result of this section concerns the  killed Feynman-Kac semigroup $(Q_t^{Le}, t\ge 0)$ defined in \eqref{eq.FK-B1}, where we recall    \eqref{eq.RD0}, \eqref{eq.No0-alpha}, \eqref{eq.Ty}, and that $\mathscr O$ is a subdomain of $\mathscr R^d_0$. 
 
 \begin{thm}\label{th.L} 
Assume  {\rm \textbf{[L1]}}, {\rm \textbf{[L2]}},  and {\rm \textbf{[S2]}}.  Then, $(Q_t^{Le},t\ge 0)$ is  compact-ergodic over $\mathscr O$ (see Definition \ref{de.2}). If $\mathscr R^d_0\setminus \overline{\mathscr O}$ is non-empty (in this case we assume in addition {\rm \textbf{[L3]}}) or  if  $\mathscr O=\mathscr R^d_0$, then $\lambda> \inf_{\mathscr O} \mathbf V_{\mathbf S}$ where $\lambda$
 is the principal eigenvalue of $(Q_t^{Le},t\ge 0)$   in   $ b\mathcal B(\mathscr O)$. 
 
  \end{thm}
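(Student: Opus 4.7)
The strategy parallels Case~2 of Theorem~\ref{th.oL}: since {\rm \textbf{[S2]}} provides confinement both at $0$ and at $\infty$, the natural Lyapunov weight is $\mathbf W \equiv \mathbf 1$, and the plan is to apply the Perron--Frobenius-type theorem of~\cite{guillinqsd} to the kernel $Q^{Le}_t$ acting on $b\mathcal B(\mathscr O)$, combined with the essential-spectral-radius scheme of Wu 2004. Three ingredients must be verified: (i) the strong Feller property of $Q^{Le}_t$, (ii) topological irreducibility on $\mathscr O$, and (iii) quasi-compactness with $\mathsf r_{ess}(Q^{Le}_t)<\mathsf r_{sp}(Q^{Le}_t)$.

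Ingredient (ii) is precisely assumption {\rm \textbf{[L2]}}. For (i), exploit {\rm \textbf{[L1]}}: split the integrand in $Q^{Le}_tf(x)=\mathbb E[f(x+L_t)\, e^{-\int_0^t \mathbf V_{\mathbf S}(x+L_s)\,ds}\, \mathbf 1_{t<\sigma^{Le}_{\mathscr O}}]$ at time $t/2$ and use the Markov property to represent $Q^{Le}_t f(x)$ as a convolution of $Q^{Le}_{t/2}f$ against the Lebesgue density $p_{t/2}$ of $L_{t/2}$; since the exponential functional is bounded by $e^{k_{\mathbf S}t}$, dominated convergence gives continuity of $x\mapsto Q^{Le}_tf(x)$ for every bounded Borel $f$.

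For (iii), observe that by {\rm \textbf{[S2]}} the sublevel sets $K_N:=\{x\in\mathscr O:\mathbf V_{\mathbf S}(x)\le N\}$ are bounded away from $\{0\}$ and from infinity, hence relatively compact in $\mathscr R^d_0$. The key estimate is
$$
\sup_{x\in\mathscr O\setminus K_N} Q^{Le}_t \mathbf 1(x) \longrightarrow 0 \qquad \text{as } N\to +\infty,
$$
which follows because, if $\mathbf V_{\mathbf S}(x)$ is very large, the continuity of $\mathbf V_{\mathbf S}$ and the right-continuity of the Lévy paths (cf.~\eqref{eq.Ty} for the non-approach of the singularity) force $\mathbf V_{\mathbf S}(L_s(x))$ to stay arbitrarily large on an initial interval of length bounded below in probability uniformly in such $x$. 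Together with the strong Feller property, this drift bound places $Q^{Le}_t$ in the framework of Wu 2004 and yields that $Q^{Le}_t$ is a compact operator on $b\mathcal B(\mathscr O)$, so that $\mathsf r_{ess}(Q^{Le}_t)=0<\mathsf r_{sp}(Q^{Le}_t)$.

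Applying the Perron--Frobenius theorem of~\cite{guillinqsd} then delivers a principal eigenvalue $\lambda$, a positive continuous principal eigenfunction $\varphi$, and a unique q.s.d. $\rho$ satisfying items~\textbf{1}--\textbf{3} of Definition~\ref{de.2}. The strict inequality $\lambda>\inf_{\mathscr O}\mathbf V_{\mathbf S}$ then follows as in Theorem~\ref{th.oL}: when $\mathscr R^d_0\setminus\overline{\mathscr O}$ is non-empty, {\rm \textbf{[L3]}} together with strong Feller and irreducibility propagates killing to a set of positive $\rho$-mass, forcing the decay rate strictly above $\inf \mathbf V_{\mathbf S}$; when $\mathscr O=\mathscr R^d_0$, strictness comes from the fact that $\varphi$ cannot be constant in the presence of the non-constant coercive potential $\mathbf V_{\mathbf S}$, so the eigenrelation $Q^{Le}_t\varphi=e^{-\lambda t}\varphi$ is incompatible with $\lambda=\inf \mathbf V_{\mathbf S}$. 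The main obstacle I anticipate is ingredient~(iii): unlike in Model~1, where Girsanov reduces everything to a Gaussian convolution, here all regularity must be extracted solely from {\rm \textbf{[L1]}}, and the càdlàg nature of the paths requires care in the drift estimate to prevent an instantaneous jump out of $\{\mathbf V_{\mathbf S}>N\}$.
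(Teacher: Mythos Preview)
Your overall architecture is correct---apply \cite[Theorem 4.1]{guillinqsd} with $\mathbf W=\mathbf 1$, check Feller, irreducibility, and a spectral gap via Wu's $\beta_w$ machinery---but two of your three ingredients contain genuine gaps.

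\medskip
\textbf{Strong Feller.} The Markov property gives
\[
Q^{Le}_t f(x)=\mathbb E_x\!\Big[e^{-\int_0^{t/2}\mathbf V_{\mathbf S}(L_s)\,ds}\,\mathbf 1_{t/2<\sigma^{Le}_{\mathscr O}}\;Q^{Le}_{t/2}f(L_{t/2})\Big],
\]
which is \emph{not} a convolution of $Q^{Le}_{t/2}f$ against $p_{t/2}$: the weight $e^{-\int_0^{t/2}\mathbf V_{\mathbf S}(x+L_s)\,ds}\mathbf 1_{t/2<\sigma^{Le}_{\mathscr O}}$ depends on $x$ through the whole path, so dominated convergence applied to the density $p_{t/2}$ alone cannot produce continuity in $x$. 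The paper proceeds differently (Theorem~\ref{th.SFG}): from {\rm \textbf{[L1]}} the plain semigroup is strong Feller; since $\sup_{s}|L_s(x_n)-L_s(x)|=|x_n-x|$, the uniform positivity \eqref{eq.Ty} gives $\int_0^t\mathbf V_{\mathbf S}(L_s(x_n))\,ds\to\int_0^t\mathbf V_{\mathbf S}(L_s(x))\,ds$ a.s.\ by dominated convergence. Combining this with \cite[Lemma~3.2]{wu1999} yields the strong Feller property of the non-killed $T^{Le}_t$, and then the translation-invariant small-time estimate $\mathbb P_x[\sigma^{Le}_{B(x,\delta)}\le s]=\mathbb P_0[\sigma^{Le}_{B(0,\delta)}\le s]\to 0$ transfers it to $Q^{Le}_t$ (Step~B of Theorem~\ref{th.SF1}).

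\medskip
\textbf{Compactness.} Your drift bound $\sup_{x\notin K_N}Q^{Le}_t\mathbf 1(x)\to 0$ is \emph{false} in general. Take $L$ to be Brownian motion in $d\ge 2$ and $\mathbf V_{\mathbf S}(y)=|y|^{-1}$ near $0$ (patched to satisfy {\rm \textbf{[S2]}}). Then $\mathbb E\!\big[\int_0^t|B_s|^{-1}ds\big]=c_d\int_0^t s^{-1/2}ds<\infty$, so by dominated convergence $\mathbb E_x[e^{-\int_0^t\mathbf V_{\mathbf S}(B_s)ds}]$ converges to a strictly positive limit as $x\to 0$. The obstacle you anticipate---an instantaneous escape from the high-potential region---really does occur. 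The paper avoids this by \emph{not} proving your drift bound: it shows only $\beta_w(T^{Le}_t)=0$, via the hitting-time splitting of Theorem~\ref{th.Ress}. Writing $\tau_{\mathscr K_0}=\inf\{s:L_s\in\mathscr K_0\}$ with $\mathscr K_0=\{\mathbf V_{\mathbf S}\le r_0\}$, on $\{t<\tau_{\mathscr K_0}\}$ the exponential is bounded by $e^{-r_0 t}$, while on $\{\tau_{\mathscr K_0}\le t\}$ one has $L_{\tau_{\mathscr K_0}}\in\mathscr K_0$ (closed set) and then the tightness assumption {\rm \textbf{[P$_{\rm traj}$]}}---which for L\'evy processes follows immediately from $|L_s(x_n)-L_s(x)|=|x_n-x|$ and \eqref{eq.Ty}---controls the mass outside any large compact $K$ uniformly over starting points in $\mathscr K_0$. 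This gives $\beta_w=0$ without ever asserting that $T^{Le}_t\mathbf 1(x)$ is small near the singularity.
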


 %%%
 The proof of Theorem \ref{th.L} is made in Section \ref{sec.alp}. 
 
 \subsubsection{Main result for the third model: a killed kinetic particle in a singular potential}

The third main result concerns the   killed Feynman-Kac semigroup \eqref{eq.FK-cin1} of  the kinetic Langevin process \eqref{eq.Lcin} with singular potentials, where we recall that $\mathscr D=\mathscr O\times \mathscr R^d$  and that $\mathscr O$ is a subdomain of $\mathscr R^d_0$ (see \eqref{eq.RD0}).   Recall also Lemma \ref{le.finiteLcin}.

   %%%
   
    \begin{thm}\label{th.kL} 
Assume that $-\nabla  \mathbf V_{\mathbf c}$ satisfies {\rm \textbf{[c1]}}. Assume also {\rm \textbf{[S1]}} or {\rm \textbf{[S2]}}.   Let $\mathbf W$ be  defined in \eqref{eq.Wkl} below. Then, for all $p\in (1,+\infty)$,  $(Q_t^{kL},t\ge 0)$ is $\mathbf W^{1/p}$ compact-ergodic over $\mathscr D$. If in addition   $\mathscr R^d_0\setminus \overline{\mathscr O}$ is non-empty or  if  $\mathscr O=\mathscr R^d_0$, then $\lambda_p>  \inf_{\mathscr O} \mathbf V_{\mathbf S}$ where $\lambda_p$
 is the principal eigenvalue of $(Q^{kL}_t,t\ge 0)$ in   $  b\mathcal B_{\mathbf W^{1/p}}(\mathscr D)$. 
   \end{thm}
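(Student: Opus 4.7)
The plan is to mirror the strategy of Theorem \ref{th.oL} (Case 1), replacing the elliptic tools by their hypoelliptic counterparts adapted to the kinetic Langevin process. Following the approach of Wu (2001), I would use a Lyapunov function of the form
$$\mathbf W(x,v)=\exp\bigl[\epsilon\bigl(\mathbf H(x,v)+\alpha\,\chi(x)\,v\cdot \mathbf G(x)\bigr)\bigr],$$
with $\mathbf H(x,v)=\mathbf V_{\mathbf c}(x)+\tfrac12|v|^2$, $\mathbf G(x)$ a bounded normalization of $\nabla \mathbf V_{\mathbf c}(x)$ (e.g.\ $\mathbf G=\nabla \mathbf V_{\mathbf c}/(1+|\nabla \mathbf V_{\mathbf c}|^2)^{1/2}$) chosen to keep the cross term controlled, $\chi$ a smooth cutoff vanishing on a neighborhood of $0$ and equal to $1$ outside $B(0,1)$, and $\epsilon,\alpha>0$ small. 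Applying $\mathscr L^{kL}=v\cdot\nabla_x-\nabla \mathbf V_{\mathbf c}\cdot\nabla_v-\gamma v\cdot\nabla_v+\tfrac12\Delta_v$ to $e^{\epsilon\mathbf H}$ alone produces only the factor $\epsilon|v|^2(\tfrac{\epsilon}{2}-\gamma)+\tfrac{d\epsilon}{2}$ (negative in $|v|$ for $\epsilon<2\gamma$), so the Talay/Wu cross term is needed to recover coercivity in $x$: its dominant contribution comes from $-\nabla \mathbf V_{\mathbf c}\cdot\nabla_v(v\cdot \mathbf G)=-\alpha\chi\,\nabla \mathbf V_{\mathbf c}\cdot \mathbf G$, which together with assumption \textbf{[c1]} on $-\nabla \mathbf V_{\mathbf c}$ forces $|\nabla \mathbf V_{\mathbf c}(x)|\to\infty$ and thus delivers coercivity as $|x|\to\infty$.

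With $\alpha\ll \epsilon$ chosen so that the Hessian contribution from $v\cdot\nabla_x(v\cdot \mathbf G)$ and the quadratic $|\nabla_v\mathbf W|^2$-terms in $\Delta_v\mathbf W$ are absorbed, a careful computation yields $\mathscr L^{kL}\mathbf W(\mathsf y)/\mathbf W(\mathsf y)\le C$ on $\mathscr R^{2d}$ and $\to -\infty$ as $|(x,v)|\to\infty$. Taking the $1/p$-th power only rescales $\epsilon$ into $\epsilon/p$, and subtracting $\mathbf V_{\mathbf S}/p$ then provides the missing coercivity as $|x|\to 0^+$ thanks to \textbf{[S1]} or \textbf{[S2]}, giving
$$\frac{(\mathscr L^{kL}-\mathbf V_{\mathbf S})\mathbf W^{1/p}}{\mathbf W^{1/p}}(\mathsf y)\longrightarrow -\infty\qquad\text{as $\mathsf y$ approaches the boundary of $\mathscr R^{2d}_0\cup\{\infty\}$.}$$
This is the Lyapunov-type input from which Wu's (2004) method produces compactness of $Q_t^{kL}:b\mathcal B_{\mathbf W^{1/p}}(\mathscr D)\to b\mathcal B_{\mathbf W^{1/p}}(\mathscr D)$ for every $t>0$ together with the sharp upper bound on the essential spectral radius.

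Two remaining ingredients close the argument: (i) topological irreducibility of $(Q_t^{kL})$ over $\mathscr D$, which follows from the classical controllability of the kinetic Langevin system (the Hörmander bracket $[\partial_{v_i},v\cdot\nabla_x]=\partial_{x_i}$ generates the full tangent space) combined with the Girsanov formula \eqref{eq.Girsanov2} to remove the drift $-\nabla \mathbf V_{\mathbf c}$; and (ii) a strong-Feller type property, obtained from the parabolic Hörmander regularity of the transition kernel of $(Y_t)$, together with Lemma \ref{le.finiteLcin} to ensure nonattainability of $\{x=0\}$. Plugging all these ingredients into the Perron-Frobenius theorem of Guillin-Nectoux-Wu (2020) for Feller kernels delivers existence and uniqueness of the q.s.d.\ $\rho_p\in\mathcal P_{\mathbf W^{1/p}}(\mathscr D)$, the principal eigentriple $(\lambda_p,\varphi_p,\rho_p)$ with $\varphi_p>0$ on $\mathscr D$, and the exponential convergence estimate of Definition \ref{de.2}. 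The strict inequality $\lambda_p>\inf_{\mathscr O}\mathbf V_{\mathbf S}$ is obtained exactly as in Theorem \ref{th.oL}: by positive killing probability when $\mathscr R^d_0\setminus\overline{\mathscr O}\neq\emptyset$, and, when $\mathscr O=\mathscr R^d_0$, by the fact that $\varphi_p$ must decay at $x\to 0$ since $\mathbf V_{\mathbf S}\to+\infty$ there, forcing the ground state to concentrate mass away from $\{\mathbf V_{\mathbf S}=\inf \mathbf V_{\mathbf S}\}$. The main obstacle is the Lyapunov computation itself: one must balance $(\epsilon,\alpha,\chi,\mathbf G)$ so that the cross term produces coercivity in $x$ without spoiling the good $-\gamma|v|^2$ sign in $v$, and verify that the localization near $0$ is harmless (which it is, because $\mathbf V_{\mathbf S}$ alone dominates there).
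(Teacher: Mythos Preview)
Your overall strategy is exactly the one the paper follows: build a Wu-type cross-term Lyapunov function $\mathbf W=e^{\mathsf F}$ with $\mathsf F=a\mathbf H+b\,v\cdot\mathsf G(x)$, show $\mathscr L^{kL}\mathbf W/\mathbf W\to -\infty$ as $|\mathsf y|\to\infty$, let $\mathbf V_{\mathbf S}$ supply the coercivity at $x\to 0$, and then feed everything into the general framework (Theorems~\ref{th.G} and~\ref{th.SFG}). Two of your concrete choices, however, do not work under the paper's standing hypotheses.

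First, your $\mathbf G=\nabla\mathbf V_{\mathbf c}/(1+|\nabla\mathbf V_{\mathbf c}|^2)^{1/2}$ has Jacobian $\nabla_x\mathbf G$ controlled by $|\nabla^2\mathbf V_{\mathbf c}|/(1+|\nabla\mathbf V_{\mathbf c}|)$, and nothing in the assumptions (only $\nabla\mathbf V_{\mathbf c}$ locally Lipschitz plus {\rm\textbf{[c1]}} for $-\nabla\mathbf V_{\mathbf c}$) bounds this ratio; for, say, $\mathbf V_{\mathbf c}(x)=e^{|x|^2}$ it grows like $|x|$. The cross term $v\cdot\nabla_x(\chi\, v\cdot\mathbf G)$ then contributes a quadratic in $v$ with unbounded coefficient, which no fixed choice of $\alpha$ can absorb into $-\gamma|v|^2$. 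The paper avoids this by taking $\mathsf G(x)=\dfrac{x}{|x|}(1-\chi(x))$ (see \eqref{eq.Wkl}): both $\mathsf G$ and $\nabla\mathsf G$ are globally bounded, and the good term becomes $b\,\mathsf G(x)\cdot\nabla\mathbf V_{\mathbf c}(x)=b(1-\chi(x))\,\dfrac{x}{|x|}\cdot\nabla\mathbf V_{\mathbf c}(x)\to+\infty$, which is exactly what {\rm\textbf{[c1]}} for $-\nabla\mathbf V_{\mathbf c}$ delivers. Second, invoking parabolic H\"ormander regularity for the strong Feller property requires smooth coefficients, whereas here $\nabla\mathbf V_{\mathbf c}$ is only locally Lipschitz; the paper instead transfers the strong Feller property from the free process $(x_t^0,v_t^0)$ via Girsanov~\eqref{eq.Girsanov2} and the energy-splitting argument of Theorem~\ref{th.SF1}, which needs no extra regularity. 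Finally, your justification of $\lambda_p>\inf_{\mathscr O}\mathbf V_{\mathbf S}$ in the case $\mathscr O=\mathscr R^d_0$ (``$\varphi_p$ must decay'') is heuristic; the paper's argument (Step~2.b in the proof of Theorem~\ref{th.oL}) is by contradiction: equality would force $\mathbf V_{\mathbf S}(x_s)\equiv\inf\mathbf V_{\mathbf S}$ along trajectories, which is ruled out by topological irreducibility and continuity of $\mathbf V_{\mathbf S}$.
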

  
The q.s.d. $\rho_p$ and the principal eigenvalue   $\lambda_p$  of $(Q^{kL}_t,t\ge 0)$ in   $  \mathcal P_{\mathbf W^{1/p}}(\mathscr D)$ do not depend on $p>1$. 
   The proof of Theorem \ref{th.kL} is made in Section \ref{sec.th.3}.

   \begin{note}\label{no.kL}
All the assertions of Theorem \ref{th.kL}  hold true without assuming that   $\nabla  \mathbf V_{\mathbf c}$  is locally Lipschitz, namely when $ \mathbf V_{\mathbf c}$ is only    continuously differentiable and $-\nabla  \mathbf V_{\mathbf c}$ satisfies {\rm \textbf{[c1]}}, see  Remark \ref{re.th3a}. 
   \end{note}
   
   %%%%

    \subsubsection{Main result for the last model: killed  Lévy  particles interacting through singular potential}
 Let       $(L^{ j}_t,t\ge 0)_{j=1,\ldots,n}$ be $n\ge 2$    independent $\mathscr R^d$-copies of a Lévy process satisfying \textbf{[L1]}. Let $\mathscr U$ be a subdomain of $\mathscr E$ and assume that the process \eqref{eq.theta} satisfies \textbf{[L4]}. Recall also Lemma \ref{le.Scc}. We then have the following result.

  \begin{thm}\label{th.Bn}
Assume    \eqref{eq.VCC} and \eqref{eq.HS2}.  Then, $(Q_t^\Theta,t\ge 0)$, see \eqref{eq.FK-Ll}, is  compact-ergodic over $\mathscr U$ (see Definition \ref{de.2}).  If $\mathscr E \setminus \overline{\mathscr U}$ is non-empty (in this case we assume {\rm \textbf{[L5]}}) or  if  $\mathscr U=\mathscr E$, then $\lambda> \inf_{\mathscr U} \mathbf U_{\mathbf S}$ where $\lambda$
 is the principal eigenvalue of $(Q_t^\Theta,t\ge 0)$   in   $ b\mathcal B(\mathscr U)$.
  \end{thm}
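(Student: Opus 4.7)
The plan is to view Theorem~\ref{th.Bn} as a multi-particle transcription of Theorem~\ref{th.L} under the {\rm \textbf{[S2]}}-like regime, where the role of the single-particle state space $\mathscr R^d_0$ and its singularity $\{0\}$ is played respectively by the configuration space $\mathscr E$ and its boundary $\partial\mathscr E = \bigcup_{i<j}\{x_i=x_j\}$. Indeed, $(\Theta_t,t\ge 0)$ is itself a Lévy process on $\mathscr R^{dn}$ that inherits {\rm \textbf{[L1]}} (its time-$t$ law is a product of densities), Lemma~\ref{le.Scc} guarantees that starting from $\mathscr E$ it stays in $\mathscr E$ for all times with full probability, and the potential $\mathbf U_{\mathbf S}$ is continuous and lower bounded on $\mathscr E$ with $\mathbf U_{\mathbf S}(\mathsf x)\to+\infty$ if and only if $\mathsf x\to\partial\mathscr E$ or $|\mathsf x|\to+\infty$. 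This is exactly the {\rm \textbf{[S2]}} scenario, translated to $\mathscr E$.

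To carry the reduction out, I would first endow $\mathscr E$ with a Polish metric that pushes its boundary to infinity, entirely analogous to \eqref{eq.DD}:
$$\mathsf d_{\mathscr E}(\mathsf x,\mathsf y) \;=\; |\mathsf x-\mathsf y| \;+\; \sum_{i<j}\bigl| |x_i-x_j|^{-1} - |y_i-y_j|^{-1} \bigr|.$$
The topology induced by $\mathsf d_{\mathscr E}$ on $\mathscr E$ coincides with the Euclidean one, but $(\mathscr E,\mathsf d_{\mathscr E})$ is complete, and $\partial\mathscr E$ plays the role of a single point at infinity. Next I would take the trivial Lyapunov function $\mathbf F = \mathbf 1$: since $\mathbf U_{\mathbf S}$ is itself coercive on $\mathscr E$, the effective generator satisfies $(\mathscr L^\Theta - \mathbf U_{\mathbf S})\mathbf 1 / \mathbf 1 = -\mathbf U_{\mathbf S} \to -\infty$ as $\mathsf x$ leaves compacts of $\mathscr E$. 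Through the essential-spectral-radius bounds of L.\ Wu (2004, Probab.\ Theory Relat.\ Fields) cited in the abstract, this Lyapunov inequality combined with the Feller continuity of the kernel (inherited from the translation invariance of $(\Theta_t)$ and the continuity of $\mathbf U_{\mathbf S}$ on $\mathscr E$) yields $\mathsf r_{ess}(Q_t^\Theta|_{b\mathcal B(\mathscr U)}) < \mathsf r_{sp}(Q_t^\Theta|_{b\mathcal B(\mathscr U)})$ and the compactness of $Q_t^\Theta$ on $b\mathcal B(\mathscr U)$.

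With compactness in hand, topological irreducibility {\rm \textbf{[L4]}} lets me invoke the Perron–Frobenius theorem of Guillin–Nectoux–Wu (2020 J.\ Eur.\ Math.\ Soc.) quoted in the abstract, producing the unique q.s.d.\ $\rho\in\mathcal P(\mathscr U)$, a strictly positive continuous principal eigenfunction $\varphi$, the principal eigenvalue $\lambda$, and the exponential convergence estimate of Definition~\ref{de.2} with $\mathbf F = \mathbf 1$. For the strict inequality $\lambda > \inf_{\mathscr U}\mathbf U_{\mathbf S}$: when $\mathscr E\setminus\overline{\mathscr U}$ is nonempty, {\rm \textbf{[L5]}} guarantees a strictly positive killing probability which prevents $\lambda$ from saturating the bottom of $\mathbf U_{\mathbf S}$; when $\mathscr U = \mathscr E$, the coercivity of $\mathbf U_{\mathbf S}$ supplies effective killing through the Feynman–Kac weight, and a standard Rayleigh-quotient argument rules out $\lambda = \inf\mathbf U_{\mathbf S}$ since it would force $\varphi$ to concentrate on a level set of $\mathbf U_{\mathbf S}$, contradicting its strict positivity and continuity on $\mathscr E$.

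The main obstacle I anticipate is the compactness step on $b\mathcal B(\mathscr U)$: unlike for a single diffusion, the Lévy kernel has no automatic smoothing in $\mathscr E$ near $\partial\mathscr E$, so one must genuinely combine the density assumption {\rm \textbf{[L1]}} (to dominate $Q_t^\Theta$ by a Lebesgue-type reference measure on every compact of $\mathscr E$) with the coercive Feynman–Kac weight $e^{-\int_0^t \mathbf U_{\mathbf S}(\Theta_s)ds}$ (to control mass escaping toward $\partial\mathscr E \cup \{\infty\}$, in the $\mathsf d_{\mathscr E}$ sense). Once this compactness is secured, the remainder of the argument is a faithful transcription to $(\mathscr E,\mathsf d_{\mathscr E})$ of the [S2] case of Theorem~\ref{th.L}, with $\mathbf U_{\mathbf S}$ and $\Theta$ in place of $\mathbf V_{\mathbf S}$ and $L$.
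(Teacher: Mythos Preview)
Your approach is essentially the same as the paper's: both treat $(\Theta_t)$ as a Lévy process on $\mathscr R^{dn}$ living in $\mathscr E$, take the trivial Lyapunov function $\mathbf 1$ (exploiting that $\mathbf U_{\mathbf S}$ is itself coercive on $\mathscr E$, i.e.\ the {\rm \textbf{[S2]}} regime), verify strong Feller and topological irreducibility via {\rm \textbf{[L1]}} and {\rm \textbf{[L4]}}, and then feed everything into the Wu~(2004) essential-spectral-radius machinery plus the Perron--Frobenius theorem of Guillin--Nectoux--Wu. The paper packages this through the abstract Theorems~\ref{th.G} and~\ref{th.SFG} rather than literally re-running the proof of Theorem~\ref{th.L}, but the content is the same, and your anticipated ``main obstacle'' (compactness near $\partial\mathscr E$) is exactly what the coercivity of $\mathbf U_{\mathbf S}$ plus {\rm \textbf{[P$_{{\rm traj}}$]}} handles in the paper via the tightness argument in Step~3a of Theorem~\ref{th.Ress}.

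The one soft spot is your argument for $\lambda > \inf_{\mathscr U}\mathbf U_{\mathbf S}$ when $\mathscr U=\mathscr E$. The ``Rayleigh-quotient'' reasoning you sketch---that equality would force $\varphi$ to concentrate on a level set---is heuristic and does not obviously go through in the $b\mathcal B$ setting without self-adjointness; moreover $\varphi>0$ everywhere by Perron--Frobenius, so ``concentration'' has no immediate meaning. The paper instead argues by contradiction on the q.s.d.\ $\rho$: if $\lambda=\inf\mathbf U_{\mathbf S}$ then $\rho(e^{-\lambda t}\mathbf 1 - Q_t^\Theta\mathbf 1)=0$, and since the integrand is continuous, nonnegative, and $\rho$ charges every open set, one gets $\mathbb E_{\mathsf x}[e^{-\int_0^t(\mathbf U_{\mathbf S}-\inf\mathbf U_{\mathbf S})(\Theta_s)ds}]=1$ for all $\mathsf x$, forcing $\mathbf U_{\mathbf S}(\Theta_s)=\inf\mathbf U_{\mathbf S}$ almost surely---which topological irreducibility and the non-constancy of $\mathbf U_{\mathbf S}$ rule out. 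You should replace your sketch by this argument (or an equivalent one that does not rely on an $L^2$ variational structure).
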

  
  The proof of  Theorem \ref{th.Bn}  is  made in Section \ref{sec.4}.

  \subsection{More general  Schrödinger  potentials}\label{sec.Ss}
  \begin{sloppypar}
   We chose for the   three first models    to deal with  a Schrödinger  potential  $\mathbf V_{\mathbf S}$ with a unique singularity   located at $0$. 
  As it is  clear from the proofs of the  main theorems above, one  can easily adapt our techniques to  Schrödinger  potential  $\mathbf V_{\mathbf S}$ having much more singularities (say a closed set $\mathscr C$)  as soon these singularities are  {nonattainable} for the \textit{càdlàg} process $(\mathfrak X_t,t\ge 0)$ we consider (namely $\mathbb P_{\mathfrak  z}[\exists t\ge 0, \mathfrak X_t \in \mathscr  C  \text{ or } \mathfrak X_{t^-} \in \mathscr  C]=0$ for all $\mathfrak z\notin \mathscr  C$), see e.g. \cite{friedman1976,kesten,morters} for examples of such sets $\mathscr C$.

   For instance,  consider      the vertical axis  $\mathscr C=\{(0,0,t), t\in \mathscr R\}$ in $\mathscr R^3$ and the potential (see \eqref{eq.HS2}):
     $$\mathbf C_{\mathbf S}: x \in   \mathscr  R^3 \setminus \mathscr C\mapsto   \mathbf v_{\mathbf S}({\rm dist}\, (x,\mathscr C)),$$ 
     where ${\rm dist}\, (x,\mathscr C)=(|x_1|^2+|x_2|^2)^{1/2}$, $x=(x_1,x_2,x_3)$.  
 When $\mathbf v_{\mathbf S}(y)=-\log y$ for  $0<y<r_0$, $\mathbf C_{\mathbf S}$ is the potential generated by   the infinite line $\mathscr C$ with a uniform charge density per unit length. 
 Let $(B^j_t,t\ge 0)_{j=1,2,3}$ be three independent  standard Brownian motions in $\mathscr R$ and set $B_t=(B^1_t,B^2_t,B^3_t)$. Note that $\mathbb P_x[\exists s\ge 0,  |x_1+B_s^1| +|x_2+B^2_s| =0]= \mathbb P_x[\exists s\ge 0,  -(B_s^1,B^2_s) =(x_1,x_2)]=0$ for all $x\in \mathscr  R^3 \setminus \mathscr C$. 
 Then, if moreover    \eqref{eq.VCC} holds,
  we have the following result, whose proof is an easy adaptation of the proof of Theorem \ref{th.oL}, and is therefore omitted.
 
\begin{thm}\label{th.Fil}
The (non-killed) semigroup 
     $$Q_t^{\mathscr C}f(x):= \mathbb E_x\Big[f(B_t)   \, e^{  - \int_0^t  (\mathbf C_{\mathbf S}+\mathbf V_{\infty})(B_s)ds}   \Big], \ \text{$t\ge 0$, $x\in \mathscr R^3 \setminus \mathscr C$, and $f\in b\mathcal B(\mathscr R^3 \setminus \mathscr C)$},$$
   is  compact-ergodic over $\mathscr R^3  \setminus \mathscr C$ and $\lambda >\inf_{\mathscr R^3  \setminus \mathscr C}(\mathbf C_{\mathbf S}+\mathbf V_{\infty})$.
   \end{thm}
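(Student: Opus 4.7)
The proof is a verbatim adaptation of Theorem \ref{th.oL} (Case 2), with the singular point $\{0\}$ replaced by the singular line $\mathscr C$, and with Lyapunov function $\mathbf W \equiv \mathbf 1$ since $\mathbf C_{\mathbf S}+\mathbf V_\infty$ alone provides confinement both at $\mathscr C$ and at infinity (this is the analog of \textbf{[S2]}, with \textbf{[c2]} holding trivially since the drift is zero).

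The first step is to set up the state space. Equip $\mathscr R^3 \setminus \mathscr C$ with the Polish metric
$$\mathsf d(x,y) := |x-y| + \big| {\rm dist}(x,\mathscr C)^{-1} - {\rm dist}(y,\mathscr C)^{-1} \big|,$$
analogous to \eqref{eq.DD}, which makes $\mathscr C$ play the role of a point at infinity while leaving the Euclidean topology on $\mathscr R^3 \setminus \mathscr C$ unchanged. Next I verify nonattainability of $\mathscr C$: the projection $s \mapsto (x_1 + B_s^1, x_2 + B_s^2)$ onto the plane orthogonal to $\mathscr C$ is a $2$-dimensional Brownian motion starting at $(x_1,x_2)\neq(0,0)$, and since points in $\mathscr R^2$ are nonattainable for $2$-D Brownian motion (exactly as invoked in Lemma \ref{le.finiteLsur}), one gets $\mathbb P_x[\exists s\ge 0,\, B_s\in\mathscr C]=0$ for all $x\in \mathscr R^3\setminus \mathscr C$. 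Path continuity then shows that for every $T>0$ there is a.s.\ $\epsilon>0$ with ${\rm dist}(B_s(x),\mathscr C)\ge \epsilon$ on $[0,T]$; combined with the coercivity of $\mathbf V_\infty$ and the local boundedness of $\mathbf v_{\mathbf S}$ away from $0$, this yields $\int_0^T (\mathbf C_{\mathbf S} + \mathbf V_\infty)(B_s)\,ds < +\infty$ a.s., so $Q_t^{\mathscr C}$ is a well-defined bounded kernel on $b\mathcal B(\mathscr R^3 \setminus \mathscr C)$.

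The second step is the Lyapunov condition. The generator of the Feynman-Kac semigroup is $\tfrac12 \Delta - (\mathbf C_{\mathbf S}+\mathbf V_\infty)$, and with $\mathbf W \equiv \mathbf 1$ one has
$$\frac{\bigl(\tfrac12\Delta - (\mathbf C_{\mathbf S}+\mathbf V_\infty)\bigr)\mathbf W}{\mathbf W}=-(\mathbf C_{\mathbf S}+\mathbf V_\infty) \longrightarrow -\infty$$
as $x\to \mathscr C$ (via blow-up of $\mathbf v_{\mathbf S}$ at $0$) or $|x|\to +\infty$ (via coercivity of $\mathbf V_\infty$), which is exactly the Lyapunov bound used in Case 2 of Theorem \ref{th.oL}. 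Topological irreducibility on $\mathscr R^3\setminus\mathscr C$ is immediate from the strict positivity of the Gaussian transition density on every nonempty open set, since there is no killing (once $\mathscr C$ is a.s.\ avoided). Plugging these into the Perron-Frobenius theorem for Feller kernels from the Guillin-Nectoux-Wu 2020 paper together with the essential spectral radius bounds from Wu 2004 yields compactness of $Q_t^{\mathscr C}$ on $b\mathcal B(\mathscr R^3\setminus\mathscr C)$, existence and uniqueness of a principal eigenpair $(\lambda,\varphi)$ with $\varphi>0$, a unique q.s.d.\ $\rho$, and exponential convergence of the renormalized semigroup to $\rho$. Finally, the strict inequality $\lambda>\inf_{\mathscr R^3\setminus\mathscr C}(\mathbf C_{\mathbf S}+\mathbf V_\infty)$ follows exactly as in Theorem \ref{th.oL}, Case 2, using that the principal eigenfunction of a non-degenerate diffusion cannot concentrate at the pointwise minimizers of the potential.

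\textbf{Expected main obstacle.} The only point requiring any care beyond copying the argument of Theorem \ref{th.oL} is the passage from a point singularity to a one-dimensional singularity when checking the Feller continuity of $Q_t^{\mathscr C}$ in the metric $\mathsf d$, together with the nonattainability of $\mathscr C$. The perpendicular-projection trick reduces the latter to the classical $2$-D point nonattainability, so no genuinely new difficulty arises; this is why the theorem is stated as an easy adaptation.
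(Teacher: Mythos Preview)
Your proposal is correct and matches the paper's intended approach: the paper explicitly omits the proof, stating it is ``an easy adaptation of the proof of Theorem \ref{th.oL}'', and your outline (nonattainability of $\mathscr C$ via the planar projection, Lyapunov function $\mathbf W\equiv\mathbf 1$ as in Case 2, strong Feller and irreducibility for Brownian motion, then the Perron--Frobenius/essential-spectral-radius machinery) is precisely that adaptation. Your description of the strict inequality $\lambda>\inf(\mathbf C_{\mathbf S}+\mathbf V_\infty)$ is slightly loose in phrasing --- the paper's argument in Step~2.b proceeds by contradiction, showing $\lambda=\inf$ would force $\mathbf V_*(B_s)=0$ a.s.\ for all $s$, contradicting irreducibility --- but this is what you are pointing to.
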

  One can also add a killing (outside some subdomains  of $\mathscr R^3  \setminus \mathscr C$) in the previous theorem.    
We finally mention that the previous result is just an example and one  can also choose more complicated singular potentials. 
  \end{sloppypar}

   %%%%%%

\subsection{Related results}
\label{sec.RR}
 Feynman-Kac semigroups   appear in many fields of science ranging e.g. from statistical physics,  engineering science,  nonlinear filtering,  and genealogical tree  models, see the classical textbook~\cite{del2004feynman}, and the  related mathematical literature is very rich. Such semigroups   have also   a strong connection with   Schrödinger operators  and non linear Fokker–Planck equations~\cite{nagasawa2012stochastic,del2003particle,del2004feynman}. 
To discuss the related literature, we adopt the following notation. Let us consider  a potential $\mathbf V_\star$ and a process $(Z_t,t\ge 0)$, we denote by    $Q_tf(x)=\mathbb E_x [ f (Z_t)   \, \exp(  -\int_0^t  \mathbf V_\star (Z_s)ds)  ]$ its associated (non-killed) Feynman-Kac semigroup.  

 Related to our first model are the following contributions.
When  $(Z_s,s\ge 0)$    is a  Brownian motion (this is our second model when $\alpha=2$), the   generator of the non-killed Feynman-Kac semigroup     is the Schrödinger operator  $\Delta-\mathbf V_\star$, and we refer to the classical textbooks \cite{Kato,CFKS,nagasawa2012schrodinger,simon2015quantum,Yosida,chung2001brownian} for the spectral study of  this operator.  In~\cite{rousset2006control},  the  interacting particle model   introduced  in~\cite{del2000branching} is used  to approximate, via Feynman–Kac formulas,   the principal eigenvalue of Schrödinger type operators as well as its associated eigenfunction.  In \cite{ferre2}, the authors  studied the long time behavior of (non-killed) Feynman-Kac semigroups for  solutions to elliptic diffusions with smooth coefficients and with non singular  Schrödinger  potentials. We   mention that in \cite{ferre2}, the authors are also concerned by the study of   discretized (in time) Feynman-Kac semigroups.  In~\cite{collet2024branching}, the authors  studied in a one-dimensional setting the long time properties of a non-killed  Feynman-Kac  semigroup   related to  linear functionals of branching diffusion processes when  the potential $\mathbf V_\star$ is non singular.  Note that Proposition \ref{th.1-bis} provides  a similar result as~\cite[Theorem 2.1]{collet2024branching} in any dimension and without assumption  on the growth of the  Schrödinger  potential $\mathbf J_{\mathbf S}$ at infinity.

 Our second model fits into the following literature.
When  $(Z_s,s\ge 0)$    is a (Lévy) rotationally invariant $\alpha$-stable process with $\alpha\in (0,2)$, the   generator of the non-killed Feynman-Kac semigroup $Q$     is   $-(-\Delta)^{\alpha/2}-\mathbf V_\star  $ where  $(-\Delta)^{\alpha/2}$ is the fractional Laplacian. When $\textbf V_\star$ is non negative and locally bounded, the spectral properties in $L^2(\mathscr R^d)$ of  such a semigroup have been investigated   in~\cite{kaleta2010intrinsic}  (see also the references therein) and the same investigation has been carried out    in~\cite{kulczycki2006intrinsic}      in the spaces $L^p(\mathscr R^d)$  when  $(Z_s,s\ge 0)$  is  a relativistic stable process  (see also~\cite{chen2000intrinsic,chen2015intrinsic,chen2016intrinsic,ascione2024bulk}). We  refer to~\cite{kaleta2015pointwise} for a   spectral analysis in $L^p(\mathscr R^d)$ of $L^2(\mathscr R^d)$- (non-killed) symmetric Feynman-Kac semigroups associated with some symmetric Lévy processes\footnote{Satisfying  in particular  \textbf{[L1]}  and other regularity conditions on the Green function.} and   when the Schrödinger potential belongs to the Kato class (see \eqref{eq.Kato}), see also \cite{daubechies1983one,carmona1990relativistic,guneysu2011feynman}.  We mention that in the papers quoted just above, the authors are   also concerned with the  precise study of the eigenfunctions and to  contractivity-type properties. In this direction, we also refer to~\cite{vsikic2006potential,bogdan-book,chen2011heat} and references therein. We also mention the recent work \cite{kaleta2023quasi} for the study  of the quasi-ergodicity  in $L^2(\mathscr M)$ of ultracontractive semigroups on   locally compact Polish space $\mathscr M$  with various applications to (non-killed) Feynman-Kac semigroups with locally bounded, lower bounded, and confining Schrödinger potential.

 In conclusion, regarding our first and our second model, we mostly differ from the aforementioned works  in three ways. First, our approach is based on   the Lyapunov type  framework introduced in~\cite{guillinqsd} and our mathematical setting  is that of the space of bounded measurable functions. Second, the class of  singular  Schrödinger    potentials we consider is very large (e.g. we do    not restrict ourselves to Schrödinger  potentials in the Kato class).  Indeed, we recall that any singular potential can be considered as long as its singularity is not reachable by the  process (see   Section \ref{sec.Ss}). Lastly, the aforementioned works  do not consider hard killing. Let us mention that another advantage of our techniques is that they allow us    to make no assumptions about the regularity of the domain outside of which the process is killed. Finally, to the best of our knowledge, the long time behavior of the killed Feynman-Kac semigroups of the third and the fourth models had not   yet been considered in the literature.
 %, and  Theorems \ref{th.kL}  and \ref{th.Bn}     are new as well as Theorem \ref{th.Fil}.  }

 The stability and the contraction properties of  non-killed Feynman Kac semigroups as well as  their large time behavior of various models ranging from physics, biology, evolutionary computing, and nonlinear filtering   have also been investigated in~\cite{del2002stability,del2004feynman,chopin2011stability}  (see also \cite{del2018sharp}).    
  In \cite{ferre2}, the authors also provide several conditions   to obtain the long time convergence of non-killed Feynman-Kac  semigroups. Among these conditions is the Lyapunov criterion  \cite[Assumption 4: Eqs (31) and (32)]{ferre2}.  When working on the set  $ \mathscr M$ where the singular Schrödinger potential  is finite, \cite[Assumption 4]{ferre2} has now to be satisfied when $\mathfrak z\to  \partial \mathscr M \cup \{\infty\}$, $\mathfrak z\in \mathscr M$. It turns out that~\cite[Equation (32)]{ferre2} is actually too challenging for $\mathfrak z\to  \partial \mathscr M$. For that reason, and inspired by the work~\cite{guillinqsd,guillinqsd2}, we prove using the tools introduced in \cite{Wu2004}, that is enough to only work with the    Lyapunov condition~\cite[Equation (31)]{ferre2} adapted to the singular setting, see indeed Theorems \ref{th.Ress} and~\ref{th.G} below.    We also refer to~\cite{champagnatALEA,del2018exponential} for related results.  
%Less important,  we   also mention that we will not need in our proofs that the density of the Feynman-Kac semigroup  is everywhere positive as it is assumed in \cite[Assumption 5]{ferre2}.
In this context, we  mention~\cite{champagnat2017general,CloezHarris} for other  general criteria to study the long-time behavior of non-conservative semigroups with applications to quasi-stationary distributions and growth-fragmentation semigroups, see also \cite{MeynFK2,del2023stability}. These different techniques can also be used to study Feynman-Kac semigroups. 
 Finally,  let us mention that quasistationarity has applications in  Monte Carlo inference problems. We refer to~\cite{pollock2020quasi,wang2019theoretical} where quasi-limiting convergence is used as the basic ingredient to build efficient algorithms  to sample from a distribution of interest $\pi$ (typically  the Bayesian posterior distribution). More precisely, given a diffusion process $(\zeta_t,t\ge 0)$, the main idea there is to construct a suitable  (soft) killing time $\tau_\kappa$ with   rate $\kappa$ (see~\cite[Eq. (1.9)]{wang2019theoretical}) such that $\mathbb P[\zeta_t\in \cdot | t<\tau_\kappa]\to \pi(\cdot)$. 
%%%

% Feynman-Kac path distributions, interacting particle systems, and genealogical tree based models.

%wu1994feynman
%del2002stability

   %%%%

   \section{Proof of the main results}
 \label{sec.proof}

 \subsection{Preliminary regularity results on $Q_t^{oL}$ and proof of Theorem \ref{th.oL}}
 \label{sec.Sec-th1}

 %The second case is an easy adaptation where the Lyapunov condition is satisfied with the constant function $\mathbf 1$. 
 
 %%%%%

  In this section we prove Theorem \ref{th.oL}.   We start   by giving some regularity  and some spectral properties of the  transition kernel $Q^{oL}_t$, this is the purpose of Section \ref{sec.rp}. 
 To prove Theorem \ref{th.oL}, we will rely on  a Perron-Frobenius type theorem  \cite[Theorem 4.1]{guillinqsd} with the Lyapunov  function defined by, for $p>1$, 
\begin{equation}\label{eq.Ww}
\mathbf W_\star:=\mathbf W^{1/p},
\end{equation}
where $\mathbf W$ is defined in \eqref{eq.W}. For the reader's convenience, we recall \cite[Theorem 4.1]{guillinqsd}  in Section \ref{sec.thm41}. 
Moreover,  we will use the  tools developed in \cite{Wu2004}  to check the spectral gap condition \eqref{eq.SGap}.  These tools are introduced in   Section \ref{sec.bw}. In Section \ref{sec.L3}, we then show that the operator  $T_t^{oL}$ has a  spectral gap. Theorem \ref{th.oL} is finally proved in Section \ref{sec.thm1-p}.  In all this section, we recall that we consider the process $(X_t,t\ge 0)$ solution to \eqref{eq.Lsur} (see Proposition~\ref{pr.Pre}). 
Recall that  the infinitesimal generator of the diffusion \eqref{eq.Lsur} is $\mathscr L^{oL}= \mathbf b_{\mathbf c}\cdot \nabla + \frac 12 \Delta$.

%%%
 
 \subsubsection{Properties of the transition kernel $Q^{oL}_t$}
  \label{sec.rp}
 
 %%%
  \begin{prop}\label{pr.c2}
 Assume  {\rm \textbf{[c1]}} or {\rm \textbf{[c2]}}.  Then, for all   $t\ge 0$ and  all sequence $(x_n)_n$ in $\mathscr R^d$ such that $x_n\to x\in \mathscr R^d$ as $n\to +\infty$, it holds for all $\epsilon>0$, as $n\to +\infty$
  $$\mathbb P  [\sup_{s\in [0,t]}| X_s( x_n)-X_s( x)|\ge \epsilon   ]\to 0.$$ 
 \end{prop}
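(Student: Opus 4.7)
The strategy is a classical pathwise coupling argument combined with Grönwall's lemma, localized to handle the fact that $\mathbf b_{\mathbf c}$ is only locally Lipschitz. The key point is that both $(X_s(x_n),s\ge 0)$ and $(X_s(x),s\ge 0)$ are constructed on the same probability space driven by the same Brownian motion $(B_s,s\ge 0)$ (Proposition~\ref{pr.Pre}), so their difference has no stochastic-integral part and satisfies a purely deterministic integral inequality along each path.

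For a large parameter $R>0$, set
\[
\mathscr W_R:=B(0,R)\text{ under }\textbf{[c2]},\qquad \mathscr W_R:=\{y\in\mathscr R^d:\mathbf W(y)<R\}\text{ under }\textbf{[c1]},
\]
where $\mathbf W$ is the Lyapunov function defined in \eqref{eq.W}, and define the coupling exit time $\tau_R^{(n)}:=\sigma^{oL}_{\mathscr W_R}(x_n)\wedge \sigma^{oL}_{\mathscr W_R}(x)$. Since $\mathbf b_{\mathbf c}$ is locally Lipschitz and $\overline{\mathscr W_R}$ is compact, there exists $L_R>0$ such that $|\mathbf b_{\mathbf c}(y)-\mathbf b_{\mathbf c}(z)|\le L_R|y-z|$ for all $y,z\in\overline{\mathscr W_R}$. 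Subtracting the integrated forms of \eqref{eq.Lsur} for $x_n$ and $x$, we obtain, for every $s\le t\wedge\tau_R^{(n)}$,
\[
|X_s(x_n)-X_s(x)|\le |x_n-x|+L_R\int_0^s|X_u(x_n)-X_u(x)|\,du,
\]
so Grönwall's lemma gives the pathwise bound $\sup_{s\in[0,t\wedge\tau_R^{(n)}]}|X_s(x_n)-X_s(x)|\le |x_n-x|\,e^{L_Rt}$.

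Consequently,
\[
\mathbb P\Bigl[\sup_{s\in[0,t]}|X_s(x_n)-X_s(x)|\ge\epsilon\Bigr]\le\mathbb P[\tau_R^{(n)}\le t]+\mathbf 1_{\{|x_n-x|e^{L_Rt}\ge\epsilon\}}.
\]
To close the argument I invoke the exit-time estimates already established in the proof of Proposition~\ref{pr.Pre}: under \textbf{[c2]}, \eqref{eq.energy2} ensures that $\mathbb P[\sigma^{oL}_{B(0,R)}(y)\le t]\to 0$ as $R\to+\infty$, uniformly for $y$ in the compact set $\{x\}\cup\{x_n, n\ge 1\}$; under \textbf{[c1]}, \eqref{eq.energy} combined with $\sup_n\mathbf W(x_n)<+\infty$ (by continuity of $\mathbf W$) gives $\sup_n\mathbb P[\sigma^{oL}_{\mathscr W_R}(x_n)\le t]\le e^{ct}\sup_n\mathbf W(x_n)/R\to 0$. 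Given $\delta>0$, I first pick $R$ so large that $\sup_n\mathbb P[\tau_R^{(n)}\le t]\le\delta$, then take $n_0$ so large that $|x_n-x|e^{L_Rt}<\epsilon$ for $n\ge n_0$, which yields the claim.

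The main obstacle---minor in the end---is that the Grönwall inequality only provides a bound up to the coupling exit time $\tau_R^{(n)}$, because $\mathbf b_{\mathbf c}$ fails to be globally Lipschitz. Overcoming it requires controlling $\mathbb P[\tau_R^{(n)}\le t]$ \emph{uniformly} in $n$, which is precisely what the Lyapunov estimate \eqref{eq.energy} (or the linear-growth estimate \eqref{eq.energy2}) delivers, given that a convergent sequence $(x_n)$ is confined to a compact set on which $\mathbf W$ is bounded.
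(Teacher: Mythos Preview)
Your argument is correct and follows essentially the same route as the paper, which does not spell out the details but refers to \cite[Proposition~2.2]{guillinqsd3} and explicitly names the two ingredients you use: the local Lipschitz property of $\mathbf b_{\mathbf c}$ and the exit-time estimates \eqref{eq.energy}--\eqref{eq.energy2}. Your localized Gr\"onwall coupling is precisely the standard mechanism behind that reference.
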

 
 %%%

 \begin{proof}
 Since the coefficients in \eqref{eq.Lsur} are locally Lipschitz, using in addition \eqref{eq.energy} and \eqref{eq.energy2}, the proof of the proposition is the same as the one made for Proposition 2.2 in \cite{guillinqsd3}.  
 \end{proof}
 
 %%%%%
 Let $\mathscr O$ be a subdomain of $\mathscr R^d_0$.

 %%%%%

  \begin{thm}\label{th.SF1}
 Assume  {\rm \textbf{[c1]}} or {\rm \textbf{[c2]}} and assume also    {\rm \textbf{[S1]}} or {\rm \textbf{[S2]}}.  Then,   $Q^{oL}_t$ is strongly Feller over $\mathscr O$ for all $t>0$, i.e. $Q^{oL}_t(b\mathcal B(\mathscr O))\subset \mathcal C_b(\mathscr O)$. 
   \end{thm}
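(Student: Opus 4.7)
My plan is to realize $Q^{oL}_t$ as an integral operator whose kernel is continuous in $x$ in $L^1_y$, and then conclude strong Feller by dominated convergence. First, using the Girsanov formula from Proposition~\ref{pr.Pre}, I would rewrite
\[
Q^{oL}_t f(x) = \mathbb E\Big[f(x+B_t)\, M^x_t \,\mathbf 1_{t < \bar\sigma(x)}\Big],
\]
with $M^x_t := m^0_t(x+B_\cdot)\exp\big(-\int_0^t \mathbf V_{\mathbf S}(x+B_s)\,ds\big)$ and $\bar\sigma(x) := \inf\{s\ge 0 : x+B_s \notin \mathscr O\}$. Under {\rm\textbf{[c1]}} or {\rm\textbf{[c2]}}, the Doléans--Dade true-martingale property of $m^0_t$ combined with the lower bound $\mathbf V_{\mathbf S}\ge -k_{\mathbf S}$ makes $M^x_t$ nonnegative and bounded in $L^p(\mathbb P)$ locally uniformly in $x$, for every $p\in(1,\infty)$.

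Next, conditioning on $B_t$ and writing $\phi_t$ for its Gaussian density, the tower property would give
\[
Q^{oL}_t f(x) = \int_{\mathscr O} f(y)\,\phi_t(y-x)\,\Psi_t(x,y)\,dy, \qquad \Psi_t(x,y):= \mathbb E\big[M^x_t \mathbf 1_{t<\bar\sigma(x)} \,\big|\, B_t = y-x\big].
\]
Under this conditioning, $s\mapsto x+B_s$ becomes a Brownian bridge $\beta^{x,y}$ from $x$ to $y$ on $[0,t]$ admitting the explicit realization $\beta^{x,y}_s = x + s(y-x)/t + W_s - (s/t)W_t$ for an auxiliary standard Brownian motion $W$. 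This realization is jointly continuous in $(x,y)$ uniformly in $s\in[0,t]$ for a.e.\ $W$, so both the potential factor and the Girsanov density in $M^x_t$ inherit continuity on the event that the bridge stays in $\mathscr R^d_0$; this event has full probability because $d\ge 2$ makes $\{0\}$ polar for the bridge, by the same argument as in Lemma~\ref{le.finiteLsur} applied pathwise to $\beta^{x,y}$.

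The remaining step is to handle the killing indicator $\mathbf 1_{t<\bar\sigma(x)}$ along the bridge. I would argue that for Lebesgue-a.e.\ $y\in\mathscr O$, the bridge $\beta^{x,y}$ a.s.\ either remains in $\mathscr O$ on $[0,t]$ or exits $\mathscr O$ transversally, so that $x\mapsto\bar\sigma(x)$ is continuous at those $\omega$. Once this is in place, dominated convergence with the majorant $\|f\|_\infty e^{k_{\mathbf S} t}\phi_t(y-x)\,\mathbb E\big[m^0_t(x+B_\cdot)\,\big|\,B_t=y-x\big]$, which is locally uniformly integrable in $y$ by Girsanov, delivers $Q^{oL}_t f(x_n)\to Q^{oL}_t f(x)$ whenever $x_n\to x$ in $\mathscr O$. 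The main obstacle is precisely this transversality when $\partial \mathscr O$ is highly irregular; in that generality I anticipate exhausting $\mathscr O$ by subdomains with nicer boundaries, leveraging the Lyapunov function $\mathbf W$ from \eqref{eq.W} to provide tightness that lets the strong-Feller property pass through the exhaustion uniformly on compacts.
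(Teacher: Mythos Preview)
Your proposal has two genuine gaps.

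First, the claim that $M^x_t$ is in $L^p(\mathbb P)$ locally uniformly in $x$ for every $p>1$ is not justified under {\rm\textbf{[c1]}}. That assumption imposes no growth bound on $|\mathbf b_{\mathbf c}|$: for instance $\mathbf b_{\mathbf c}(x)=-x|x|^{2k}$ satisfies {\rm\textbf{[c1]}} for any $k\ge 1$, and then $\mathbb E\big[\exp\big(c\int_0^t|\mathbf b_{\mathbf c}(x+B_s)|^2\,ds\big)\big]=+\infty$ for every $c>0$, so the Dol\'eans--Dade exponential $m^0_t$ need not lie in any $L^p$ with $p>1$. Without this, your dominating majorant and your local uniform integrability argument collapse. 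The paper never needs higher moments of $m^0_t$: it only uses that $m^0_t(x_n)\to m^0_t(x)$ in $L^1$ (a consequence of the martingale property and Vitali), together with Wu's lemma on convergence in probability for strongly Feller kernels, to get $f(X_t(x_n))\to f(X_t(x))$ in probability directly.

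Second, and more seriously, your handling of the killing indicator is exactly the hard part and your sketch does not close it. For an arbitrary open set $\mathscr O$ with no boundary regularity, the ``transversal exit'' picture for the bridge is simply false in general, and the exhaustion idea you allude to would still require you to show $\sup_{z\in K}\mathbb P_z[\sigma^{oL}_{\mathscr O_n}\le t<\sigma^{oL}_{\mathscr O}]\to 0$, which is a nontrivial statement you have not addressed. The paper sidesteps all of this with a clean trick: by the Markov property,
\[
Q^{oL}_t f(z)=T^{oL}_s\varphi_s(z)+O\big(\mathbb P_z[\sigma^{oL}_{\mathscr O}\le s]\big),\qquad \varphi_s:=Q^{oL}_{t-s}f\in b\mathcal B(\mathscr O),
\]
where $T^{oL}_s$ is the \emph{non-killed} Feynman--Kac semigroup on $\mathscr R^d_0$. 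One first proves $T^{oL}_s$ is strongly Feller (no boundary enters), so $T^{oL}_s\varphi_s$ is continuous; then one shows $\sup_{z\in K}\mathbb P_z[\sigma^{oL}_{\mathscr O}\le s]\to 0$ as $s\to 0^+$ using only local Lipschitzness and the energy estimate \eqref{eq.energy}. Hence $Q^{oL}_t f$ is a locally uniform limit of continuous functions. This decoupling of the Feller property from the killing is the key idea you are missing.
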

 \begin{proof}
 Let $t>0$   and $f\in b\mathcal B(\mathscr O)$. The goal is to show that
  \begin{equation}\label{eq.C0-killed}
 z\in \mathscr O \mapsto Q^{oL}_tf(z)=   \mathbb E_z\Big[f(X_t)   \, e^{  - \int_0^t  \mathbf V_{\mathbf S}(X_s)ds} \mathbf 1_{t<\sigma^{oL}_{\mathscr O}}  \Big] \text{ is continuous}.
 \end{equation}
  We divide the proof of \eqref{eq.C0-killed} into two steps. The first step (\textbf{Step A}) consists in proving that the non-killed Feynman-Kac semigroup is strongly Feller (see \eqref{eq.C0} below). The second one (\textbf{Step B}) is dedicated to the proof of \eqref{eq.C0-killed}. The proof of \eqref{eq.C0-killed} relies on the following three ingredients:  \eqref{eq.C0}, the Markov property of the process, and the (uniform w.r.t. to initial conditions) continuity of the cumulative distribution function of  $\sigma^{oL}_{\mathscr O}$ at $0^+$, see \eqref{eq.contu} below.
 \medskip

 \noindent
 \textbf{Step A}. 
 The purpose of this step is to prove that the non-killed Feynman-Kac semigroup $(T^{oL}_t,t\ge 0)$  is strongly Feller\footnote{Note that, as the product of two strong Feller kernels,  \eqref{eq.C0} implies that $ T_t^{oL}$ is strong Feller in the strict sense~\cite{Revuz1976}.} over $\mathscr R_0^d$, i.e.   for all $t>0$   and $f\in b\mathcal B(\mathscr R_0^d)$,
  \begin{equation}\label{eq.C0}
 z\in \mathscr R^d_0  \mapsto  T^{oL}_tf(z):=  \mathbb E_z\Big[f(X_t)   \, e^{  - \int_0^t  \mathbf V_{\mathbf S}(X_s)ds}   \Big] \text{ is continuous}.
 \end{equation}
 To prove \eqref{eq.C0}, we first show in  \textbf{Step A.1} that the (non-killed) semigroup of $(X_t,t\ge 0)$ is strongly Feller (see \eqref{eq.C1} below). We then prove the continuity property of $x\mapsto f(X_t(x))$ in  \textbf{Step A.2} (see \eqref{eq.pSs} below). In view of \eqref{eq.C0}, it thus remains to study the continuity of   the functional $x\mapsto e^{  - \int_0^t  \mathbf V_{\mathbf S}(X_s(x))ds}$, this is the purpose of \textbf{Step A.3}.
   \medskip

 \noindent
 \textbf{Step A.1.} In this step we prove that   for all   sequence $(x_n)_n$ in $\mathscr R^d$ such that $x_n\to x\in \mathscr R^d$ as $n\to +\infty$, it holds  for all $f\in b\mathcal B(\mathscr R^d)$,
 \begin{equation}\label{eq.C1}
 \mathbb E_{x_n}[f(X_t)]\to  \mathbb E_{x}[f(X_t)],
 \end{equation}
 namely   $U^{oL}_t$  is strongly Feller over $\mathscr R^d$, where $U^{oL}_tf(x):=\mathbb E_{x}[f(X_t)]$. 
Note that if $\mathbf b_{\mathbf c}$ is smooth, \eqref{eq.C1} follows from the standard elliptic theory. Since it is not \textit{a priori} the case, we argue differently. They are probably many ways to prove  \eqref{eq.C1} in the weak setting where the drift $\mathbf b_{\mathbf c}$ is only locally Lipschitz, and  we use the \textit{energy splitting method} introduced in \cite[Section 2.1]{guillinqsd3} whose starting point is the simple equality
\begin{equation}\label{eq.es}
 \mathbb E_y[f(X_t)]= \mathbb E_y[f(X_t)\mathbf 1_{  \sigma^{oL}_{\mathscr W_R} \le t }]+\mathbb E_y[f(X_t)\mathbf 1_{t<  \sigma^{oL}_{\mathscr W_R} }], \ y \in \mathscr R^d.
 \end{equation}
 
Let $K$ be a compact subset of $\mathscr R^d$ containing the $x_n$'s and $x$ and such that $K\subset \mathscr W_R$   for all $R>R_x$, for some $R_x>0$  (where we recall that $\mathscr W_R=\{y\in \mathscr R^d, \mathbf W(y)< R\}$, see just after \eqref{eq.energy}). By \eqref{eq.energy}, 
\begin{equation}\label{eq.high-k}
\lim_{R\to +\infty} \sup_{x\in K} \mathbb E_x[f(X_t)\mathbf 1_{  \sigma^{oL}_{\mathscr W_R} \le t }]=0.
 \end{equation}
Let us now prove that for any  fixed $R>R_x$, 
\begin{equation}\label{eq.SF-k}
 x \in \mathscr W_R\mapsto \mathbb E_x[f(X_t)\mathbf 1_{t<\sigma^{oL}_{\mathscr W_R}}]  \text{ is continuous}.
 \end{equation}
Let us now consider a fixed $R>R_x$ and a globally Lipschitz and bounded vector field $\boldsymbol a:\mathscr R^d\to \mathscr R$ such that $\boldsymbol a=\mathbf b_{\mathbf c}$ on the closure of $\mathscr W_R$. Consider the solution $(Z_t,t\ge 0)$ to  $dZ_t= \boldsymbol a(Z_t)+ dB_t$. Since the   two processes $(X_t,t\ge 0)$ and $(Z_t,t\ge 0)$  coincide up to their first exit time from  $\mathscr W_R$, we then have for all $y\in \mathscr W_R$, 
\begin{equation}\label{eq.SF==}
\mathbb E_y[f(X_t)\mathbf 1_{t<\sigma^{oL}_{\mathscr W_R}}]=\mathbb E_y[f(Z_t)\mathbf 1_{t<\sigma^Z_{\mathscr W_R}}],
 \end{equation}
where $\sigma^Z_{\mathscr W_R}=\inf\{t\ge 0, Z_t\notin \mathscr W_R\}$. 
Note that since $\boldsymbol a$ is globally Lipschitz, Proposition \ref{pr.c2} also holds for the process $(Z_t,t\ge 0)$. Moreover, using again that $\boldsymbol a$ is globally Lipschitz,   the transition density $p^*_t(x,y) dy$ of the process  $(Z_t,t\ge 0)$ satisfies a  gaussian upper bound ~\cite{baldi} and hence,    the sequence  $(p^*_t(x_n,y))_n$ is  uniformly integrable  w.r.t.   a fixed probability measure. Using    \cite[Lemma 3.2]{wu1999}, we finally deduce that for all $f\in b\mathcal B(\mathscr R^d)$:
\begin{equation}\label{eq.SFY}
 f(Z_t(x_n))\to  f(Z_t(x)) \text{ in $\mathbb P$-probability}.
 \end{equation}
 In addition, using the same arguments as those used in the proof of \cite[Lemma 2.5]{guillinqsd3} (see also its note there), 
 it holds  for all  $\delta>0$ and all compact subset $ K$ of $\mathscr R^d$,  
 $$\lim_{s\to 0^+} \sup_{y\in K}\mathbb P_{y}[ \sigma^Z_{B(y,\delta)}\le s]=0,$$
 where   $\sigma^Z_{B(x,\delta)}$ is the first exit time from the open ball $B(x,\delta)$ for the process $(Z_t,t\ge 0)$. Together with \eqref{eq.SFY}, one therefore finally gets, with the same arguments as those used at the end of the proof of \cite[Theorem 2.6]{guillinqsd3}, that the mapping 
 $$y \in \mathscr R^d\mapsto \mathbb E_y[f(Z_t)\mathbf 1_{t<\sigma^Z_{\mathscr W_R}}] \text{  is continuous.}$$ Thanks to \eqref{eq.SF==},  we deduce that \eqref{eq.SF-k} holds. Finally, Equations \eqref{eq.es}, \eqref{eq.high-k},  and \eqref{eq.SF-k} imply \eqref{eq.C1}. 
  \medskip

 \noindent
 \textbf{Step A.2.}  In this step we prove that for all $f\in b\mathcal B(\mathscr R^d)$, and all  sequence $(x_n)_n\subset \mathscr R^d$ such that $\lim_nx_n=x$, it holds:
 \begin{equation}\label{eq.pSs}
Y_n:=f(X_t(x_n))\to Y:=f(X_t(x)) \text{ in $\mathbb P$-probability.}
  \end{equation}
 By the Girsanov formula \eqref{eq.Girsanov}, for every $z\in \mathscr R^d$, $X_t(z)$ admits for every $t>0$ a density $p_t(z,y)$ w.r.t. the Lebesgue measure $dy$ over $\mathscr R^d$. Let us now consider an arbitrary  fixed smooth function $\psi:\mathscr R^d\to \mathscr R_+^*$ such that $\int_{\mathscr R^d} \psi(y)dy=1$. Define the probability measure  $\rho (dy):= \psi(y)dy$ and set $q_t(z,y):= p_t(z,y)\psi^{-1}(y)$ the density of  $X_t(z)$ w.r.t. the  probability measure $\rho (dy)$.    We have thanks to \eqref{eq.C1}, for all $\phi\in L^\infty(\rho)$, 
 $$\lim_{n}\int_{\mathscr R^d}\phi(y)q_t(x_n,y)\rho (dy)=\int_{\mathscr R^d}\phi(y)q_t(x,y)\rho (dy).$$
Hence, $\lim_n q_t(x_n,\cdot )= q_t(x,\cdot )$ in $\sigma(L^1(\rho ),L^\infty(\rho ))$. This implies that for all $\mathcal A\in \mathcal B(\mathscr R^d)$, $\int_{\mathcal A}q_t(x_n,y)\rho (dy)$ has a finite limit. Consequently  the sequence  $( q_t(x_n,y))_n$ is uniformly integrable in $L^1(\rho )$ (see \cite[Theorem 4.5.6]{bogachev2007measure}). Therefore, \eqref{eq.pSs} follows from \cite[Lemma 3.2]{wu1999} and the fact that $X_t(x_n) \to X_t(x)$ in $\mathbb P$-probability (see Proposition \ref{pr.c2}).
   \medskip

 \noindent
 \textbf{Step A.3.} We now conclude the proof of \eqref{eq.C0}.  To this end, consider a sequence $(x_n)_n\subset \mathscr R^d_0$ such that $\lim_nx_n=x\in \mathscr R^d_0$, where we recall that $\mathscr R^d_0=\mathscr R^d\setminus \{0\}$. 
  Introduce the following function
 $$F: \gamma\in \mathcal C([0,t],\mathscr R^d)\mapsto \int_0^t  \mathbf V_{\mathbf S}(\gamma_s)ds,$$
 where $\mathcal C([0,t],\mathscr R^d)$ is endowed with the topology of the uniform convergence. For this topology,  the function $F$ is continuous at any $\gamma \in \mathcal C([0,t],\mathscr R^d_0)$. On the other hand, recall that by Lemma~\ref{le.finiteLsur}, $\mathbb P_x[ X_{[0,t]} \in \mathcal C([0,t],\mathscr R^d_0)]=1$.  %see indeed Proposition \ref{pr.Pre} and \eqref{eq.00}, one deduces that  $\mathbb P_x[ F \text{ discontinuous at } X]=0$.  
Hence, by the continuous mapping theorem and Proposition \ref{pr.c2}, it holds  $F(X_n)\to F(X)$ in $\mathbb P$-probability (where $X_n:=X_{[0,t]}(x_n)$ and $X:=X_{[0,t]}(x)$).  Hence as $n\to +\infty$ and in $\mathbb P$-probability, 
 $$Z_n=\exp\Big[   - \int_0^t  \mathbf V_{\mathbf S}(X_s(x_n))ds\Big]\to Z=\exp\Big[   - \int_0^t  \mathbf V_{\mathbf S}(X_s(x))ds\Big].$$
 In conclusion the bounded sequence of random variables $(Z_nY_n)_n$ converges towards $ZY$ as $n\to +\infty$ in $\mathbb P$-probability, and hence, also in $L^1$. Note here that  $b\mathcal B(\mathscr R^d_0)\subset b\mathcal B(\mathscr R^d)$ if we extend any element of $b\mathcal B(\mathscr R^d_0)$ by $0$ at the point $0$. 
 This shows \eqref{eq.C0} and concludes the proof of \eqref{eq.C0}.
 \medskip

 \noindent
 \textbf{Step B}.  We now conclude the proof of \eqref{eq.C0-killed}. Let $t>0$   and $f\in b\mathcal B(\mathscr O)$. 
 By the Markov property we have for $0<s<t$ and $z\in \mathscr R^d_0$,  
$$ Q^{oL}_tf(z)=   \mathbb E_z\Big[ \mathbf 1_{s<\sigma^{oL}_{\mathscr O}} e^{  - \int_0^{s}  \mathbf V_{\mathbf S}(X_u)du}  \mathbb E_{X_s}\big[f(X_{t-s})   \, e^{  - \int_0^{t-s}  \mathbf V_{\mathbf S}(X_u)du} \mathbf 1_{t-s<\sigma^{oL}_{\mathscr O}}\big]  \Big].$$
Set  for $0<s<t$ and $x\in \mathscr R^d_0$, 
$$\varphi_s(x)=\mathbb E_{x}\big[f(X_{t-s})   \, e^{  - \int_0^{t-s}  \mathbf V_{\mathbf S}(X_u)du} \mathbf 1_{t-s<\sigma^{oL}_{\mathscr O}}\big].$$
 The function $ \varphi_s$ is bounded over $\mathscr R^d_0$  (by $\Vert f\Vert_{b\mathcal B(\mathscr R^d_0)} e^{k_{\mathbf S}t}$) and thus, by \eqref{eq.C0},  $T^{oL}_s \varphi_s$ is continuous over $\mathscr R^d_0$.  We then have for $z\in \mathscr O$:
\begin{align*}
|T^{oL}_s\varphi_s(z)-Q^{oL}_tf(z)|&\le \Vert f\Vert_{b\mathcal B(\mathscr R^d_0)} e^{k_{\mathbf S}t}  \mathbb E_{z}\big[ \mathbf 1_{\sigma^{oL}_{\mathscr O}\le s} e^{  - \int_0^{s}  \mathbf V_{\mathbf S}(X_s)ds} \big]\\
&\le\Vert f\Vert_{b\mathcal B(\mathscr R^d_0)} e^{k_{\mathbf S}2t}\mathbb P_z[\sigma^{oL}_{\mathscr O}\le s].
\end{align*}
We now claim that for all compact subset $K$ of $\mathscr O$, setting $\delta:= {\rm dist}(K, \partial \mathscr  O)/2$, 
\begin{equation}\label{eq.contu}
\sup_{z\in K} \mathbb P_z[\sigma^{oL}_{\mathscr O}\le s]\le \sup_{z\in K} \mathbb P_z[\sigma^{oL}_{B(z,\delta)}\le s] \to 0,
\end{equation}
 as $s\to 0^+$. Note that together with \eqref{eq.contu},  the proof of \eqref{eq.C0-killed}, and thus also the proof of the theorem, are complete.  
Equation \eqref{eq.contu} can be  proved with the same arguments as those used   in the proof of \cite[Lemma 2.5]{guillinqsd3} which is based on  the strong Markov property of the process and  fact that the coefficients in \eqref{eq.Lsur} are locally Lipschitz.   We give an alternative   proof of \eqref{eq.contu}  which is  based on Itô calculus.  
 Consider  a compact subset   $K$ of $\mathscr R^d$ and   $\delta>0$. Consider also 
a smooth   function  $\theta:\mathscr R^d\to [0,1]$  such that  $\theta=0$ on $B(0,\delta/2)$ and $\theta=1$ on $\mathscr R^d\setminus B(0,\delta)$. Set  $\theta_z(y)= \theta(z-y)$.  Let $K_\delta$ be the (closed)  $\delta$-neighborhood of $K$.  We then have since $\mathbf b_c$ is locally bounded:
$$\sup_{z\in K, y\in K_\delta}|\mathscr L^{oL}\theta_z(y)|<+\infty.$$
Moreover, by Itô formula and the optional stopping theorem, 
% Le Gall Corollary 3.23 appliqué avec t\wedge  \sigma^{oL}_{B(z,\delta)} qui est un tps d'arret ps borné
% 
one gets  because $\theta_z(z)=0$, 
\begin{align*}
\mathbb E_{z}[\theta_z(X_{s\wedge\sigma^{oL}_{B(z,\delta)}})]&\le   \mathbb E_z\Big[\int_0^{s\wedge \sigma^{oL}_{B(z,\delta)}}| \mathscr L^{oL}\theta_z(X_{u} ) |du\Big]  \le s \sup_{z\in K, y\in K_\delta}|\mathscr L^{oL}\theta_z(y)|,
\end{align*}
as $X_u (z)\in K_\delta$ when $u\le   \sigma^{oL}_{B(z,\delta)}$ and $z\in K$. 
Since $\theta_z(X_{ \sigma^{oL}_{B(z,\delta)}}(z))=1$, we get that:
$$ \mathbb P_z[\sigma^{oL}_{B(z,\delta)}\le s]= \mathbb E_{z}[\mathbf 1_{\sigma^{oL}_{B(z,\delta)}\le s} \theta_z(X_{ \sigma^{oL}_{B(z,\delta)}})]\le s \sup_{z\in K, y\in K_\delta}|\mathscr L^{oL}\theta_z(y)|,$$
when $z\in K$. 
This proves \eqref{eq.contu}. 
 \end{proof}
 
 \noindent
 \textbf{Note.} 
Let us mention that there is an alternative proof of \eqref{eq.C1} based on the Girsanov formula~\eqref{eq.Girsanov}. Indeed, using e.g.~\cite[Proposition 5.8]{le2016brownian}, one can show that as $x_n\to x\in \mathscr R^d$,
\begin{equation}\label{eq.mm}
m_t^0(x_n)\to m_t^0(x) \text{ in } \mathbb P\text{-probability}.
\end{equation}
Since $\mathbb E[m_t^0(x_n)]=\mathbb E[m_t^0(x)]=1$, one then deduces (by 
Vitali convergence theorem~\cite[Theorem 6.16]{schillingBook})
 that $m_t^0(x_n)\to m_t^0(x)$ in $L^1(\mathbb P)$. 
 On the other hand, for any $f\in b\mathcal B(\mathscr R^d)$, $f(x_n+B_t)\to f(x+B_t)$ in $\mathbb P$-probability (one can use for instance \cite[Lemma 3.2]{wu1999} to prove it). Using~\eqref{eq.Girsanov} then proves \eqref{eq.C1}.

   \begin{prop}\label{pr.TI}
Assume  {\rm \textbf{[c1]}} or {\rm \textbf{[c2]}} and assume also    {\rm \textbf{[S1]}} or {\rm \textbf{[S2]}}.    Then, for all $t>0$, $Q^{oL}_t$ is topologically irreducible on $\mathscr O$, i.e. for all $x\in \mathscr O$ and all non-empty open subset $O$ of $\mathscr O$,  $Q^{oL}_t(x,O)>0$.
 \end{prop}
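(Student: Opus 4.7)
The plan is to reduce topological irreducibility of $Q^{oL}_t$ to that of Brownian motion killed on exit from $\mathscr O$, using Girsanov's formula to strip out the drift and the a.s. finiteness of $\int_0^t \mathbf V_{\mathbf S}(X_s)\,ds$ from Lemma~\ref{le.finiteLsur} to strip out the Feynman--Kac weight. Concretely, I would first observe that, by Lemma~\ref{le.finiteLsur}, on the event $\{t<\sigma^{oL}_{\mathscr O}\}$ the integral $\int_0^t \mathbf V_{\mathbf S}(X_s)\,ds$ is finite $\mathbf P_x$-a.s., so the exponential weight is strictly positive almost surely. Hence
\[
Q^{oL}_t(x,O)>0 \iff \mathbf P_x\bigl[X_t\in O,\ t<\sigma^{oL}_{\mathscr O}\bigr]>0.
\]

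Next I would invoke the Girsanov formula \eqref{eq.Girsanov}: since $m_t^0(x)$ is a true martingale, $\mathbf P_x$ and $\mathbf P_x^0$ are mutually absolutely continuous on $\mathcal F_t$, so the positivity above is equivalent to
\[
\mathbf P_x^0\bigl[X_s \in \mathscr O \text{ for all } s \in [0,t],\ X_t \in O\bigr] > 0,
\]
which under $\mathbf P_x^0$ reads $\mathbb P[x+B_s\in \mathscr O\ \forall s\in [0,t],\ x+B_t\in O]>0$. Thus it suffices to prove topological irreducibility of the Brownian motion killed at the first exit from $\mathscr O$.

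The last step is a standard tube argument. Since $\mathscr O$ is open and connected in $\mathscr R^d_0$, it is also open and connected (and therefore path-connected) in $\mathscr R^d$. Pick any $y\in O$ and a continuous curve $\gamma:[0,t]\to \mathscr O$ with $\gamma(0)=x$ and $\gamma(t)=y$. By compactness of $\gamma([0,t])$ and openness of $\mathscr O$ and $O$, there exists $\delta>0$ such that the $\delta$-tube $\{z\in\mathscr R^d:\mathrm{dist}(z,\gamma([0,t]))<\delta\}$ is contained in $\mathscr O$ and $B(y,\delta)\subset O$. By the classical support theorem for Brownian motion (the law of $x+B_\cdot$ charges every open subset of $C_x([0,t],\mathscr R^d)$),
\[
\mathbb P\Bigl[\sup_{s\in[0,t]}|x+B_s-\gamma(s)|<\delta\Bigr]>0,
\]
and on this event $x+B_s\in \mathscr O$ for every $s\in[0,t]$ while $x+B_t\in B(y,\delta)\subset O$.

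There is no serious obstacle here: the only non-cosmetic ingredient is the Girsanov equivalence, which is already established in Proposition~\ref{pr.Pre}, and the fact that $\{t<\sigma^{oL}_{\mathscr O}\}$ implies the path stays away from $0$ up to time $t$ (so the Feynman--Kac weight does not degenerate), which is exactly Lemma~\ref{le.finiteLsur}. The tube/support argument is purely topological and needs no regularity on $\partial\mathscr O$.
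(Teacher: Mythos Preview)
Your proposal is correct and follows essentially the same approach as the paper: the paper also reduces to showing $\mathbb E_x[\mathbf 1_{O}(X_t)\mathbf 1_{t<\sigma^{oL}_{\mathscr O}}]>0$ (invoking either a support-of-trajectories argument or, as it explicitly notes, the Girsanov formula \eqref{eq.Girsanov}), and then concludes via Lemma~\ref{le.finiteLsur} that the Feynman--Kac weight is a.s.\ strictly positive. Your Girsanov-plus-tube argument is exactly the second alternative the paper points to.
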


 %%%

  \begin{proof}
Since $\mathbf b_{\mathbf c}$ is locally Lipschitz and $\mathscr O$ is a subdomain of $\mathscr R^d$, with the same arguments as those used to prove~\cite[Proposition 2.10]{guillinqsd3}, which are based on the knowledge of the support of the law of the trajectories of  the Brownian motion, one shows that  for all $t>0$, all $x\in \mathscr O$ and all non-empty open subset $  O$ of $\mathscr O$,  $\mathbb E_x[\mathbf 1_{  O}(X_t) \mathbf 1_{t<\sigma^{oL}_{\mathscr O}}]>0$. Note that this property can also be obtained with the Girsanov formula \eqref{eq.Girsanov}. The desired result follows from the fact that for all   $x\in \mathscr O$ and $t>0$, $ \exp [   - \int_0^t  \mathbf V_{\mathbf S}(X_s(x))ds]>0$ a.s. by Lemma~\ref{le.finiteLsur}. 
 \end{proof}

 %%%

% \begin{proof}
% Conditions \eqref{eq.St1} and \eqref{eq.St2} implies (see indeed the proof of the second step in the proof of~\cite[Theorem 3]{ferre2}) that there exist two positive sequences $(\gamma_n)$ and $(b_n)$ such that  $\gamma_n\to 0^+$ and an increasing sequence of compact sets $(K_n)$ of $\mathscr R^d_0$ such that on $\mathscr R^d_0$:
% $$Q^{oL}_t\mathbf W\le \gamma_n \mathbf W+ b_n \mathbf 1_{K_n}.$$
%This second statement is a consequence of the fact that $Q^{oL}_t$ is strongly Feller for $t>0$ together with the analysis led in the first  step in the proof of~\cite[Lemma 2]{ferre2}. 
% \end{proof}
 %%%%%

 \begin{prop}\label{th.infty}
Assume    {\rm \textbf{[c1]}}   and assume also    {\rm \textbf{[S1]}} or {\rm \textbf{[S2]}}.   Let $\mathbf W$ be defined by \eqref{eq.W}.  Then, when $|x|\to  +\infty$,  $ \mathscr L^{oL}   \mathbf W(x) /{\mathbf W(x) } \to -\infty$. 
 In particular, for any $c>0$, when 
  $x\to \partial \mathscr R^d_0\cup \{\infty\}$ and $x\in \mathscr R^d_0$  (where $\partial \mathscr R^d_0=\{0\}$), 
\begin{equation}\label{eq.Lyy}
\frac{\mathscr L^{oL}   \mathbf W(x) }{\mathbf W(x) } - c \mathbf V_{\mathbf S}(x)\to -\infty.
\end{equation}
Assume that  {\rm \textbf{[S2]}} holds and that either   {\rm \textbf{[c1]}} or {\rm \textbf{[c2]}} holds. Then, \eqref{eq.Lyy} is satisfied  with the Lyapunov function $\mathbf 1$. 
 \end{prop}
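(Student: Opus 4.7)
The plan is to leverage the explicit computation of $\mathscr L^{oL}\mathbf W$ already performed in the proof of Proposition \ref{pr.Pre}, namely
\[
\frac{\mathscr L^{oL}\mathbf W(x)}{\mathbf W(x)} = \epsilon\, \mathbf b_{\mathbf c}(x)\cdot \nabla \mathbf L(x) + \tfrac12\epsilon^2|\nabla \mathbf L(x)|^2 + \tfrac12\epsilon\, \Delta \mathbf L(x),
\]
and then read off the asymptotics separately near $\infty$ and near $\{0\}$.

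First I would handle the behaviour at infinity. For $|x|\geq 1$ the cutoff $\chi$ vanishes, so $\mathbf L(x)=|x|$, giving $\nabla \mathbf L(x)=x/|x|$ and $\Delta \mathbf L(x)=(d-1)/|x|$. Substituting yields, for $|x|\ge 1$,
\[
\frac{\mathscr L^{oL}\mathbf W(x)}{\mathbf W(x)} = \epsilon\, \mathbf b_{\mathbf c}(x)\cdot \frac{x}{|x|} + \frac{\epsilon^2}{2} + \frac{\epsilon(d-1)}{2|x|}.
\]
Since the last two terms are bounded, assumption \textbf{[c1]} forces this expression to $-\infty$ as $|x|\to +\infty$, which proves the first claim.

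Next I would deduce \eqref{eq.Lyy} by splitting the approach $x\to \partial\mathscr R^d_0\cup\{\infty\}$ into the two regimes. For $|x|\to+\infty$, the previous paragraph gives $\mathscr L^{oL}\mathbf W/\mathbf W\to -\infty$, and since $-c\mathbf V_{\mathbf S}\le c\,k_{\mathbf S}$ (with $\mathbf V_{\mathbf S}\ge -k_{\mathbf S}$), or even tends to $-\infty$ under \textbf{[S2]}, the sum diverges to $-\infty$. For $x\to 0$, one exploits the definition of $\mathbf L$: on $B(0,1/2)$ we have $\chi\equiv 1$, so $\mathbf L\equiv 0$, hence $\mathbf W\equiv 1$ and $\mathscr L^{oL}\mathbf W\equiv 0$ on this neighbourhood. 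Consequently, for $x$ close to $0$,
\[
\frac{\mathscr L^{oL}\mathbf W(x)}{\mathbf W(x)} - c\mathbf V_{\mathbf S}(x) = -c\mathbf V_{\mathbf S}(x) \longrightarrow -\infty,
\]
by either \textbf{[S1]} or \textbf{[S2]}. This establishes \eqref{eq.Lyy}.

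Finally, for the last statement assume \textbf{[S2]} together with \textbf{[c1]} or \textbf{[c2]}, and take the Lyapunov function $\mathbf W\equiv\mathbf 1$. Then $\mathscr L^{oL}\mathbf 1 \equiv 0$, so the left-hand side of \eqref{eq.Lyy} reduces to $-c\mathbf V_{\mathbf S}(x)$, which tends to $-\infty$ both as $|x|\to 0^+$ and as $|x|\to+\infty$ by \textbf{[S2]}. The argument is essentially a direct calculation; the only mild subtlety is to make sure the transition zone where $\chi$ is neither $0$ nor $1$ does not cause trouble, but this region is a compact subset of $\mathscr R^d_0$ on which $\mathscr L^{oL}\mathbf W/\mathbf W$ is continuous and bounded, hence it plays no role in the limit.
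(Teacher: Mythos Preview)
Your proof is correct and follows exactly the approach the paper intends: the paper's own proof simply reads ``See the proof of Proposition~\ref{pr.Pre},'' and you have unpacked precisely that reference, using the formula $\mathscr L^{oL}\mathbf W/\mathbf W = \epsilon\,\mathbf b_{\mathbf c}\cdot\nabla\mathbf L + \tfrac{\epsilon^2}{2}|\nabla\mathbf L|^2 + \tfrac{\epsilon}{2}\Delta\mathbf L$ together with the boundedness of $\nabla\mathbf L$, $\Delta\mathbf L$ and assumption \textbf{[c1]}. Your additional care in treating the regimes $|x|\ge 1$, $|x|<1/2$, and the compact transition zone separately, as well as the trivial computation for $\mathbf W\equiv\mathbf 1$, makes explicit what the paper leaves implicit.
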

%The previous  proposition will imply  that, when {\rm \textbf{[c1]}} (resp. {\rm \textbf{[c2]}}) holds,  the operator $Q^{oL}_t|_{b\mathcal B_{\mathbf W}(\mathscr R^d_0)}$  (resp. $Q_t|_{b\mathcal B(\mathscr R^d_0)}$) has a spectral gap.
 \begin{proof}
 See the proof of Proposition \ref{pr.Pre}.  
  \end{proof}
  Note that when there exists  $k>2$ and $c_1,c_2>0$ 
 such that $ - \mathbf b_{\mathbf c}(x) \cdot x \ge c_2 |x|^{k}-c_1$ for all $x\in \mathscr R^d$, one can choose the smaller Lyapunov function than \eqref{eq.W} defined by: 
 \begin{equation}\label{eq.alternativeW}
 \mathbf W= \boldsymbol \ell+1, \ \boldsymbol \ell = |\cdot |^k \times (1-\chi).
  \end{equation}
   We then indeed have  for $|x|$ sufficiently large  and some $c>0$ that $\mathscr L^{oL} \mathbf W(x)/\mathbf W(x)\le - c|x|^{2(k-1)-k}\to -\infty$ as $|x|\to \infty$.

  \subsubsection{A Perron-Frobenius type theorem from \cite{guillinqsd}}
 \label{sec.thm41}
 In this section, we recall~\cite[Theorem~4.1]{guillinqsd}. To this end, let us consider    a positive bounded kernel $Q=Q(x,dy)$ on the Polish space $\mathscr S$. The typical   state spaces $\mathscr S$ we consider in this work are open   subsets $\mathscr M$  of $\mathscr R^k$.  Let us also consider a continuous   function  $\mathbf W_\star: \mathscr S\to [1,+\infty)$.
We assume that $Q$ satisfies the following three regularity assumptions: 
 \begin{enumerate}
 \item[$\mathfrak 1$.] There exists $N_1\ge 1$ such that $Q^k$ is Feller for all $k\ge N_1$, i.e. $Q^kf\in \mathcal C_b(\mathscr S)$ for all $f\in \mathcal C_b(\mathscr  S)$. 
 \item[$\mathfrak 2$.] There exists $N_2\ge 1$, $Q^{N_2}$ is topologically irreducible over $\mathscr S$, i.e.  for any $x\in \mathscr S$ and  for every nonempty open subset $O$ of $\mathscr S$, $
 Q^{N_2} (x,O)>0$. 
 \item[$\mathfrak 3$.] For some $p>1$ and some constant $C>0$,  
 $Q\mathbf W_\star^p \le C \mathbf W_\star^p$.  
 \end{enumerate}
  \begin{thm}\label{thm41}(\cite[Theorem 4.1]{guillinqsd}) Assume, in addition to the three previous assumptions $\mathfrak 1$, $\mathfrak 2$, and $\mathfrak 3$ above, that $Q$ has a spectral gap in $b\BB_{\mathbf W_\star} (\mathscr S)$, i.e. 
\begin{equation}\label{eq.SGap}
 \mathsf r_{ess}(Q|_{b\BB_{\mathbf W_\star} (\mathscr S)})<\mathsf r_{sp}(Q|_{b\BB_{\mathbf W_\star} (\mathscr S)}).
\end{equation}
Then, there exist  a unique couple $(\mu, \varphi) \in \mathcal P_{\mathbf W_\star}(\mathscr S)\times \mathcal C_{b\mathbf W_\star}(\mathscr S)$, where  $\varphi>0$ over $\mathscr S$ and  $\mu(\varphi)=1$, and constants $r\in ]0,1[, C\ge 1$, such that
$$
 \mu Q=\mathsf r_{sp}(Q|_{b\BB_{\mathbf W_\star}}(\mathscr S))\mu, \ Q\varphi= \mathsf r_{sp}(Q|_{b\BB_{\mathbf W_\star} (\mathscr S)}) \varphi
$$ 
 and
\bbeq\label{thm52c}
\Big \|\frac {1}{\mathsf r_{sp}(Q|_{b\BB_{\mathbf W_\star}(\mathscr S)})^n}Q^n f - \varphi \mu(f)\Big \|_{b\BB_{\mathbf W_\star}(\mathscr S)} \le C r^n \|f \|_{b\BB_{\mathbf W_\star} (\mathscr S)}, \ \forall f\in b\BB_{\mathbf W_\star}(\mathscr S).
\nneq
Moreover, $\mu(O)>0$ for all nonempty open subset $O$ of $\mathscr S$. 
In particular, we have: 
 \benu[(a)]
\item[$\mathfrak a$.] If  $\nu\in \MM_{b\mathbf W_\star}(\mathscr S) $  satisfies for some $\lambda \in \mathscr R$, $\nu Q= \lambda\nu$ and $\nu(\varphi)\ne0$, then $\lambda=\mathsf r_{sp}(Q|_{b\BB_{\mathbf W_\star} (\mathscr S)})$ and $\nu=c\mu$ for some constant $c$.
 \item[$\mathfrak b$.] If $f\in b\BB_{\mathbf W_\star}(\mathscr S)$ satisfies for some  $\lambda \in \mathscr R$,  $Qf=\lambda f$  and  $\mu(f)\ne 0$, then $\lambda=\mathsf r_{sp}(Q|_{b\BB_{\mathbf W_\star}(\mathscr S)})$ and $f=c\varphi$ for some constant $c$.
 \nenu 
   \end{thm}

  \subsubsection{Measures of non  compactness}
 \label{sec.bw}
 For a bounded non negative transition kernel $Q$ on a Polish space $\mathscr S$, set
\begin{equation}\label{eq.betaw}
\beta_w(Q  ):=\inf_{K\subset \mathscr S}\, \sup_{ x\in \mathscr S}\,  Q( x, \mathscr S\setminus K)\, \text{ and } \, \beta_\tau(Q ):=\sup_{(A_n)_n}\, \lim_{n\to +\infty} \,  \sup_{ x\in \mathscr S}\, Q( x, A_n) ,
\end{equation}
where the infimum is taken over all compact subset  $K$ of $\mathscr S$ and supremum is taken over all the sequences  $(A_n)_n$ of elements of  $  \mathcal B(\mathscr S)$ decreasing towards $\emptyset$. The kernel $Q$ satisfies  \cite[\textbf{(A1)}]{Wu2004} if by definition, for any compact subset $K$ of $\mathscr S$, 
$$
\text{\textbf{(A1)} }\ \ \beta_w(\mathbf 1_K Q)=0 \text{ and  } \beta_\tau(\mathbf 1_KQ^N)=0,
$$
for some  $N\ge 1$  independent of $K$. 
Under this previous assumption, the last author proves in \cite[Theorem 3.5]{Wu2004} the following Gelfand-Nussbaum type formula  \eqref{eq.rr} 
 \begin{equation}\label{eq.Gelfand}
 \mathsf r_{ess}(Q|_{b\mathcal B(\mathscr S)})=\lim_{n\to +\infty}[ \beta_w((Q|_{b\mathcal B(\mathscr S)})^n)]^{1/n}.
 \end{equation}
 for the essential spectral radius of the transition kernel $Q$.

%%%

\subsubsection{On the essential spectral radius of the non-killed Feynman-Kac semigroup}
 \label{sec.L3}

Recall \eqref{eq.C0} for the definition of the  non-killed Feynman-Kac semigroup  $(T^{oL}_t,t\ge 0)$. 
 
  \begin{thm}\label{th.Ress}
 Assume  {\rm \textbf{[c1]}} or {\rm \textbf{[c2]}} and assume also    {\rm \textbf{[S1]}} or {\rm \textbf{[S2]}}.   
Moreover, assume  that there exist   constants ${\mathfrak m_0},{\mathfrak r_0},{\mathfrak b_0}>0$,  a compact subset $\mathscr K_0$ of $\mathscr R^d_0=\mathscr R^d\setminus \{0\}$, and   a $\mathcal C^2$  function $\mathbf W_\star:\mathscr R^d  \to [1,+\infty)$  such  that 
%$x\to \partial \mathscr R^d_0\cup \{+\infty\}$ ($x\in \mathscr R^d_0$), 
\begin{equation}\label{eq.cc1}
\frac{\mathscr L^{oL}  \mathbf W_\star }{\mathbf W_\star }-   \mathbf V_{\mathbf S}\le -{\mathfrak r_0} \mathbf 1_{\mathscr R^d_0\setminus \mathscr K_0} + {\mathfrak b_0}\mathbf 1_{\mathscr K_0} \text{ on } \mathscr R^d_0, 
\end{equation}
and for some $p>1$, 
\begin{equation}\label{eq.cc2}
( \mathscr L^{oL} - \mathbf V_{\mathbf S})\mathbf W_\star^p\le {\mathfrak m_0} \mathbf W_\star^p \text{ on } \mathscr R^d_0.
\end{equation}
Then, for all $t\ge 0$, $T^{oL}_t$ is a bounded operator on $b\mathcal B_{\mathbf W_\star}(\mathscr R^d_0)$. In addition (see \eqref{eq.betaw} and \eqref{eq.Ttt} for the definitions of $\beta_w$ and $T^{oL}_{t,\mathbf W_\star}$), for $t>0$, 
$$\beta_w(T^{oL}_{t,\mathbf W_\star}|_{b\mathcal B (\mathscr R^d_0)})\le e^{-{\mathfrak r_0}t}\,  \text{ and  } \, \mathsf r_{ess}(T^{oL}_t|_{b\mathcal B_{\mathbf W_\star}(\mathscr R^d_0)})\le e^{-{\mathfrak r_0}t}.$$ 
  \end{thm}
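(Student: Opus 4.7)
The plan is to apply Dynkin's formula separately to each of the Lyapunov inequalities \eqref{eq.cc1} and \eqref{eq.cc2}, to combine them via Hölder's inequality, and then to invoke the measure-of-non-compactness machinery of \cite{Wu2004}. Concretely I will (a) deduce from \eqref{eq.cc2} the pointwise bound $T_t^{oL}\mathbf W_\star^p\le e^{\mathfrak m_0 t}\mathbf W_\star^p$ and from it the boundedness of $T_t^{oL}$ on $b\mathcal B_{\mathbf W_\star}(\mathscr R^d_0)$ by Hölder; (b) extract from \eqref{eq.cc1} the sharper Lyapunov estimate $T_t^{oL}\mathbf W_\star(x)\le e^{-\mathfrak r_0 t}\mathbf W_\star(x)+\kappa_t$ with $\kappa_t$ independent of $x$; (c) combine these via a splitting of $\{\mathbf W_\star\ge M\}$ versus $\{\mathbf W_\star<M\}$ to bound $\beta_w$ of the weighted kernel $T_{t,\mathbf W_\star}^{oL}(x,dy):=\mathbf W_\star(y)\mathbf W_\star(x)^{-1}T_t^{oL}(x,dy)$ by $e^{-\mathfrak r_0 t}$; and (d) iterate this estimate along the semigroup and invoke the Gelfand-type formula \eqref{eq.Gelfand} of \cite[Theorem~3.5]{Wu2004} to pass to $\mathsf r_{ess}$.

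For (a), I would apply Itô to $\mathbf W_\star^p(X_t)e^{-\int_0^t\mathbf V_{\mathbf S}(X_s)\,ds}$, localised at the exit times $\sigma^{oL}_{\mathscr W_R}$ from the sublevel sets of \eqref{eq.W} so that the stochastic integral is a true martingale. Condition \eqref{eq.cc2} bounds the drift by $\mathfrak m_0\mathbf W_\star^p(X_t)e^{-\int\mathbf V_{\mathbf S}}$, and letting $R\to\infty$ by Fatou (using Lemma~\ref{le.finiteLsur}) yields the bound on $T_t^{oL}\mathbf W_\star^p$. Splitting $e^{-\int\mathbf V_{\mathbf S}}=e^{-\int\mathbf V_{\mathbf S}/p}\cdot e^{-\int\mathbf V_{\mathbf S}/q}$ with $q$ the conjugate exponent to $p$ and applying Hölder in the Feynman--Kac expectation gives
\[
T_t^{oL}\mathbf W_\star(x)\le\bigl(T_t^{oL}\mathbf W_\star^p(x)\bigr)^{1/p}\bigl(T_t^{oL}\mathbf 1(x)\bigr)^{1/q}\le e^{(\mathfrak m_0/p+k_{\mathbf S}/q)t}\,\mathbf W_\star(x),
\]
where I have used $T_t^{oL}\mathbf 1\le e^{k_{\mathbf S}t}$. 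For (b), the same Dynkin scheme applied to $e^{\mathfrak r_0 t}\mathbf W_\star(X_t)e^{-\int_0^t\mathbf V_{\mathbf S}}$ shows, thanks to \eqref{eq.cc1}, that the drift is nonpositive on $\mathscr K_0^c$ and bounded on $\mathscr K_0$ by $(\mathfrak r_0+\mathfrak b_0)\sup_{\mathscr K_0}\mathbf W_\star<\infty$; a direct computation then produces the announced Lyapunov decay with $\kappa_t$ depending only on $t,\mathfrak r_0,\mathfrak b_0,k_{\mathbf S}$ and $\sup_{\mathscr K_0}\mathbf W_\star$.

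For step (c), fix a compact $\mathscr K\subset\mathscr R^d_0$ and $M>0$ large. On $\{\mathbf W_\star(x)\ge M\}$ the trivial bound from (b) yields $T_{t,\mathbf W_\star}^{oL}(x,\mathscr K^c)\le e^{-\mathfrak r_0 t}+\kappa_t/M$, which lies within $\epsilon$ of $e^{-\mathfrak r_0 t}$ for $M$ large. On $\{\mathbf W_\star(x)<M\}$ the Hölder bound of (a) applied with $\mathbf 1_{\mathscr K^c}$ inside reduces matters to uniform smallness of $T_t^{oL}\mathbf 1_{\mathscr K^c}(x)$ over $\{\mathbf W_\star<M\}$ as $\mathscr K$ exhausts $\mathscr R^d_0$. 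This is the main obstacle: the set $\{\mathbf W_\star<M\}$ is \emph{not} relatively compact in $\mathscr R^d_0$, because the Lyapunov function \eqref{eq.W} is bounded near the singular boundary $\{0\}$. Escape to $\infty$ is handled by tightness coming from \eqref{eq.cc1}, but escape to $\{0\}$ must be absorbed by the exponential killing $e^{-\int\mathbf V_{\mathbf S}}$: continuity of paths (Proposition~\ref{pr.c2}) shows that for $x$ near $0$ the trajectory remains, with high probability, in a small neighbourhood of $x$ over a short deterministic time interval on which $\mathbf V_{\mathbf S}(X_s)$ is arbitrarily large, forcing $T_t^{oL}\mathbf 1(x)\to 0$ and hence the required uniformity. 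This yields $\beta_w(T_{t,\mathbf W_\star}^{oL})\le e^{-\mathfrak r_0 t}$; applying the whole scheme at the times $nt$ gives $\beta_w(T_{nt,\mathbf W_\star}^{oL})\le e^{-n\mathfrak r_0 t}$, and since $(T_{t,\mathbf W_\star}^{oL})^n=T_{nt,\mathbf W_\star}^{oL}$, formula \eqref{eq.Gelfand} together with the verification of \textbf{(A1)} (from the strong Feller property of Theorem~\ref{th.SF1} and the vanishing of $\beta_\tau$ on compacts for strong Feller kernels) yields the announced bound on $\mathsf r_{ess}$.
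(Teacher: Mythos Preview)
Your steps (a), (b), and (d) are sound and coincide with the paper's arguments. The genuine gap is in step~(c), and it is precisely the point you yourself flag as ``the main obstacle''.

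The split $\{\mathbf W_\star(x)\ge M\}$ versus $\{\mathbf W_\star(x)<M\}$ is an \emph{initial-condition} split, and on the second set you must control $T^{oL}_{t,\mathbf W_\star}(x,\mathscr K^c)$ uniformly as $x\to 0$ (and possibly as $|x|\to\infty$, since the theorem does not assume $\mathbf W_\star$ is coercive; indeed $\mathbf W_\star\equiv\mathbf 1$ is the relevant choice under~\textbf{[S2]}). Your remedy is the claim that $T^{oL}_t\mathbf 1(x)\to 0$ as $x\to 0$, but this is \emph{false} in general. The heuristic ``the trajectory stays near $x$ over a short time $\delta$ on which $\mathbf V_{\mathbf S}$ is large'' only yields $\int_0^\delta\mathbf V_{\mathbf S}(X_s)\,ds\gtrsim \delta\cdot\inf_{|y|<2|x|}\mathbf V_{\mathbf S}(y)$, and the event has high probability only for $\delta$ of order $|x|^2$ (Brownian scaling). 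Hence the integral is of order $|x|^2\inf_{|y|<2|x|}\mathbf V_{\mathbf S}(y)$, which need \emph{not} diverge: for a logarithmic singularity $\mathbf V_{\mathbf S}(y)\sim\log(1/|y|)$ one gets $|x|^2\log(1/|x|)\to 0$, and a direct computation (or the Kato-class theory) shows $\liminf_{x\to 0}T^{oL}_t\mathbf 1(x)>0$. Since \textbf{[S1]}/\textbf{[S2]} impose no rate of blow-up, the argument breaks down.

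The paper circumvents this by a \emph{trajectory} split on the hitting time $\tau_{\mathscr K_0}=\inf\{s:X_s\in\mathscr K_0\}$. On $\{t<\tau_{\mathscr K_0}\}$ the indicator $\mathbf 1_{\mathscr K_0}$ in \eqref{eq.cc1} never fires, so the supermartingale $\mathsf m_t=\frac{\mathbf W_\star(X_t)}{\mathbf W_\star(X_0)}\exp\bigl(-\int_0^t\frac{\mathscr L^{oL}\mathbf W_\star}{\mathbf W_\star}(X_s)\,ds\bigr)$ together with \eqref{eq.cc1} gives the exact bound $\frac{1}{\mathbf W_\star(x)}\mathbb E_x[\mathbf W_\star(X_t)\mathbf 1_{t<\tau_{\mathscr K_0}}e^{-\int_0^t\mathbf V_{\mathbf S}}]\le e^{-\mathfrak r_0 t}$ for \emph{every} $x$, with no additive $\kappa_t$. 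On $\{\tau_{\mathscr K_0}\le t\}$ the strong Markov property restarts the process from $X_{\tau_{\mathscr K_0}}\in\mathscr K_0$, and one only needs tightness of the path laws $\{\mathbb P_z[X_{[0,t]}\in\cdot]:z\in\mathscr K_0\}$ in $\mathcal C([0,t],\mathscr R^d_0)$, which follows from the weak continuity~\textbf{[P$_{\rm traj}$]} (Proposition~\ref{pr.c2} plus Lemma~\ref{le.finiteLsur}) over the \emph{compact} set $\mathscr K_0\subset\mathscr R^d_0$. This is what lets one avoid any quantitative assumption on the blow-up of $\mathbf V_{\mathbf S}$: the only compact over which one needs uniform tightness is the fixed $\mathscr K_0$, not the possibly non-compact sublevel set $\{\mathbf W_\star<M\}$.
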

  %%%%
  
%  \begin{remarks}
% In this work, the Lyapunov  function $\mathbf W_\star$ will actually be defined over the whole space $\mathscr R^d$. 
%   \end{remarks}

 %%%

 \begin{proof}
 In all this proof,  $k_{\mathbf S}\ge 0$ is  such that $\mathbf V_{\mathbf S}\ge - k_{\mathbf S}$.  
   The proof of Theorem \ref{th.Ress} is divided into three steps. 
  We first prove that $T^{oL}_t:b\mathcal B_{\mathbf W_\star}(\mathscr R^d_0) \to b\mathcal B_{\mathbf W_\star}(\mathscr R^d_0)$ is bounded (\textbf{Step 1}). Then, in order to use the formula \eqref{eq.Gelfand} with this operator, we check that it satisfies assumption  \textbf{(A1)}, this is the purpose of the second step (see \textbf{Step 2} below).  In the last step (\textbf{Step 3}), we finally give an upper bound on $\beta_w(T^{oL}_{t,\mathbf W_\star})$ making use in particular of \eqref{eq.cc1}.    
\medskip

\noindent
\textbf{Step 1.} In this step we prove that  $T^{oL}_t$ is a bounded operator on $b\mathcal B_{\mathbf W_\star}(\mathscr R^d_0)$. The proof is  rather standard, we make it for sake of completeness.    Differentiating the function $t\in \mathscr R_+\mapsto e^{-\mathfrak m_0t}T^{oL}_t\mathbf W_\star^p(x)$ for $x\in \mathscr R^d_0$ and using \eqref{eq.cc2} yields:
 $$\frac{d}{dt}\big(e^{-\mathfrak m_0t}T^{oL}_t\mathbf W_\star^p\big )=e^{-\mathfrak m_0t} T^{oL}_t\big (-\mathfrak m_0\mathbf W_\star^p +  (\mathscr L^{oL} -   \mathbf V_{\mathbf S})\mathbf W_\star^p\big ) \le 0.$$
Therefore, for all $t\ge 0$, 
\begin{equation} \label{eq.=p-}
T^{oL}_t\mathbf W_\star^p\le e^{\mathfrak m_0t}\mathbf W_\star^p \text{ on } \mathscr R^d_0.
\end{equation} 
In addition, by Hölder’s inequality, we have on $\mathscr R^d_0$, 
$$T^{oL}_t\mathbf W_\star\le [T^{oL}_t\mathbf 1]^{1/q}[T^{oL}_t\mathbf W_\star^p]^{1/p},$$ where $q=p/(p-1)$. On the other hand,     it holds for all $x\in \mathscr R^d_0$, $T^{oL}_t\mathbf 1(x)= \mathbb E_x [  \, e^{  - \int_0^t  \mathbf V_{\mathbf S}(X_s)ds}   ]\le e^{k_{\mathbf S}t}$. Consequently, on $\mathscr R^d_0$, 
\begin{equation}\label{eq.Q1}
T^{oL}_t\mathbf W_\star\le e^{(k_{\mathbf S}/q+\mathfrak m_0/p)t}\mathbf W_\star.
\end{equation}
 Hence, $T^{oL}_t:b\mathcal B_{\mathbf W_\star}(\mathscr R^d_0)\to b\mathcal B_{\mathbf W_\star}(\mathscr R^d_0)$ is well defined and is  a bounded operator. 
We can then define the operator $T^{oL}_{t,\mathbf W_\star}$ on $b\mathcal B(\mathscr R^d_0)$ by
\begin{equation}\label{eq.Ttt}
T^{oL}_{t,\mathbf W_\star}(x,dy)= \frac{\mathbf W_\star(y)}{\mathbf W_\star(x)} T_{t}^{oL}(x,dy),
\end{equation}
 which is similar to $T^{oL}_t|_{b\mathcal B_{\mathbf W_\star}(\mathscr R^d_0)}$. In particular,  $T^{oL}_{t,\mathbf W_\star}|_{b\mathcal B(\mathscr R^d_0)}$ and $T^{oL}_t|_{b\mathcal B_{\mathbf W_\star}(\mathscr R^d_0)}$  have the same essential spectral radius. 
 \medskip

\noindent
\textbf{Step 2.}
 We now prove that $T^{oL}_{t,\mathbf W_\star}|_{b\mathcal B(\mathscr R^d_0)}$  satisfies  the assumption \textbf{(A1)}. Since $T^{oL}_t$ is strongly Feller for all $t>0$, see Theorem \ref{th.SF1},  $T^{oL}_t$ satisfies   \textbf{(A1)} with $N=1$ (see \cite[Remark 3.3]{guillinqsd}). Let $K_1$ be a compact subset of $\mathscr R^d_0$. Applying  Hölder’s inequality, we have that since $\beta_w(\mathbf 1_{K_1}T^{oL}_{t})=0$, 
\begin{align*}
\beta_w(\mathbf 1_{K_1} T^{oL}_{t,\mathbf W_\star})&= \inf_{K\subset \mathscr R^d_0}\  \sup_{x\in  K_1} \, T^{oL}_{t,\mathbf W_\star}(x,\mathscr R^d_0\setminus K)\\
&= \inf_{K\subset \mathscr R^d_0}\  \sup_{x\in  K_1} \, \frac{1}{\mathbf W_\star(x)} \int_{\mathscr R^d_0\setminus K} \mathbf W_\star(y) T^{oL}_t(x,dy) \le e^{{\mathfrak m_0}t/p}[\beta_w(\mathbf 1_{K_1}T^{oL}_{t})]^{1/q}=0,
\end{align*} 
 by the Feller property of $T^{oL}_{t}$. 
With the same arguments, for every $t>0$,
$$\beta_\tau (\mathbf 1_{K_1} T^{oL}_{t,\mathbf W_\star})= \sup_{(A_n)}\lim_{n\to +\infty} \sup_{x\in  K_1} T^{oL}_{t,\mathbf W_\star}(x,A_n) \le e^{{\mathfrak m_0}t/p}[\beta_\tau(\mathbf 1_{K_1} T^{oL}_{t})]^{1/q}=0,$$
 by the strong Feller property of $T^{oL}_{t}$ (see Theorem \ref{th.SF1}). 
 Hence, for every $t>0$,  $T^{oL}_{t,\mathbf W_\star}|_{b\mathcal B(\mathscr R^d_0)}$  satisfies  \textbf{(A1)}\footnote{This can also be obtained noticing that $T^{oL}_{t,\mathbf W_\star}$ is   strongly Feller for $t>0$.}.   This allows us to use \eqref{eq.Gelfand} to get that:
\begin{equation}\label{eq.rr}
\mathsf r_{ess}(T^{oL}_{t,\mathbf W_\star}|_{b\mathcal B(\mathscr R^d_0)})=\lim_{n\to +\infty}[ \beta_w\big ([T^{oL}_{t,\mathbf W_\star}]^n\big )]^{1/n}.
\end{equation}
 Let us now prove that 
\begin{equation}\label{eq.Kw}
\beta_w(T^{oL}_{t,\mathbf W_\star})\le e^{-{\mathfrak r_0}t} .
\end{equation}
Note that \eqref{eq.Kw} implies that    $\mathsf r_{ess}(T^{oL}_t|_{b\mathcal B_{\mathbf W_\star}(\mathscr R^d_0)})\le e^{-{\mathfrak r_0} t}$. Indeed, by~\cite[Proposition 3.2.(e)]{Wu2004}, we have $\beta_w([T^{oL}_{t,\mathbf W_\star}]^n)\le \beta_w(T^{oL}_{t,\mathbf W_\star})^n$. 
 \medskip

\noindent
\textbf{Step 3.}
 Let us now prove \eqref{eq.Kw}. 
 We have by definition that 
\begin{equation}\label{eq.p1}
\beta_w(T^{oL}_{t,\mathbf W_\star})= \inf_{K\subset \mathscr R^d_0}\  \sup_{x\in  \mathscr R^d_0}\ \  \frac{1}{\mathbf W_\star(x)} \mathbb E_x\Big[ \mathbf W_\star(X_t)\mathbf 1_{\mathscr R^d_0\setminus K} (X_t) \, e^{  - \int_0^t  \mathbf V_{\mathbf S}(X_s)ds}   \Big].
\end{equation}
 Denote by $\tau_{\mathscr K_0}$ the first hitting time of the compact $\mathscr K_0$ for the process (where $\mathscr K_0$ is the compact set such that  \eqref{eq.cc1} holds), that is to say $\tau_{\mathscr K_0}:=\inf\{s\ge 0, X_s\in \mathscr K_0\}$. We have for all $x\in \mathscr R^d_0$ and all compact subset $K$ of $\mathscr R^d_0$:
 \begin{align}
 \nonumber
&  \frac{1}{\mathbf W_\star(x)}  \mathbb E_x\Big[ \mathbf W_\star(X_t)\mathbf 1_{\mathscr R^d_0\setminus K} (X_t) \, e^{  - \int_0^t  \mathbf V_{\mathbf S}(X_s)ds}   \Big]\\
 \nonumber
&=   \frac{1}{\mathbf W_\star(x)}  \mathbb E_x\Big[\mathbf W_\star(X_t)\mathbf 1_{\tau_{\mathscr K_0}\le t} \mathbf 1_{\mathscr R^d_0\setminus K} (X_t) \, e^{  - \int_0^t  \mathbf V_{\mathbf S}(X_s)ds}   \Big]\\
\label{eq.=1}
 &\quad + \frac{1}{\mathbf W_\star(x)}  \mathbb E_x\Big[\mathbf W_\star(X_t)\mathbf 1_{t<\tau_{\mathscr K_0} } \mathbf 1_{\mathscr R^d_0\setminus K} (X_t) \, e^{  - \int_0^t  \mathbf V_{\mathbf S}(X_s)ds}   \Big].
 \end{align}
 We now treat separately each term in the r.h.s. of \eqref{eq.=1}. 
  \medskip

\noindent
\textbf{Step 3a.}
In this step, we deal with the first term in the r.h.s. of \eqref{eq.=1}. Let  $K$ be a compact subset  of $\mathscr R^d_0$.  
%Note that 
%for all $t>0$ and $x\in \mathscr R^d$, $\mathbf 1_{ \mathscr R^d_0 \setminus K} (X_t(x))=\mathbf 1_{ \mathscr R^d\setminus K} (X_t(x))$ a.s. since $\mathbb P_x[X_t=0]=0$ due to the fact that $X_t(x)$ has a density w.r.t. the Lebesgue measure $dy$. 
Using    Hölder's inequality and the strong Markov property  of the process $(X_t,t\ge 0)$, for all $x\in \mathscr R^d_0$:
 \begin{align*}
& \frac{1}{\mathbf W_\star(x)}  \mathbb E_x\Big[\mathbf W_\star(X_t)\mathbf 1_{\tau_{\mathscr K_0}\le t} \mathbf 1_{\mathscr R^d_0\setminus K} (X_t) \, e^{  - \int_0^t  \mathbf V_{\mathbf S}(X_s)ds}   \Big] \\
 &\le \frac{\mathbb E_x\big [\mathbf W_\star^p(X_t)   e^{  - \int_0^t  \mathbf V_{\mathbf S}(X_s)ds}   \big ]^{1/p}  }{\mathbf W_\star(x)} \  \mathbb E_x\big [ \mathbf 1_{\tau_{\mathscr K_0}\le t} \mathbf 1_{ \mathscr R^d_0\setminus K} (X_t)  e^{  - \int_0^t  \mathbf V_{\mathbf S}(X_s)ds}  \big  ] ^{1/q}\\
 &\le e^{(k_{\mathbf S}/q+\mathfrak m_0/p)t}\,  \mathbb E_x\big [ \mathbf 1_{\tau_{\mathscr K_0}\le t} \mathbf 1_{ \mathscr R^d_0\setminus K} (X_t)  \big  ] ^{1/q} \\
 &=e^{(k_{\mathbf S}/q+\mathfrak m_0/p)t}\, \mathbb E_x\big [\mathbf 1_{\tau_{\mathscr K_0}\le t}\,    \mathbb P_{X_{\tau_{\mathscr K_0}}} [X_{t-\tau_{\mathscr K_0}}\in  \mathscr R^d_0\setminus K] \big ]^{1/q}.
   \end{align*}
 In addition, we have for all $x\in \mathscr R^d_0$,
  \begin{align*}
 \mathbb E_x\big [\mathbf 1_{\tau_{\mathscr K_0}\le t}\,    \mathbb P_{X_{\tau_{\mathscr K_0}}} [X_{t-\tau_{\mathscr K_0}}\in  \mathscr R^d_0\setminus K] \big ] &\le \mathbb E_x\big [\mathbf 1_{\tau_{\mathscr K_0}\le t}\,    \mathbb P_{X_{\tau_{\mathscr K_0}}} [\exists s\in [0,t], X_s \in  \mathscr R^d_0\setminus K] \big ] 
 \\
 &\le    \sup_{z\in \mathscr K_0}\mathbb P_{z}  \big [\exists s\in [0,t], X_s \in  \mathscr R^d_0\setminus K \big ] .
  \end{align*}
 %Using Proposition \ref{pr.c2} and the same arguments as those used in the second step of the proof of~\cite[Theorem 3.5]{guillinqsd}, 
 Let us now prove that 
\begin{equation}\label{eq.k=0}
\inf_{K\subset \mathscr R^d_0}\, \sup_{z\in \mathscr K_0}\mathbb P_{z} \big [\exists s\in [0,t], X_s \in  \mathscr R^d_0\setminus K \big ] =0.
\end{equation}
First of all, note that by \eqref{eq.Cc} and  Proposition~\ref{pr.c2}, the mapping
$$z\in \mathscr R^d_0\mapsto \mathbb P_z[X_{[0,t]}\in \cdot ]\in \mathcal P(\mathcal C([0,t],\mathscr R^d_0))$$
is continuous for the weak topology. 
Note indeed that \eqref{eq.Cc} and  Proposition~\ref{pr.c2} imply that 
$$\mathbb P  [\sup_{s\in [0,t]}\mathsf d_{\mathscr R^d_0}( X_s( x_n),X_s( x))\ge \epsilon   ]\to 0,$$
% use teh continuous mapping thm pour la deuxième
as $x_n\to x\in \mathscr R^d_0$ and for all $\epsilon >0$ (see \eqref{eq.DD}). 
Hence, the family of probability measures   $\{z\in \mathscr K_0, \mathbb P_z[X_{[0,t]}\in \cdot ]\}$ over $\mathcal C([0,t],\mathscr R^d_0)$  is relatively compact for the weak convergence topology and thus, is tight. 
%We can thus  conclude this step using the same arguments as those used at the end of the second step of the proof of~\cite[Theorem 3.5]{guillinqsd}. 
Consequently, for all $\epsilon>0$, there exists a compact set $A_\epsilon$ of $ \mathcal C([0,t],\mathscr R^d_0)$  such that $\sup_{z\in \mathscr K_0} \mathbb P_z[X_{[0,t]}\notin A_\epsilon]<\epsilon$. Then, introduce $K_\epsilon=\{\gamma_s, s\in [0,t], \gamma\in A_\epsilon\}$ which is a compact subset of $\mathscr R^d_0$. Therefore, we have 
\begin{align*}
\inf_{K\subset \mathscr R^d_0}\, \sup_{z\in \mathscr K_0}\mathbb P_{z} [\exists s\in [0,t], X_s \in  \mathscr R^d_0\setminus K]&\le   \sup_{z\in \mathscr K_0}\mathbb P_{z} [\exists s\in [0,t], X_s \in  \mathscr R^d_0\setminus  K_\epsilon]\\
&\le \sup_{z\in \mathscr K_0} \mathbb P_z[X_{[0,t]}\notin A_\epsilon]<\epsilon.
\end{align*} This ends the proof of~\eqref{eq.k=0} and shows that  the infimum over all compact set $K\subset \mathscr R^d$ of the  first term in the r.h.s. of \eqref{eq.=1} vanishes. 
\medskip

\noindent
\textbf{Step 3b.}
In this step, we deal with the second term in the r.h.s. of \eqref{eq.=1}.
We have that 
 \begin{align*}
    \mathbb E_x\Big[\mathbf W_\star(X_t)\mathbf 1_{t<\tau_{\mathscr K_0} } \mathbf 1_{\mathscr R^d_0\setminus K} (X_t) \, e^{  - \int_0^t  \mathbf V_{\mathbf S}(X_s)ds}   \Big] &\le   \mathbb E_x\Big[\mathbf W_\star(X_t)\mathbf 1_{t<\tau_{\mathscr K_0} } \, e^{  - \int_0^t  \mathbf V_{\mathbf S}(X_s)ds}   \Big].
 \end{align*}
 Let $(\mathsf m_t, t\ge 0)$ be defined by:
 $$\mathsf m_t:= \frac{\mathbf W_\star(X_t)}{\mathbf W_\star(X_0)} \exp \big (-\int_0^t \frac{\mathscr L^{oL}\mathbf W_\star}{\mathbf W_\star} (X_s) ds\big ).$$
This is a local martingale $(\mathsf m_t, t\ge 0)$, which turns out to be  actually a supermartingale. Hence,   it holds: 
 $$\text{ for all $x\in \mathscr R^d$, }\,    \mathbb E_x [\mathsf m_t ]\le 1.
 $$
 Thus,    using  \eqref{eq.cc1}, we have for $x\in \mathscr R^d_0$, 
 \begin{align*}
 &\frac{1}{\mathbf W_\star(x)}  \mathbb E_x\Big[\mathbf W_\star(X_t)\mathbf 1_{t<\tau_{\mathscr K_0} }   \, e^{ -  \int_0^t   \mathbf V_{\mathbf S}(X_s) ds}   \Big] \le e^{-{\mathfrak r_0}t}   \mathbb E_x [\mathbf 1_{t<\tau_{\mathscr K_0} }\mathsf m_t]\le  e^{-{\mathfrak r_0}t}.
 \end{align*}
Consequently, the second term in the r.h.s. of \eqref{eq.=1} is upper bounded by $e^{-{\mathfrak r_0}t}$. 
 In conclusion, coming back to \eqref{eq.p1} and \eqref{eq.=1}, we have proved \eqref{eq.Kw}.  
This achieves the proof of Theorem~\ref{th.Ress}. 
 \end{proof}

%%%
  \subsubsection{Proof of Theorem \ref{th.oL}}
  \label{sec.thm1-p}
  We will only prove Theorem \ref{th.oL}  when   {\rm \textbf{[c1]}}   holds and  either    {\rm \textbf{[S1]}} or {\rm \textbf{[S2]}}  holds (the other case is proved  with the same arguments but with   the Lyapunov function $\mathbf 1$). 
Let $\mathscr O$ be a subdomain of $\mathscr R^d_0$. 
 As already explained, the proof of Theorem \ref{th.oL} consists in  using \cite[Theorem 4.1]{guillinqsd} (namely  Theorem~\ref{thm41} above). It is divided into two steps. In the first step, we check that $Q^{oL}_1$ satisfies all the required conditions to use   Theorem \ref{thm41}. We then apply Theorem \ref{thm41} to deduce all the assertions of Theorem~\ref{th.oL} except three points which remain to be proved (see Items \textbf{i}, \textbf{ii}, and \textbf{iii} in \textbf{Step 2}). \textbf{Step 2} is dedicated to the proofs of these  three points.  
\medskip

\noindent
 \textbf{Step 1.} 
 We start by checking that $Q^{oL}_1$ satisfies all the fourth  conditions of   Theorem \ref{thm41} with the Lyapunov function $\mathbf W_\star$ (see \eqref{eq.Ww}) on the state space $\mathscr O$.

  First of all, the transition kernel    $Q^{oL}_1$ is Feller over $\mathscr O$  and $Q^{oL}_1(x, O)>0$ for all $x\in \mathscr O$ and all non-empty open subset $  O$ of $\mathscr O$ (see Theorem \ref{th.SF1} and Proposition \ref{pr.TI}). 
We now prove that $Q^{oL}_1$ has a spectral gap on $b\mathcal B_{\mathbf W_*}(\mathscr O)$.

  \medskip

\noindent
 \textbf{Step 1.a.} In this step we check that the non-killed Feynman-Kac operator $T_1^{oL}$  is a bounded operator on $b\mathcal B_{\mathbf W_\star}(\mathscr R^d_0)$ and that  $T^{oL}_{1,\mathbf W_\star}$ (see \eqref{eq.C0} and \eqref{eq.Ttt}), which is then well defined,  satisfies  (see  \eqref{eq.betaw}) 
\begin{equation}\label{eq.Tb}
\beta_w(T^{oL}_{1,\mathbf W_\star}|_{b\mathcal B (\mathscr R^d_0)})=0.
\end{equation}
 To this end, we apply Theorem \ref{th.Ress}. 
By \eqref{eq.Lyy}, 
 there exist two sequences of positive constants $(\mathfrak r_n)$ and $(\mathfrak b_n)$ where $\mathfrak r_n\to +\infty$, and an increasing sequence of compact subsets $(\mathscr K_n)$ of $\mathscr R^d_0$, such that for all $x\in \mathscr R^d_0$, 
$$
  \frac{\mathscr L^{oL} \mathbf W (x) }{ \mathbf W (x) }- p \mathbf V_{\mathbf S}(x)\le - \mathfrak r_n + \mathfrak b_n \mathbf 1_{\mathscr K_n}(x).
$$
Hence,  over $\mathscr R^d_0$, it holds (see \eqref{eq.Ww}): 
\begin{align*}
 (\mathscr L^{oL}-\mathbf V_{\mathbf S})  {\mathbf W_\star}  \le \frac 1p {\mathbf W}^{1/p-1}  (\mathscr L^{oL}-p\mathbf V_{\mathbf S})  {\mathbf W} &\le \big  [-\frac{\mathfrak r_n}{p}   +\frac{\mathfrak b_n}{p} \mathsf 1_{\mathscr K_n} \big ]{\mathbf W}^{1/p}\\
 &= \big [-\frac{\mathfrak r_n}{p}   +\frac{\mathfrak b_n}{p} \mathsf 1_{\mathscr K_n} \big ]{\mathbf W_\star}.
\end{align*}
Note that the first inequality above can be obtained by a direct computations, but it is actually a consequence of a more general result~\cite[Proposition 5.1]{guillinqsd}.
On the other hand, by \eqref{eq.Lyy}, there exists $\mathfrak m_0>0$ such that  on $\mathscr R^d_0$,  
$$(\mathscr L^{oL}-\mathbf V_{\mathbf S})  {\mathbf W}^p_\star=(\mathscr L^{oL}-\mathbf V_{\mathbf S})  \mathbf W\le \mathfrak m_0    \mathbf W=\mathfrak m_0    \mathbf W^p_\star,$$
which is precisely \eqref{eq.cc2}. 
Moreover, applying  Theorem \ref{th.Ress}, we then have that $T^{oL}_1$  is a bounded operator on $b\mathcal B_{\mathbf W_\star}(\mathscr R^d_0)$  (see \eqref{eq.Q1}), and in addition, we also have that \eqref{eq.Tb} holds and $\mathsf r_{ess}(T^{oL}_1|_{b\mathcal B_{\mathbf W_\star}(\mathscr R^d_0)})=0$.  Note that \eqref{eq.Tb} actually holds not only for $t=1$ but also  for all $t>0$.  
  \medskip

\noindent
 \textbf{Step 1.b.} In this step we prove the following spectral gap: 
\begin{equation}\label{eq.spga}
0=\mathsf r_{ess}(Q^{oL}_1|_{b\mathcal B_{\mathbf W_\star}(\mathscr O)})<\mathsf r_{sp}(Q^{oL}_1|_{b\mathcal B_{\mathbf W_\star}(\mathscr O)}).
\end{equation}
 First of all, recall that \eqref{eq.cc2}  (which is proved just above in \textbf{Step 1.a}) implies  \eqref{eq.=p-} and  \eqref{eq.Q1}. Then, because $Q^{oL}_1\le T^{oL}_1$,  Assumption $\mathfrak 3$ in Section \ref{sec.thm41} is satisfied  for $Q^{oL}_1$ and moreover, $Q^{oL}_1$ is a bounded kernel over $b\mathcal B_{\mathbf W_\star}(\mathscr O)$. 
Then, one can consider  the bounded kernel $Q^{oL}_{1,\mathbf W_\star}$ on $b\mathcal B(\mathscr O)$ defined by
 $$Q^{oL}_{1,\mathbf W_\star}(x,dy)= \frac{\mathbf W_\star(y)}{\mathbf W_\star(x)} Q^{oL}_1(x,dy),$$
which has the  same essential spectral radius as $Q^{oL}_1|_{b\mathcal B_{\mathbf W_\star}(\mathscr O)}$.  Because  
$$Q^{oL}_{1,\mathbf W_\star}\le T^{oL}_{1,\mathbf W_\star},$$ it follows that, when considering $Q^{oL}_{1,\mathbf W_\star}$ as a kernel operator over $\mathscr R_0^d$: 
\begin{enumerate}
\item[-]  $Q^{oL}_{1,\mathbf W_\star}$   satisfies  \textbf{(A1)} since  $T^{oL}_{1,\mathbf W_\star}$ satisfies this condition (see the second step of the proof of Theorem \ref{th.Ress}).

\item[-] By \eqref{eq.Tb}, $\beta_w(Q^{oL}_{1,\mathbf W_\star})=0$. 
 
\end{enumerate}
Hence, we have  thanks to \eqref{eq.Gelfand}, $\mathsf r_{ess}(Q^{oL}_{1,\mathbf W_\star}|_{b\mathcal B(\mathscr R_0^d)})=0$ and  consequently:
$$  \mathsf r_{ess}(Q^{oL}_1|_{b\mathcal B_{\mathbf W_\star}(\mathscr O)})=\mathsf r_{ess}(Q^{oL}_1|_{b\mathcal B_{\mathbf W_\star}(\mathscr R_0^d)})=0.$$
% ress(P_t^D|bB(O))= ress(P_t^D|bB(S)), où S= R^{d}_0 est l'espace total. Facile à voir en revenant à la définition du spectre essentiel (Fredholm). On montre en effet que pour lambda\neq 0, \lambda \in  \sigma_ess(P_t^D|bB(O)) ssi  \lambda \in  \sigma_ess(P_t^D|bB(S))
\begin{sloppypar}
Note that  $Q^{oL}_{1,\mathbf W_\star}$ is  strongly Feller  as   $Q^{oL}_1$.  Indeed, to see it, one can argue as follows. Recall $\mathbf W_\star$ is continuous. For all compact set $K$ of $\mathscr O$, $f\in b\mathcal B(\mathscr O)$, we have, setting  $f_n=   ( [\mathbf W_\star\wedge n] /\mathbf W_\star)f\in  b\mathcal B(\mathscr O)$ ($n\ge 1$), $\sup_{x\in K}|Q_1^{oL}(\mathbf W_\star f)(x)-Q_1^{oL}(\mathbf W_\star f_n)(x)|\le \sup_{x\in K}|T_1^{oL}\mathbf W_\star^p(x)|^{1/p} \sup_{x\in K}|T_1^{oL}|f-f_n|^q(x)|^{1/q}$ which goes to $0$ as $n\to +\infty$ by \eqref{eq.=p-} and Dini's theorem.  Hence $Q_1^{oL}(\mathbf W_\star f)$ is continuous over $\mathscr O$. This proves that  $Q^{oL}_{1,\mathbf W_\star}$ is  strongly Feller.    

 Finally, we claim that  
\begin{equation}\label{eq.RSP}
\mathsf r_{sp}(Q^{oL}_1|_{b\mathcal B_{\mathbf W_\star}(\mathscr O)})>0.
\end{equation}
 Let us prove  \eqref{eq.RSP}.
    Let  $r>k_{\mathbf S}$  and   $\phi(\cdot):=\int_{0}^{+\infty} e^{-rs} Q_s^{oL}(x_*,\cdot)ds$ where $x_*\in \mathscr O$ is arbitrary. If $\phi(\mathcal A)>0$,  $\exists s_{\mathcal A}>0$ such that $Q_{s_{\mathcal A}}^{oL}(x_*,\mathcal A)>0$. By Theorem~\ref{th.SF1}, there exists a  neighborhood $U_*\neq \emptyset$ of $x_*$ in $\mathscr O$ and  $c_*>0$ such that $Q_{s_{\mathcal A}}^{oL}(y ,\mathcal A)\ge c_*$ for all $y\in U_*$. Let $n>  s_{\mathcal A}$ and write $n =t+{s_{\mathcal A}}$ with $t>0$. Let $x\in \mathscr O$. We have $Q_n^{oL}(x,\mathcal A)= \int_{\mathscr O} Q_t^{oL}(x,dy)Q_{s_{\mathcal A}}^{oL}(y,\mathcal A)\ge c_* Q_t^{oL}(x, U_*) >0$ by  Proposition \ref{pr.TI}. In particular,   $Q_1^{oL}$ is $\phi$-irreducible~\cite[Definition 2.2]{Nummelin}. 
 Let $\psi$ be    a maximal irreducibility measure for  $Q_1^{oL}$. By~\cite[Theorem 2.1]{Nummelin}, there exist $m\ge 1$, $\beta>0$, $\mathbf s: \mathscr O\to \mathscr R_+$ measurable  with $\psi(\mathbf s)>0$ and a   measure $\nu\ge 0$ over $(\mathscr O,\mathcal B(\mathscr O))$ with  $\nu(\mathscr O)>0$ such that    
\begin{equation}\label{eq.Lpetite}
Q_m^{oL} (x, \mathcal V)\ge \beta \, \mathbf s(x)\,  \nu(\mathcal V),\ \ \forall x\in \mathscr O,\, \forall  \, \mathcal V \in \mathcal B(\mathscr O).
\end{equation}
Note that in particular $\nu(\mathscr O)<+\infty$ and $\mathbf s\in b\mathcal B(\mathscr O)$. 
%Up to changing, for some $n\ge 1$ large enough, $\mathbf s$ into $Q_n^{oL} \mathbf s$ and $m$ into $m+n$ in \eqref{eq.Lpetite}, 
One can assume    $\nu(\mathbf s)>0$. Indeed, $Q_n^{oL} \mathbf s$ is small by  multiplying  both side of \eqref{eq.Lpetite} by $Q_n^{oL}$. Let $\mathcal A=\{\mathbf s>0\}$. Since $\psi(\mathcal A)>0$, it holds by the previous analysis, for all $x\in \mathscr O$ and $n> s_{\mathcal A}$,  $Q_n^{oL}(x,\mathcal A)>0$  and thus $Q_n^{oL}\mathbf s(x)>0$. In particular,  $\nu(Q_n^{oL}\mathbf s)>0$.  
 
%The rest of the proof of \eqref{eq.RSP} is then rather standard. 
Set   $Q=Q_m^{oL} $. 
 We  have by \eqref{eq.Lpetite}, for every $f\in b\mathcal B_{\mathbf W_\star}(\mathscr O)$, $f\ge 0$,  $\nu (Qf)\ge \beta \nu(\mathbf s)   \nu(f) $ (hence, $\nu (Q^qf)\ge \nu(f)\beta^q\nu (\mathbf s)^q $, $\forall q\ge 1$). Let $x\in \mathscr O$ such that $\mathbf s(x)>0$. Since  $\Vert \mathbf 1\Vert_{b\mathcal B_{\mathbf W_\star}(\mathscr O)} \le 1$:
\begin{align*}
\Vert Q^k\Vert_{\mathcal L(b\mathcal B_{\mathbf W_\star}(\mathscr O))}\ge \frac{( Q^k \mathbf 1)(x)}{\mathbf W_\star(x)} \ge  \beta \mathbf s(x) \frac{ \nu( Q^{k-1} \mathbf 1)}{\mathbf W_\star(x)}\ge \beta^k \mathbf s(x) \,  \frac{ \nu(\mathbf 1)\nu (\mathbf s)^{k-1}}{\mathbf W_\star(x)}, \ \forall k\ge 1.
\end{align*} 
By Gelfand's formula, $\mathsf r_{sp}(Q|_{b\mathcal B_{\mathbf W_\star}(\mathscr O)}) \ge \beta \nu(\mathbf s)>0$. Equation \eqref{eq.RSP} follows from the fact that $\mathsf r_{sp}(Q^{oL}_1|_{b\mathcal B_{\mathbf W_\star}(\mathscr O)})^m=\mathsf r_{sp}(Q|_{b\mathcal B_{\mathbf W_\star}(\mathscr O)}) $.  
 The  spectral  gap \eqref{eq.spga}   is thus proved.
 \end{sloppypar} 
  \medskip

\noindent
 \textbf{Step 1.c.}
 Thanks to the previous steps, we can  apply  Theorem~\ref{thm41}  with the operator  $Q^{oL}_1$ on $b\mathcal B_{\mathbf W_\star}(\mathscr O)$. 
We then deduce, setting 
$$\lambda_p:=-\log \mathsf r_{sp}(Q^{oL}_1 |_{b\mathcal B_{\mathbf W_\star}(\mathscr O)})\in \mathscr R,$$ that there is a unique couple $(\rho_p, \varphi_p)$ such that all the following conditions   hold:
  \begin{enumerate}
 \item[ -]   The measure $\rho_p$ is a probability measure on $\mathscr O$, $\rho_p(\mathbf W_\star)<+\infty$, $\varphi_p\in \mathcal C_{b{\mathbf W_\star}}(\mathscr O)$, and $\rho_p(\varphi_p)=1$.
\item[-] It holds $
\rho_p   Q^{oL}_1    = e^{-\lambda_p}\rho_p $ and $  Q^{oL}_1  \varphi_p= e^{-\lambda_p}\varphi_p$.  
\item[-] There exist $C\ge 1$ and $\delta>0$, such that for  all $f\in b\mathcal B_{\mathbf W_\star}(\mathscr O)$  and $n\ge 1$, 
\begin{equation}\label{eq.cvbl}
 \big \|e^{n\lambda_p }Q^{oL}_n f -\rho_p(f) \varphi_p\big \| _{b\mathcal B_{\mathbf W_\star}(\mathscr O)} \le C e^{-\delta n}\|f\| _{b\mathcal B_{\mathbf W_\star}(\mathscr O)}.
\end{equation}
\end{enumerate}
In addition $\rho  _p(O)>0$ for all nonempty open subsets $O$ of $\mathscr O$ and $\varphi_p$ is positive everywhere  on $\mathscr O$. In particular, one has: 
\begin{enumerate}
\item[-] If  $\nu\in \mathcal M_{b\mathbf W_\star}(\mathscr O) $  satisfies for some $\eta \in \mathscr R$, $\nu Q^{oL}_1= \eta \nu$ and $\nu(\varphi_p)\ne0$, then $\eta=e^{-\lambda_p}$ and $\nu=c\rho_p$ for some constant $c$.
 \item[-] If $g\in b\mathcal B_{\mathbf W_\star}(\mathscr O)$ satisfies for some $\eta \in \mathscr R$,  $Q^{oL}_1g=\eta g$  and  $\rho_p(g)\ne 0$, then $\eta=e^{-\lambda_p}$ and $g=c\varphi_p$ for some constant $c$.
 \end{enumerate}
The space $\mathcal M_{b\mathbf W_\star}(\mathscr O)$ is defined by $\mathcal M_{b\mathbf W_\star}(\mathscr O)=  \{\nu\in \mathcal M_{b}(\mathscr O), \mathbf W_\star( x)\nu(d x) \in \mathcal M_{b}(\mathscr O)  \}$. 
\medskip

\noindent
 \textbf{Step 2.} End of the proof of Theorem \ref{th.oL}. 
   We claim that for all $p>1$:
  \begin{enumerate}
 \item[-] The measure $\rho_p$ is the unique   q.s.d. in $b \BB_{\mathbf W_*}(\mathscr O)$ for  $(P_t^{oL},t\ge 0)$ (see \eqref{eq.FK-sur2} and Definition \ref{def.QSD}).   
 In addition, 
 \begin{equation}\label{eq.rspOl}
 \mathsf r_{sp}(Q_t^{oL}|_{b\BB_{\mathbf W_*}(\mathscr O)})=\lambda(t)=e^{-\lambda_{p} t}, \ \forall t\ge 0.
 \end{equation} 
 \item [-] 
 There exists $\delta,C>0$ such that for all $\nu\in \mathcal P_{\mathbf W_\star}(\mathscr O) $ and $f\in  b \BB_{\mathbf W_*}(\mathscr O)$:
 $$
\big | \nu P_t^{oL}f-\rho_{p}(f)\big |\le C e^{-\delta t} \frac{\nu({\mathbf W_*})}{\nu(\varphi_p)}\|f\|_{b\BB_{\mathbf W_*}(\mathscr O)},\ \  \forall t>0.
$$ 
   \end{enumerate}
We sketch the proofs of  the previous claims (see the second step of the proof of \cite[Theorem 5.3]{guillinqsd} for more details). To show  the first claim, one first proves that for all $t\ge 0$, there exists $\lambda(t)$ such that $\rho_{p}Q_t^{oL} = \lambda(t) \rho_{p}$. By the semigroup property, $\lambda(t+s)=\lambda(t)   \cdot\lambda(s)$ for all $s,t\ge 0$. Since $\lambda(1)=e^{-\lambda_{p}}$,  one deduces that $
\lambda(t)=e^{-\lambda_p t}$, $t\ge 0$. Because $\rho_p Q_t^{oL}(\mathscr O) = e^{-\lambda_p t}$, one also deduces that $\rho_{p}$ is a q.s.d.  (namely that $\rho_p P_t^{oL}(\cdot)= \rho_p(\cdot)$, $t\ge 0$). 
Then, one proves~\eqref{eq.rspOl} and the uniqueness of the q.s.d. in $b \BB_{\mathbf W_*}(\mathscr O)$ using   item $\mathfrak a$ in Theorem~\ref{thm41} (applied to $Q_t^{oL}$). 
% appliqué à Q^oL_t, où t>0 fixé
% On check les HP du THM 7 (Perron-Frob):
%  Q^oL_{t} est TI et SF pour t>0: prouvé. 
%  r_sp(Q^oL_t) >0 car Q^oL_t est TI et SF: vu avec Nummelin
% r_ess(P^D_{t}) =0: vu un peu plus haut
% Donc trou spectral -> on peut appliquer le THM 4.1
The last claim follows easily from \eqref{eq.cvbl}.

 It thus remains to show: 
 
 \begin{enumerate}
 \item[\textbf{i}.] The compactness  of $Q^{oL}_t:b\mathcal B_{\mathbf W^{1/p}}(\mathscr O)\to b\mathcal B_{\mathbf W^{1/p}}(\mathscr O)$, for $t>0$.
 \item[\textbf{ii}.] The fact that  $\lambda_p \in [i_{\mathbf V_{\mathbf S},\mathscr O},+\infty)$ 
 (where we set $ i_{\mathbf V_{\mathbf S},\mathscr O}:=\inf_{\mathscr O} \mathbf V_{\mathbf S}$).
  \item[\textbf{iii}.] If moreover $\mathscr R^d_0\setminus \overline{\mathscr O}$ is non-empty or  if  $\mathscr O=\mathscr R^d_0$, then  $\lambda_p>i_{\mathbf V_{\mathbf S},\mathscr O}$. 
  \end{enumerate}
  %%%
  
 Note that Item \textbf{ii} is obvious  (i.e. $\lambda_p\ge  i_{\mathbf V_{\mathbf S},\mathscr O} $) since 
  $$ \mathbb E_{\rho_p}\Big[ e^{  - \int_0^t  \mathbf V_{\mathbf S}(X_s)ds}   \mathbf 1_{t<\sigma_{\mathscr O}}\Big] = \rho_p (Q_t \mathbf 1) = e^{-\lambda_p t}  \rho_p(\mathbf 1)= e^{-\lambda_p t}.$$

\noindent
 \textbf{Step 2.a}.  Proof of Item \textbf i.  Let us prove that  for all $t>0$,    $Q^{oL}_t:b\mathcal B_{\mathbf W^{1/p}}(\mathscr O)\to b\mathcal B_{\mathbf W^{1/p}}(\mathscr O)$ (or equivalently,  $Q^{oL}_{t,\mathbf W_\star}$ over $b\mathcal B(\mathscr O)$) is compact.  Recall that for all  $t\ge 0$, $Q^{oL}_{t,\mathbf W_\star}\le T^{oL}_{t,\mathbf W_\star}$ over $\mathscr R^d_0$.  By \eqref{eq.Tb} (which actually holds for all $t>0$), $\beta_w(Q^{oL}_{t,\mathbf W_\star})=0$ for all $t>0$. We also have that (see the lines just above \eqref{eq.rr}), $\beta_\tau (\mathbf 1_{K} Q^{oL}_{t,\mathbf W_\star})=0$ for all compact subset $K$ of $\mathscr R^d_0$, $t>0$. For any $t>0$, write $t=3s$,  with $s>0$. Then, using~\cite[Proposition 3.2 (f)]{Wu2004}, we have
$\beta_\tau (Q^{oL}_{2s,\mathbf W_\star})=0$. Consequently, thanks to~\cite[Proposition 3.2 (g)]{Wu2004}, $Q^{oL}_{3s,\mathbf W_\star}=Q^{oL}_{s,\mathbf W_\star}Q^{oL}_{2s,\mathbf W_\star}$ is compact over $b\mathcal B_{\mathbf W^{1/p}}(\mathscr R_0^d)$ and thus clearly also on $b\mathcal B_{\mathbf W^{1/p}}(\mathscr O)$. 
  \medskip

\noindent
 \textbf{Step 2.b}.  Proof of Item \textbf{iii}.  
  Let us  prove  that $\lambda_p>i_{\mathbf V_{\mathbf S},\mathscr O}$ when $\mathscr R^d_0\setminus \overline{\mathscr O}$ is non-empty  or $\mathscr O=\mathscr R^d_0$.  Assume that it is not the case, i.e.  that $\lambda_p=i_{\mathbf V_{\mathbf S},\mathscr O}$. Then, since for all $t\ge 0$,  $\rho _p(Q_t^{oL}\mathbf 1)=e^{-\lambda_p t}\mathbf 1 =e^{-i_{\mathbf V_{\mathbf S},\mathscr O} t}\mathbf 1 $, we get that $\rho _p( e^{-i_{\mathbf V_{\mathbf S},\mathscr O} t}\mathbf 1-Q^{oL}_t\mathbf 1)=0$. Since the function 
 $e^{-i_{\mathbf V_{\mathbf S},\mathscr O} t}\mathbf 1-Q^{oL}_t\mathbf 1$ is non negative and continuous over $\mathscr O$
 together with the fact that $\rho _p$ charges all non-empty open subsets of $\mathscr O$, we deduce that $e^{-i_{\mathbf V_{\mathbf S},\mathscr O} t}\mathbf 1=Q^{oL}_t\mathbf 1$ on $\mathscr O$, i.e. for all $t\ge 0$ and $x\in \mathscr O$, 
\begin{equation}\label{eq.=I}
\mathbb E_x\big [e^{-\int_0^t\mathbf V_{\mathbf S}(X_s)ds}\mathbf 1_{t<\sigma^{oL}_{\mathscr O}}\big ]=e^{-i_{\mathbf V_{\mathbf S},\mathscr O} t}.
 \end{equation}
 
Let us first consider the case when $\mathscr R^d_0\setminus \overline{\mathscr O}$ is non-empty. Then, using \eqref{eq.=I}, we have  $e^{-i_{\mathbf V_{\mathbf S},\mathscr O} t} \le e^{-i_{\mathbf V_{\mathbf S},\mathscr O} t} \mathbb P_x[t<\sigma^{oL}_{\mathscr O}]$ so that for all $t>0$ and $x\in \mathscr O$, $ \mathbb P_x[t<\sigma^{oL}_{\mathscr O}]=1$ 
and thus  
$$\mathbb P_x[\sigma^{oL}_{\mathscr O}=+\infty]=1, \forall x\in \mathscr O.$$ 
However, as $\mathscr R^d_0\setminus \overline{\mathscr O}$ is open, there exists a non-empty open ball $B\subset \mathscr R^d_0\setminus \overline{\mathscr O}$. Moreover,  there exist   $x_0\in \mathscr O$ and   $t_0>0$ (actually this is true for all $x_0\in \mathscr O$ and $t_0>0$) such that 
$$\mathbb P_{x_0}[X_{t_0}\in B]>0.$$
Hence,  $0<\mathbb P_{x_0}[X_{t_0}\in B]\le\mathbb P_{x_0}[ \sigma^{oL}_{\mathscr O}\le t_0]$. A contradiction. 
 
Assume now that   $\mathscr O=\mathscr R^d_0$ so that by Lemma \ref{le.finiteLsur}, for all $x\in \mathscr R^d_0$ a.s. $$\sigma^{oL}_{\mathscr R^d_0}(x)=+\infty.$$
Using \eqref{eq.=I}, we then have for all $t\ge 0$, 
$$\mathbb E_x  [e^{-\int_0^t\mathbf V_*(X_s)ds}  ]=1,$$ 
 where $\mathbf V_*=\mathbf V_{\mathbf S}-i_{\mathbf V_{\mathbf S},\mathscr R^d_0}\ge 0$. Consequently, for all $t\ge 0$ and $x\in \mathscr O$, $\mathbb P_x  [\int_0^t\mathbf V_*(X_s)ds=0  ]=1$. Then,     $
 \mathbb P_x  [ \mathbf V_*(X_s) =0,  \text{for almost every }s\in [0,t] ]=1$. 
 Due to the fact that  $\mathbf V_*$ is continuous over $\mathscr R^d_0$ and since a.s.  the trajectories of the process $(X_s,s\ge 0)$ are also continuous, for all $t\ge 0$ and all $x\in\mathscr O$ (actually \textit{càdlàg} sample paths would be enough), it holds $\mathbb P_x  [ \mathbf V_*(X_s) =0,  \text{for all }s\in [0,t) ]=1$. 
   In particular, for all $s>0$ and all $x\in\mathscr O$, 
     \begin{equation}\label{eq.Co}
  \mathbb P_x  [ \mathbf V_*(X_s) =0 ]=1.
   \end{equation} 
On the other hand,   since $\mathbf V_{\mathbf S}\neq i_{\mathbf V_{\mathbf S},\mathscr R^d_0}$ and $ \mathbf V_{\mathbf S}$ is continuous,  there exist $c>0$ and a  nonempty   ball $ B \subset \mathscr R^d_0$ such that $ \mathbf V_{\mathbf S}\ge i_{\mathbf V_{\mathbf S},\mathscr R^d_0}+c$ over $ B $. Furthermore,     there exist     $x_0\in \mathscr O$ and  $s_0>0$ (actually this is true for all $x_0$ and $s_0>0$) such that,  
$$\mathbb P_{x_0}[X_{s_0}\in   B] >0.$$  This contradicts \eqref{eq.Co} and shows that $\lambda_p>i_{\mathbf V_{\mathbf S},\mathscr O}$.  
The proof of Theorem \ref{th.oL} is thus complete.

 %%%%

 %%%

 \subsection{A general result and proof of Theorem \ref{th.L}}
 \label{sec.alp}

 \subsubsection{A general result}
 In view of the proof of Theorem~\ref{th.oL}, we have the following result, where we use the notation and the assumptions of Section \ref{sec.Xx} (recall in particular  \eqref{eq.C--1} and \eqref{eq.C--2}).  Assume that the process 
 $(\mathfrak X_t, t\ge 0)$ satisfies the following conditions (below $(U^{\mathfrak X}_t,t\ge 0)$ denotes the semigroup\footnote{Defined by $U^{\mathfrak X}_tf(\mathfrak z)=\mathbb E_{\mathfrak z}[f(\mathfrak X_t)]$, $\mathfrak z\in \mathscr M$ and $f\in b\mathcal B(\mathscr M)$.} of the process  $(\mathfrak X_t, t\ge 0)$):
 
\begin{enumerate}
 \item[]\textbf{[SF$_1$]} For all $t>0$ and $f\in b\mathcal B(\mathscr M)$, the mapping  $\mathfrak z\in\mathscr M \mapsto T^{\mathfrak X}_tf(\mathfrak z):=\mathbb E_{\mathfrak z} [f(\mathfrak X_t)   \, e^{  - \int_0^t  \mathbf V (\mathfrak X_s)ds}  ]$ is continuous.  
 
 \item[]\textbf{[SF$_2$]} For all $t>0$ and $f\in b\mathcal B(\mathscr V)$,  $\mathfrak z\in \mathscr V \mapsto Q^{\mathfrak X}_tf(\mathfrak z)$ is continuous.  
 
     \item[] \textbf{[Ti]} For all $t>0$, $\mathfrak z \in  \mathscr V$ and all nonempty open subset $O$ of $\mathscr V$, $Q^{\mathfrak X}_t(\mathfrak z,O)>0$.

      \item[]  \textbf{[L$_{{\rm yap}}$]}  There exists a continuous function $\mathbf W:\mathscr M\to [1,+\infty)$ in the extended domain $\mathbb D_e(\mathscr L)$ of the generator\footnote{See~\cite[Chapter VII]{revuz2013continuous},~\cite{davis1993markov}, or \cite{guillinqsd} and references therein for a definition.}   $\mathscr L $ of $(U^{\mathfrak X}_t,t\ge 0)$  such that for all $p>1$, there exist 
        positive constants $(\mathfrak r_n)$ and $(\mathfrak b_n)$ where $\mathfrak r_n\to +\infty$, and an increasing sequence of compact subsets $(\mathscr K_n)$ of $\mathscr M$, such that for all $\mathfrak z \in \mathscr M$, 
$$\frac{ \mathscr L  \mathbf W(\mathfrak z) }{\mathbf W(\mathfrak z) } - p\mathbf V (\mathfrak z)  \le - \mathfrak r_n + \mathfrak b_n \mathbf 1_{\mathscr K_n}(\mathfrak z).$$
      
%      such that for all $\mathfrak z\in \mathscr M $ satisfying $\mathfrak z\to \partial \mathscr M$ and, if $\mathscr M$ is unbounded, $|\mathfrak z|\to +\infty$, it holds for all $c>0$:  
      \item[] \textbf{[C$_{{\rm traj}}$]} For all $t\ge 0$, the mapping  $\mathfrak z \in \mathscr M  \mapsto \mathbb P_{\mathfrak z}[\mathfrak X_{[0,t]} \in \cdot ]\in \mathcal P(\mathcal D([0,t],\mathscr M))$ is continuous for the weak topology. 
       \end{enumerate}
       
       %%%
        Note also that thanks to \eqref{eq.C--1}, {\rm\textbf{[Ti]}} is equivalent to the fact that $(\mathfrak X_t,t\in [0,\sigma^\mathfrak X_{\mathscr V}))$ is topologically irreducible over $\mathscr V$. 
 Then, under the five assumptions above, we have the following result whose proof is left to the reader since it is a direct generalization of the proof of Theorem~\ref{th.oL}.   
 %%%%

 \begin{thm}\label{th.G}
 Assume {\rm \textbf{[SF$_1$]}}, {\rm \textbf{[SF$_2$]}}, {\rm \textbf{[Ti]}}, {\rm \textbf{[L$_{{\rm yap}}$]}}, and {\rm \textbf{[C$_{{\rm traj}}$]}}. 
 Then, for all $p>1$, the killed Feynman-Kac semigroup $(Q^{\mathfrak X}_t,t\ge 0)$   is $\mathbf W^{1/p}$ compact-ergodic over $\mathscr V$. In addition, if:
 \begin{itemize}
 \item {\rm \textbf{[O1]}}  $\mathbb P_{\mathfrak z^*}[\sigma^\mathfrak X_{\mathscr V}<\infty]>0$ for some  $\mathfrak z^*\in \mathscr V$, or, 
   \item {\rm \textbf{[O2]}}  if  $\mathscr V=\mathscr M$ and $\mathbf V$ is a non constant  function  over $\mathscr M$,
    \end{itemize}
    then $\lambda_p>  \inf_{\mathscr V} \mathbf V$  where $\lambda$
 is the principal eigenvalue of $(Q^{\mathfrak X}_t,t\ge 0)$ in   $ b\mathcal B_{\mathbf W^{1/p} }(\mathscr V)$.
  \end{thm}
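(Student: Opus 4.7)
The plan is to follow closely the strategy of the proof of Theorem \ref{th.oL}: I will apply the Perron--Frobenius theorem \cite[Theorem 4.1]{guillinqsd} to $Q^{\mathfrak X}_1$ acting on $b\mathcal B_{\mathbf W^{1/p}}(\mathscr V)$, with $\mathbf W_\star := \mathbf W^{1/p}$ as the Lyapunov weight. Three ingredients are needed: (i) $Q^{\mathfrak X}_1$ is Feller on $\mathscr V$, given directly by \textbf{[SF$_2$]}; (ii) $Q^{\mathfrak X}_1$ is topologically irreducible on $\mathscr V$, given by \textbf{[Ti]}; and (iii) a spectral gap $\mathsf r_{ess}(Q^{\mathfrak X}_1|_{b\mathcal B_{\mathbf W_\star}(\mathscr V)}) < \mathsf r_{sp}(Q^{\mathfrak X}_1|_{b\mathcal B_{\mathbf W_\star}(\mathscr V)})$. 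The heart of the argument is (iii); once it is in place, the remainder is a routine adaptation of Steps 1.c and 2 of the proof of Theorem \ref{th.oL}.

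For the spectral gap I will prove an abstract analog of Theorem \ref{th.Ress}. Starting from \textbf{[L$_{{\rm yap}}$]}, the elementary chain
$$(\mathscr L - \mathbf V)\mathbf W_\star \le \tfrac{1}{p}\mathbf W_\star^{1-p}(\mathscr L - p\mathbf V)\mathbf W \le \bigl[-\tfrac{\mathfrak r_n}{p} + \tfrac{\mathfrak b_n}{p}\mathbf 1_{\mathscr K_n}\bigr]\mathbf W_\star,$$
together with the corresponding estimate for $\mathbf W_\star^p = \mathbf W$ obtained by applying \textbf{[L$_{{\rm yap}}$]} with a slightly larger exponent, shows that $T^{\mathfrak X}_t$ is bounded on $b\mathcal B_{\mathbf W_\star}(\mathscr M)$. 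Reproducing Step 3 of the proof of Theorem \ref{th.Ress} -- supermartingale control outside $\mathscr K_n$, strong Markov splitting at the hitting time $\tau_{\mathscr K_n}$, and tightness of the path laws starting from $\mathscr K_n$ -- yields $\beta_w(T^{\mathfrak X}_{1,\mathbf W_\star}) \le e^{-\mathfrak r_n/p}$ for every $n$, hence $\beta_w(T^{\mathfrak X}_{1,\mathbf W_\star}) = 0$. Since $Q^{\mathfrak X}_{1,\mathbf W_\star} \le T^{\mathfrak X}_{1,\mathbf W_\star}$ and $Q^{\mathfrak X}_{1,\mathbf W_\star}$ is strong Feller (deduced from \textbf{[SF$_2$]} via the H\"older/Dini truncation argument of Step 1.b in the proof of Theorem \ref{th.oL}), condition \textbf{(A1)} holds and the Gelfand--Nussbaum formula \eqref{eq.Gelfand} gives $\mathsf r_{ess}(Q^{\mathfrak X}_1|_{b\mathcal B_{\mathbf W_\star}(\mathscr V)}) = 0$. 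Positivity of $\mathsf r_{sp}$ follows from \textbf{[SF$_2$]} and \textbf{[Ti]} as in Step 1.b. Perron--Frobenius then produces the couple $(\rho_p, \varphi_p)$ with exponential convergence in total variation; compactness of each $Q^{\mathfrak X}_t$ for $t > 0$ follows from the $\beta_\tau$ argument of Step 2.a, applied to $Q^{\mathfrak X}_{3s,\mathbf W_\star} = Q^{\mathfrak X}_{s,\mathbf W_\star} Q^{\mathfrak X}_{2s,\mathbf W_\star}$.

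The strict inequality $\lambda_p > \inf_{\mathscr V}\mathbf V$ is obtained verbatim from Step 2.b of the proof of Theorem \ref{th.oL}. Assuming equality and testing $\rho_p(Q^{\mathfrak X}_t\mathbf 1) = e^{-\lambda_p t}$ against the continuity of $Q^{\mathfrak X}_t\mathbf 1$ (from \textbf{[SF$_2$]}) and the full-support property of $\rho_p$ forces
$$\mathbb E_{\mathfrak z}\Bigl[e^{-\int_0^t \mathbf V(\mathfrak X_s)ds}\mathbf 1_{t < \sigma^{\mathfrak X}_{\mathscr V}}\Bigr] = e^{-(\inf_{\mathscr V}\mathbf V)\, t}, \quad \forall\, \mathfrak z \in \mathscr V.$$
Under \textbf{[O1]} this forces $\mathbb P_{\mathfrak z}[\sigma^{\mathfrak X}_{\mathscr V} = +\infty] = 1$ for every $\mathfrak z$, contradicting $\mathbb P_{\mathfrak z^*}[\sigma^{\mathfrak X}_{\mathscr V} < \infty] > 0$. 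Under \textbf{[O2]} (where $\mathscr V = \mathscr M$ and $\sigma^{\mathfrak X}_{\mathscr V} = +\infty$ a.s.\ from \eqref{eq.C--1}), it forces $\mathbf V(\mathfrak X_s) = \inf_{\mathscr M}\mathbf V$ a.s.\ for every $s > 0$, which contradicts \textbf{[Ti]} applied on a nonempty open subset of $\mathscr M$ on which $\mathbf V > \inf_{\mathscr M}\mathbf V + c$ (available because $\mathbf V$ is continuous and non-constant).

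The hardest step will be the reproof of \eqref{eq.k=0} in this abstract setting, namely the uniform tightness
$$\inf_{K \subset \mathscr M}\ \sup_{\mathfrak z \in \mathscr K_n} \mathbb P_{\mathfrak z}\bigl[\exists\, s \in [0,t],\ \mathfrak X_s \notin K\bigr] = 0.$$
In Theorem \ref{th.Ress} this rested on continuity of the paths of \eqref{eq.Lsur} together with the continuity estimate of Proposition \ref{pr.c2}. Here the sample paths are only c\`adl\`ag on an abstract polish space, with no a priori regularity beyond the strong Markov property, so the required replacement is precisely \textbf{[P$_{{\rm traj}}$]}: weak continuity of $\mathfrak z \mapsto \mathbb P[\mathfrak X_{[0,t]}(\mathfrak z) \in \cdot]$ yields relative compactness, and hence tightness, of the family $\{\mathbb P_{\mathfrak z}[\mathfrak X_{[0,t]} \in \cdot]\}_{\mathfrak z \in \mathscr K_n}$ in $\mathcal P(\mathcal D([0,t],\mathscr M))$; an $\varepsilon$-tight compact $A_\varepsilon$ in the Skorokhod space then projects to the compact $K_\varepsilon := \{\gamma_s : s \in [0,t],\ \gamma \in A_\varepsilon\} \subset \mathscr M$ exactly as in the original argument. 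This is the only place where \textbf{[P$_{{\rm traj}}$]} enters the proof; everywhere else the argument is purely functional-analytic.
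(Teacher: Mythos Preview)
Your proposal is correct and follows exactly the approach the paper itself indicates: the paper explicitly leaves the proof of Theorem~\ref{th.G} to the reader as a ``direct generalization of the proof of Theorem~\ref{th.oL}'', and your write-up carries out precisely that generalization, correctly identifying where each abstract hypothesis (\textbf{[SF$_1$]}, \textbf{[SF$_2$]}, \textbf{[Ti]}, \textbf{[L$_{{\rm yap}}$]}, \textbf{[P$_{{\rm traj}}$]}) replaces the model-specific ingredients, and in particular pinpointing \textbf{[P$_{{\rm traj}}$]} as the substitute for the tightness argument \eqref{eq.k=0}. One small imprecision: the bound you want for the H\"older step of Step~3a is not on $\mathbf W_\star^p=\mathbf W$ itself (which \textbf{[L$_{{\rm yap}}$]} does not give, since $\mathscr L\mathbf W/\mathbf W-\mathbf V$ need not be bounded) but on $\mathbf W_\star^r=\mathbf W^{r/p}$ for some $1<r<p$, obtained via the supermartingale argument the paper mentions with $\mathbf W^{1/q}$, $q=p/r>1$; this is presumably what you meant by ``a slightly larger exponent''.
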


Let us just give some  indications and remarks  on the proof of Theorem~\ref{th.oL}. 
On the one hand,  for $p>1$,  $\mathbf W^{1/p}\in \mathbb D_e(\mathscr L)$, and hence
  $$t\mapsto \mathbf M_t:= \frac{\mathbf W^{1/p}(\mathfrak X_t)}{\mathbf W^{1/p}(\mathfrak X_0)} \exp \big (-\int_0^t \frac{\mathscr L \mathbf W^{1/p}}{\mathbf W^{1/p}} (\mathfrak X_s) ds\big )$$
  is a local martingale, which   is a supermartingale by  Fatou’s lemma. This is a key argument in the proof of  Theorem \ref{th.G}. Note also that {\rm  \textbf{[L$_{{\rm yap}}$]}} implies that 
  $$ { \mathscr L  \mathbf W^{1/p}  }/{\mathbf W^{1/p} } - \mathbf V  \le \mathfrak m=\mathfrak b_0/p.$$ Hence,  using $(\mathbf M_t,t\ge 0)$, $ T_t^{\mathfrak X}\mathbf W^{1/p}(\mathfrak z)=  \mathbb E_\mathfrak z [\mathbf W^{1/p}(\mathfrak X_t)    \, e^{ -  \int_0^t   \mathbf V (\mathfrak X_s) ds}  ] \le e^{\mathfrak mt} \mathbf W^{1/p}(\mathfrak z)$, which proves \eqref{eq.Q1} in this general setting. Moreover, let us recall that  {\rm \textbf{[C$_{{\rm traj}}$]}} was indeed used in the third step of the proof of Theorem~\ref{th.Ress}.  
 
  %%%
  
In view of Theorem \ref{th.SF1}, one way to check {\rm \textbf{[SF$_1$]}} and {\rm \textbf{[SF$_2$]}}   is the following. 

%%%

  \begin{thm}\label{th.SFG}
  Assume that:
\begin{itemize}
\item[] {\rm \textbf{[SF$_0$]}} For every $t>0$ and  $f\in b\mathcal B(\mathscr M)$,  $\mathfrak z\in \mathscr M \mapsto U^{\mathfrak X}_t f(\mathfrak z)$ is continuous.     
\item[]     {\rm \textbf{[P$_{t=s}$]}} The mapping $\mathfrak z \in \mathscr M\mapsto \mathfrak X_t(\mathfrak z)$ is continuous in $\mathbb P$-probability.  
    \item[] {\rm \textbf{[D$_{\rm e}$]}} For all $t>0$ and $\mathfrak z\in \mathscr M$,  $\mathfrak X_t(\mathfrak  z)$ has a density w.r.t. the Lebesgue measure. 
 
\item[] {\rm \textbf{[$I\mathbf V_{\mathbf S}$]}} For all $t\ge 0$ and all $\mathfrak  z_n \to \mathfrak  z\in \mathscr M$, $ \int_0^t \mathbf V (\mathfrak X_s(\mathfrak  z_n))ds\to \int_0^t \mathbf V (\mathfrak X_s(\mathfrak  z))ds$ in $\mathbb P$-probability. 
 %%%
  \end{itemize}
%%%
  Then,  {\rm \textbf{[SF$_1$]}} is satisfied. Assume in addition that 
  \begin{itemize}
  \item [] {\rm \textbf{[B]}} For all compact   subset $K$ of $\mathscr V$,  $\lim_{s\to 0^+}\sup_{\mathfrak  z\in K}  \mathbb P_{\mathfrak z}[\sigma^\mathfrak X_{\mathscr V}\le s]=0$.
  \end{itemize}
Then {\rm \textbf{[SF$_2$]}}   holds.   
  \end{thm}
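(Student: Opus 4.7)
The proof will follow the same blueprint as the one of Theorem \ref{th.SF1}. To establish \textbf{[SF$_1$]}, I fix $t>0$, $f\in b\mathcal B(\mathscr M)$, and a sequence $\mathfrak z_n\to \mathfrak z$ in $\mathscr M$, and try to pass to the limit inside
\[
T^{\mathfrak X}_t f(\mathfrak z_n)=\mathbb E\Bigl[f(\mathfrak X_t(\mathfrak z_n))\, e^{-\int_0^t \mathbf V(\mathfrak X_s(\mathfrak z_n))ds}\Bigr].
\]
The exponential factor is bounded above by $e^{kt}$, where $-k$ is a lower bound of $\mathbf V$, and converges in $\mathbb P$-probability to its analogue at $\mathfrak z$ by \textbf{[$I\mathbf V_{\mathbf S}$]}. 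Hence, once it is shown that $f(\mathfrak X_t(\mathfrak z_n))\to f(\mathfrak X_t(\mathfrak z))$ in $\mathbb P$-probability for every $f\in b\mathcal B(\mathscr M)$, the conclusion will follow from bounded convergence.

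The convergence in probability of $f(\mathfrak X_t(\mathfrak z_n))$ for a merely bounded measurable $f$ is the delicate point; I plan to handle it exactly as in Step A.2 of the proof of Theorem \ref{th.SF1}. By \textbf{[D$_{\rm e}$]}, $\mathfrak X_t(\mathfrak z_n)$ has a Lebesgue density $p_t(\mathfrak z_n,\cdot)$; picking an everywhere positive smooth probability density $\psi$ on $\mathscr R^k$, setting $\rho(dy):=\psi(y)dy$ and $q_t(\mathfrak z,y):=p_t(\mathfrak z,y)/\psi(y)$, assumption \textbf{[SF$_0$]} rewrites as $\int \phi\, q_t(\mathfrak z_n,\cdot)\, d\rho \to \int \phi\, q_t(\mathfrak z,\cdot)\, d\rho$ for every $\phi\in L^\infty(\rho)$. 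This weak convergence in $\sigma(L^1(\rho),L^\infty(\rho))$ will yield the uniform integrability of $(q_t(\mathfrak z_n,\cdot))_n$ in $L^1(\rho)$ via \cite[Theorem 4.5.6]{bogachev2007measure}; combined with \textbf{[P$_{t=s}$]}, the desired convergence in probability of $f(\mathfrak X_t(\mathfrak z_n))$ will then be provided by \cite[Lemma 3.2]{wu1999}.

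For \textbf{[SF$_2$]}, I will imitate Step B of the same proof. Given $t>0$ and $f\in b\mathcal B(\mathscr V)$, extended by zero to $\mathscr M$, the Markov property yields, for $0<s<t$,
\[
Q^{\mathfrak X}_tf(\mathfrak z)=\mathbb E_{\mathfrak z}\Bigl[\mathbf 1_{s<\sigma^{\mathfrak X}_{\mathscr V}}\, e^{-\int_0^s \mathbf V(\mathfrak X_u)du}\, \varphi_s(\mathfrak X_s)\Bigr],
\]
where $\varphi_s(\mathfrak x):=\mathbb E_{\mathfrak x}[f(\mathfrak X_{t-s})e^{-\int_0^{t-s} \mathbf V(\mathfrak X_u)du}\mathbf 1_{t-s<\sigma^{\mathfrak X}_{\mathscr V}}]$ is bounded on $\mathscr M$. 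By the already-established \textbf{[SF$_1$]}, $T^{\mathfrak X}_s\varphi_s$ is continuous on $\mathscr M$, and a direct bound gives, for every compact $K\subset \mathscr V$,
\[
\sup_{\mathfrak z\in K}\bigl|T^{\mathfrak X}_s\varphi_s(\mathfrak z)-Q^{\mathfrak X}_tf(\mathfrak z)\bigr|\le \|f\|_\infty\, e^{kt}\, \sup_{\mathfrak z\in K}\mathbb P_{\mathfrak z}\bigl[\sigma^{\mathfrak X}_{\mathscr V}\le s\bigr],
\]
which tends to $0$ as $s\to 0^+$ by \textbf{[B]}. Thus $Q^{\mathfrak X}_tf$ will be exhibited as the locally uniform limit of continuous functions on $\mathscr V$, hence continuous.

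The main obstacle is the first part, namely promoting the convergence in probability of the positions $\mathfrak X_t(\mathfrak z_n)\to \mathfrak X_t(\mathfrak z)$ to that of $f(\mathfrak X_t(\mathfrak z_n))\to f(\mathfrak X_t(\mathfrak z))$ for an arbitrary bounded measurable $f$. This is precisely the step where the conjunction of \textbf{[D$_{\rm e}$]}, \textbf{[SF$_0$]} and the Dunford--Pettis type uniform integrability argument underlying \cite[Lemma 3.2]{wu1999} becomes essential. Once \textbf{[SF$_1$]} is secured, the passage to the killed semigroup is a routine Markov-property estimate, controlled by \textbf{[B]}.
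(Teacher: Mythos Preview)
Your proposal is correct and follows essentially the same route as the paper's proof, which simply refers back to Steps A.2, A.3 and B of Theorem \ref{th.SF1}; you have in fact spelled out those steps in the general setting more explicitly than the paper does. Your bound $\|f\|_\infty e^{kt}$ in the last display is even slightly sharper than the paper's $\|f\|_\infty e^{2kt}$, since you combine $e^{k(t-s)}$ and $e^{ks}$ directly rather than bounding each by $e^{kt}$.
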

 
 \begin{proof} 
  Using \textbf{[SF$_0$]} together with  \textbf{[P$_{t=s}$]} and  \textbf{[D$_{\rm e}$]}, 
    we deduce as in Step A.2 in  the proof of Theorem \ref{th.SF1} that for every $f\in b\mathcal B(\mathscr M)$, and for all  $\mathfrak z_n\to \mathfrak z\in \mathscr M$, it holds as $n\to +\infty$:
  $$   f(\mathfrak X_s(\mathfrak  z_n))\to f(\mathfrak X_s(\mathfrak z)) \text{ in $\mathbb P$-probability}.$$
 Using {\rm \textbf{[$I\mathbf V_{\mathbf S}$]}} and the fact that $\mathbf V$ is lower bounded, we deduce as in Step A.3 in  the proof of Theorem \ref{th.SF1} that for every $f\in b\mathcal B(\mathscr M)$, and for all  $\mathfrak z_n\to \mathfrak z\in \mathscr M$, one has:
 $$\mathbb E_{\mathfrak z_n} [f(\mathfrak X_t)   \, e^{  - \int_0^t  \mathbf V (\mathfrak X_s)ds}   ]\to \mathbb E_{\mathfrak z} [f(\mathfrak X_t)   \, e^{  - \int_0^t  \mathbf V (\mathfrak X_s)ds}  ],$$
 which proves \textbf{[SF$_1$]}.
 We then conclude the proof of {\rm \textbf{[SF$_2$]}} using {\rm \textbf{[B]}} and the same arguments as those used in Step B in  the proof of Theorem \ref{th.SF1}. 
 \end{proof}

 %%% 

 \subsubsection{Proof of Theorem \ref{th.L}}
 %%%%
 Let $(L_t,t\ge 0)$ be a Lévy process over $\mathscr R^d$  satisfying {\rm \textbf{[L1]}} and recall Lemma \ref{le.finitealpha-s}. Let $\mathscr O$ be a subdomain  of $\mathscr R^d_0$ (see \eqref{eq.RD0}). Assume that  $(L_t,t\ge 0)$  satisfies  {\rm \textbf{[L2]}}, and  assume also {\rm \textbf{[S2]}}. 
 To prove Theorem \ref{th.L} we apply Theorems \ref{th.G} and \ref{th.SFG} to the Lévy process $(L_t,t\ge 0)$ over $\mathscr R^d_0$. 
 Let us denote by $\mathscr L^{Le}$ the generator of the Lévy process $(L_s,s\ge 0)$, see~\cite[Theorem 6.8]{schilling2016introduction}.    
The  non-killed Feynman-Kac semigroup $(T^{Le}_t, t\ge 0)$ over the   space  $\mathscr R^d_0$ associated  with the potential $ \mathbf V_{\mathbf S}$  and the  process    $(L_s,s\ge 0)$      is defined by 
\begin{equation}\label{eq.FK-B1-NK}
T_t^{Le}f(x)=\mathbb E_x\Big[f(L_t)   \, e^{  - \int_0^t  \mathbf V_{\mathbf S} (L_s)ds}   \Big], \ \text{$t\ge 0$, $x\in \mathscr R^d_0$, and $f\in b\mathcal B(\mathscr R^d_0)$}.
 \end{equation}
 Let us first check Assumptions  \textbf{[SF$_1$]} and \textbf{[SF$_2$]} using  
 Theorem \ref{th.SFG} for the process $(L_t,t\ge 0)$ over $\mathscr M=\mathscr R^d_0$ and with $\mathbf V=\mathbf V_{\mathbf S}$. 
By assumption,  for all $x\in \mathscr R^d$, $L_s(x)$ has a density w.r.t. the Lebesgue measure $dy$ (and more precisely,  $\mathbb E_z[f(L_s)]=\int p_s(z-y)f(y)dy$, for every $z\in \mathscr R^d$ and $f\in b\mathcal B(\mathscr R^d$)). In particular it is well-known that this implies that the semigroup of $(L_s,s\ge 0)$ is strongly Feller (see e.g.~\cite[Lemma 4.9]{schilling2016introduction}). Moreover, for all $y,z\in \mathscr R^d$, 
  \begin{equation}\label{eq.coo}
 \sup_{s\ge 0}| L_s(y)-L_s(z)|=|y-z|.
 \end{equation}
Thus, for all $t\ge 0$ and all $(x_n)_n\subset  \mathscr R^d_0$ such that $x_n\to x\in \mathscr R^d_0$, we have that almost surely:
  $$ \int_0^t \mathbf V_{\mathbf S}(L_s(x_n))ds\to \int_0^t \mathbf V_{\mathbf S}(L_s(x))ds.$$
 This follows from a dominated convergence theorem together with   \eqref{eq.coo}. 
Indeed, recall that by  \eqref{eq.Ty},  $\inf_{s\in [0,t]}|L_s(x)|>0$ almost surely. Then, for $\epsilon\in (0, \inf_{s\in [0,t]}|L_s(x)|/2)$, there exists $n_\epsilon\ge 1$ such that   for all $n\ge n_\epsilon$ and all $s\in [0,t]$, $|L_s(x_n)|\ge |L_s(x)|-\epsilon\ge \inf_{s\in [0,t]}|L_s(x)|/2>0$. Hence there exists $c>0$ such that for all $n\ge n_\epsilon$ and all $s\in [0,t]$,  $|\mathbf V_{\mathbf S}(L_s(x_n))|\le c$. This proves \textbf{[$I\mathbf V_{\mathbf S}$]}. Moreover, \textbf{[SF$_1$]}. By Theorem \ref{th.SFG}, $T_t^{Le}$ is strongly Feller for $t>0$. On the other hand, 
 for all compact subset $K$ of $\mathscr R^d$ and $\delta>0$ it holds 
 $$
 \lim_{s\to 0^+}\sup_{x\in K}  \mathbb P_x[\sigma^{Le}_{B(x,\delta)}\le s]=0,$$
where $\sigma^{Le}_{B(x,\delta)}(x):=\inf\{t\ge 0, L_t(x)\notin {B(x,\delta)}\}$. 
Note indeed that   as $s\to 0^+$
$$\mathbb P_x[\sigma^{Le}_{B(x,\delta)}\le s]=\mathbb P[\sigma^{Le}_{B(0,\delta)}\le s]\to 0$$  since a.s.   $L_t\to L_0=0$ when $t\to 0^+$.  This proves \textbf{[B]} and then,     \textbf{[SF$_2$]} by  Theorem \ref{th.SFG}. 

We now check the other assumptions in Theorem \ref{th.G}. 
First the fact that \textbf{[Ti]} is satisfied (i.e.  for all $t>0$,   all $x\in \mathscr O$,  and all non-empty open subset $O$ of $\mathscr O$, $Q_t^{Le}(x,O)>0$)  follows from \textbf{[L2]}  together with the fact that  $ \int_0^t  \mathbf V_{\mathbf S}(L_s(x))ds$ is  a.s.  finite for all $x\neq 0$ (see Lemma \ref{le.finitealpha-s}).  In addition, the following Lyapunov condition is satisfied thanks to {\rm \textbf{[S2]}}. When  
  $x\to \partial \mathscr R^d_0\cup \{\infty\}$ ($x\in \mathscr R^d_0$), where we recall that $\partial \mathscr R^d_0=\{0\}$, 
 $$\frac{\mathscr L^{Le}   \mathbf 1(x) }{\mathbf 1(x) } -p\mathbf V_{\mathbf S}(x)=-p\mathbf V_{\mathbf S}(x)\to -\infty, \ \forall p>1.$$ 
Finally, the mapping $z\in \mathscr R^d_0\mapsto \mathbb P[L_{[0,t]}(z)\in \cdot ]\in \mathcal P(\mathcal D([0,t],\mathscr R^d_0))$
 is continuous for the weak topology (this is \textbf{[C$_{{\rm traj}}$]}). 
 This function is well-defined and continuous because we first have for all $z\in  \mathscr R^d_0$ and $t\ge 0$, $\mathbb P_x[ L_{[0,t]} \in \mathcal D([0,t],\mathscr R^d_0)]=1$ and we also have, thanks to \eqref{eq.coo}, that  when $x_n\to x\in \mathscr R^d_0 $ (see \eqref{eq.DD}):
 $$\mathbb P  [\sup_{s\in [0,t]}\mathsf d_{\mathscr R^d_0}( L_s( x_n),L_s( x))\ge \epsilon   ]\to 0.$$   
 It thus remains to check {\rm \textbf{[O1]}}  and {\rm \textbf{[O2]}}. First assume that $\mathscr R^d_0\setminus \overline{\mathscr O}$ is non-empty. Then, we assume {\rm \textbf{[L3]}}  which is precisely {\rm \textbf{[O1]}}. Moreover  when $\mathscr O=\mathscr R^d_0$, {\rm \textbf{[O2]}} is clearly satisfied since, by {\rm \textbf{[S2]}},  $\mathbf V_{\mathbf S}$ is non constant over $\mathscr R^d_0$. 
 
% %billingsley2013
%  

 %%%%

 %%%

  \subsection{Proofs of Lemma \ref{le.finiteLcin} and of Theorem \ref{th.kL}}
  \label{sec.th.3}
  
  In this section, we prove Lemma \ref{le.finiteLcin} and  Theorem \ref{th.kL}.
  
  %%%
  
 \subsubsection{Proof of Lemma \ref{le.finiteLcin}} 
 \label{sec.prp}
  Recall that because  $\mathbf V_{\mathbf c}$  is lower bounded over $\mathscr R^d$, thanks to Proposition \ref{pr.PreKin} we have a unique strong solution $(Y_t=(x_t,v_t),t\ge0)$ to \eqref{eq.Lcin} over $\mathscr R^d\times \mathscr R^d$ which satisfies the Girsanov formula \eqref{eq.Girsanov2}. 
   Let us prove \eqref{eq.nn} for all $\mathsf y\in \mathscr R^{2d}_0$ where we recall that $\mathscr R^{2d}_0=(\mathscr R^d\setminus\{0\})\times \mathscr R^d$, see \eqref{eq.r2d0}. Note that \eqref{eq.nn} is equivalent to the fact that    for all $T\ge 0$ and  $\mathsf y\in \mathscr R^{2d}_0$,  
 \begin{equation}\label{eq.nnT}
  \mathbb P_{\mathsf y}[\forall t\in [0,T],  |x_t|>0]=1.
 \end{equation}
   By the Girsanov formula (see Proposition \ref{pr.PreKin}), \eqref{eq.nnT} is equivalent to: for all $T\ge 0$ and  $\mathsf y\in \mathscr R^{2d}_0$,
 \begin{equation}\label{eq.nnT0}
 \mathbb P_{\mathsf y}[\forall t\in [0,T],  |x_t^0|>0]=1, 
 \end{equation}
  where  $(Y_t^0=(x_t^0,v_t^0),t\ge 0)$ is  the solution to the stochastic differential equation \eqref{eq.Lcin0}. Let us prove \eqref{eq.nnT0}. Introduce to this end 
  $$\tau^0_{\{0\}}:=\inf\{t\ge 0, x_t^0=0\}.$$
  Let us show that $ \mathbb P_{\mathsf y}[\tau^0_{\{0\}}\le T]=0$ (this is exactly \eqref{eq.nnT0}). 
  \medskip
  
  \noindent
 \textbf{Step 1.} Let $T>0$ (otherwise the result is obvious). First of all we claim that for all $\eta\in (0,T]$, 
 \begin{equation}\label{eq.eta-}
  \mathbb P_{\mathsf y}[\forall t\in  [\eta, T], |x_t^0|>0]=1.
   \end{equation}
To prove \eqref{eq.eta-}, we   use~\cite[Theorem 1]{sohl2010polar} with $I= [\eta, T]$ and $N=1$ there and for the Gaussian process  $x_t(\mathsf y)= x+vt+ \int_0^t B_udu$,  $\mathsf y=(x,v)\in \mathscr R^{2d}$. 
To this end we check that \textbf{Conditions 1} and \textbf 2 in~\cite{sohl2010polar} are satisfied for the   process $(x_t(\mathsf y),t\ge 0)$ on $I$. These assumptions are easy to check and  we check them for sake of completeness. On the one hand, we have for all $0\le s\le t$, 
$$|x_t(\mathsf y)-x_s(\mathsf y)|\le |v| |t-s| + |\int_s^t B_udu|\le |v| |t-s| + |t-s|^{1/2}  (\int_s^t |B_u|^2du)^{1/2} ,$$ so that for some $C>0$, 
$$|\mathbb E\big [|x_t(\mathsf y)-x_s(\mathsf y)|^2\big ]|^{1/2}\le C |t-s|.$$ This is  \textbf{Condition 1} in~\cite{sohl2010polar}. Adopting the terminology of~\cite[p. 845]{sohl2010polar}, this implies that  $H_1=1$ and the parameter $Q$ is equal to $1$.  Introduce now $e=(e_1,\ldots,e_d)\in \mathscr R^d$ such that $\sum_j e_j^2=1$. Write 
$x=(x_1,\ldots,x_d)$, $v=(v_1,\ldots,v_d)$, and $B_u=(B^1_u,\ldots,B^d_u)$ where the $B^j$'s are independent standard real Brownian motions. 
Then, one has for all $t\in I=[\eta,T]$:
\begin{align*}
\mathbb E\Big [\Big (\sum_{j=1}^d e_j (x_j+v_jt+ \int_0^t B^j_udu\Big )^2\Big ]&= \Big |\sum_{j=1}^d e_j (x_j+v_jt)\Big |^2+ \mathbb E\Big [\Big (\sum_{j=1}^d e_j \int_0^t B^j_udu\Big )^2\Big ]\\
&= | e\cdot (x+vt)|^2+ \sum_{j=1}^d  |e_j|^2  \mathbb E\Big [\big |\int_0^t B^j_udu\big |^2\Big ]\\
&=| e\cdot (x+vt)|^2+  t^3/3 \ge \eta^3/3.
\end{align*}
 This proves \textbf{Condition 2} in~\cite{sohl2010polar} when 
 $ I=[\eta,T]$. 
Since $Q=1<d$, by~\cite[Theorem 1]{sohl2010polar} and since  the  Hausdorff measure $ \dim _{d-1}(\{0\})$ of the set $\{0\}$ in   dimension $d-1$ (also called   the  $d-1$-Hausdorff measure   of the set $\{0\}$. See for instance~\cite[Definition 4.7]{morters} for a definition) is $0$, 
%Wiki: that if d is a positive integer, the d-dimensional Hausdorff measure of Rd is a rescaling of the usual d-dimensional Lebesgue measure
we   have that  for all $\eta\in (0,T]$,
 $$\mathbb P[\exists s\in [\eta,T], x_s(\mathsf y)\in \{0\}]=0.$$ 
 This achieves the proof of \eqref{eq.eta-}. Note that we have not used yet that the initial condition satisfies $x\neq 0$.  
 
   \medskip
  
  \noindent
 \textbf{Step 2.} We now conclude the proof of the fact that $ \mathbb P_{\mathsf y}[\tau^0_{\{0\}}\le T]=0$. Fix $\mathsf y\in \mathscr R^{2d}_0$ and  $T>0$. 
 Let $\epsilon >0$. Consider   $M_\epsilon>0$ large enough such that 
 $$\mathbb P\big [\sup_{u\in [0,T]}|B_u|>M_\epsilon\big ]\le \epsilon.$$  
   Then,   one has for all $\eta\in (0,T]$,  using \eqref{eq.eta-}, $\mathbb P_{\mathsf y} [\tau^0_{\{0\}}\in [\eta,T]]\le\mathbb P_{\mathsf y} [ \exists s\in [\eta,T], |x_s|=0]= 0$. Hence, for  all $\eta\in (0,T]$,
\begin{align*}
 \mathbb P_{\mathsf y}[\tau^0_{\{0\}}\le T]&\le  \mathbb P_{\mathsf y}\big [\tau^0_{\{0\}}\in (0,\eta), \sup_{u\in [0,T]}|B_u|\le M_\epsilon\big ]+\epsilon.
 \end{align*}
 Note that when $\tau^0_{\{0\}}\in (0,\eta)$ and $\sup_{u\in [0,T]}|B_u|\le M_\epsilon$, we have, since  $x_{\tau^0_{\{0\}}}(\mathsf y)=0$, 
\begin{align*}
0<|x|&\le |v| \tau^0_{\{0\}}+ \int_0^{\tau^0_{\{0\}}}|B_u|du \le  \eta (|v|  + M_\epsilon).
 \end{align*} 
Choose $\eta_\epsilon  \in (0,T]$ such that $\eta_\epsilon (|v|  + M_\epsilon)<|x|$. For such a $\eta_\epsilon>0$, one has  
$$ \mathbb P_{\mathsf y}\Big [\tau^0_{\{0\}}\in (0,\eta_\epsilon), \sup_{u\in [0,T]}|B_u|\le M_\epsilon\Big ]=0$$ and thus $\mathbb P_{\mathsf y}[\tau^0_{\{0\}}\le T]\le \epsilon$. This proves that $ \mathbb P_{\mathsf y}[\tau^0_{\{0\}}\le T]=0$. The proof of  \eqref{eq.nnT0}, and hence also the one of  \eqref{eq.nn},  are complete. The last statements in Lemma \ref{le.finiteLcin} are consequences  of \eqref{eq.nn}.

 %%%%

   \subsubsection{Proof of   Theorem \ref{th.kL}} 
   Recall that we work here on the space $\mathscr R^{2d}_0$, see \eqref{eq.r2d0}.  
    Assume  that  $-\nabla \mathbf V_{\mathbf c}$ (resp. $\mathbf V_{\mathbf S}$)  satisfies {\rm \textbf{[c1]}}   (resp. {\rm \textbf{[S1]}} or {\rm \textbf{[S2]}}). % 
 Let $(Y_t,t\ge 0)$ be the strong solution to \eqref{eq.Lcin} (see Proposition \ref{pr.PreKin}). Let us  denote by $ 
 \mathscr  L^{kL}=  \frac{1}{2}\Delta_v + v\cdot \nabla_x -\nabla \mathbf V_{\mathbf c}\cdot \nabla_v-\gamma v\cdot \nabla_v$ 
 the infinitesimal generator of the diffusion $(Y_t,t\ge 0)$. 
Since $\mathscr  L^{kL} \mathbf H\le c\mathbf H$ over $\mathscr R^{2d}$,  it holds for all $\mathsf y\in \mathscr H_R$ and $t\ge 0$, 
\begin{equation}\label{eq.energyLcin}
\mathbb P_{\mathsf y}[\sigma^{kL}_{\mathscr H_R}\le t]\le \frac{e^{ct}}{R} \mathbf H(\mathsf y), 
\end{equation}
where for $R>0$, $\mathscr H_R:= \{\mathsf y\in \mathscr R^{2d}, \mathbf H(\mathsf y)< R\}$ is an open bounded subset of $\mathscr R^d$ and $\sigma^{kL}_{\mathscr H_R}:=\inf\{t\ge 0, Y_t\notin \mathscr H_R\}$.

To prove Theorem \ref{th.kL}, and in view of Theorem \ref{th.G} and  Theorem \ref{th.SFG}, it is enough to check the  conditions \textbf{[SF$_0$]}, \textbf{[P$_{t=s}$]}, \textbf{[D$_{\rm e}$]}, \textbf{[$I\mathbf V_{\mathbf S}$]}, \textbf{[B]}, \textbf{[Ti]},\textbf{[L$_{{\rm yap}}$]}, and  \textbf{[C$_{{\rm traj}}$]} that we rewrite for the kinetic Langevin process \eqref{eq.Lcin}: 
 \begin{enumerate}
 \item[]\textbf{[SF$_0$]} The semigroup $(U_t^{kL},t\ge 0)$ of the process $(Y_t,t\ge 0)$ is strongly Feller. 
   \item[] \textbf{[P$_{t=s}$]}  For all $t\ge 0$, the mapping $\mathsf y\in \mathscr R^{2d} \mapsto Y_t(\mathsf y)$ is continuous in probability. 
    \item[]\textbf{[D$_{\rm e}$]} For all $t> 0$ and $\mathsf y\in \mathscr R^{2d}$, $ Y_t(\mathsf y)$ has density w.r.t. the Lebesgue measure $d\mathsf y$ over $\mathscr R^{2d}$.

     \item[] \textbf{[$I\mathbf V_{\mathbf S}$]} As $\mathsf y_n \to \mathsf y \in \mathscr R^{2d}_0$,  for all $t\ge 0$,  $\int_0^t \mathbf V_{\mathbf S}(x_s(\mathsf y_n))ds\overset{\mathbb P}{\to} \int_0^t \mathbf V_{\mathbf S}(x_s(\mathsf y))ds$.

     \item[] \textbf{[B]}  For all $\delta>0$ and any compact subset $K$ of $\mathscr R^{2d}$, 
     $$\lim_{s\to 0^+}\sup_{\mathsf y=(x,v)\in K}  \mathbb P_{\mathsf y}[\sigma^{kL}_{B(x,\delta)}\le s]=0,$$ 
 where $\sigma^{kL}_{B(x,\delta)} (\mathsf y) :=\inf\{t\ge 0, x_t(\mathsf y)\notin {B(x,\delta)}\}$.  
     \item[] \textbf{[Ti]} The semigroup $(Q_t^{kL},t\ge 0)$ is topologically irreducible over $\mathscr D=\mathscr O\times \mathscr R^d$.  
 
      \item[]  \textbf{[L$_{{\rm yap}}$]}  There exists a  $\mathcal C^{1,2}$ function $\mathbf W:\mathscr R^{2d}\to [1,+\infty)$ (which thus belongs to the  extended domain of the generator of $(U_t^{kL},t\ge 0)$)    such that  for all $\mathsf y=(x,v)\in  \mathscr R^{2d}_0$ satisfying   either  $|\mathsf y|\to +\infty$ or $\mathsf y\to  \partial \mathscr R^{2d}_0=\{0\}\times \mathscr R^d$ (i.e. when $x\to 0$), 
      $$ { \mathscr L^{kL}  \mathbf W(\mathsf y) }/{\mathbf W(\mathsf y) } -p\mathbf V_{\mathbf S}(x) \to -\infty, \ \forall p>1.$$ 
      \item[] \textbf{[C$_{{\rm traj}}$]} The mapping  $\mathsf y \in \mathscr R^{2d}_0\mapsto \mathbb P[Y_{[0,t]}(\mathsf y)\in \cdot ]\in \mathcal P(\mathcal C([0,t],\mathscr R^{2d}_0))$ is continuous for the weak topology. 
       \end{enumerate}
       
      We check these conditions. First of all, using the same arguments as those  used in the proof of Theorem \ref{th.SF1} (see more precisely Step A.1 there), one proves  \textbf{[SF$_0$]} for the kinetic Langevin process \eqref{eq.Lcin}. Note that an alternative proof consists in using the Girsanov formula \eqref{eq.Girsanov2} as we did   for the overdamped Langevin process, see \eqref{eq.mm} and the lines below. 
%in   \cite{Wu2001} (see also \cite{guillinqsd3}). 

       Condition \textbf{[C$_{{\rm traj}}$]}  is a consequence of the fact that 
      for all $\mathsf y\in \mathscr R^{2d}_0$, we have $\mathbb P_{\mathsf y}[Y_{[0,t]} \in \mathcal C([0,t], \mathscr R^{2d}_0)]=1$ (see Lemma~\ref{le.finiteLcin}), together with the fact that for all   $t\ge 0$ and  all sequence $(\mathsf y_n)_n$ in $\mathscr R^{2d}$ such that $\mathsf y_n\to \mathsf y\in \mathscr R^{2d}$ as $n\to +\infty$,  for all $\epsilon>0$,  
 \begin{equation}\label{eq.pr=}
 \mathbb P  [\sup_{s\in [0,t]}| Y_s( \mathsf y_n)-Y_s( \mathsf y)|\ge \epsilon   ]\to 0 \ \text{ as $n\to +\infty$}.
 \end{equation} 
Equation \eqref{eq.pr=} is a consequence of \eqref{eq.energyLcin} together with the fact that the coefficients of the stochastic differential equation \eqref{eq.Lcin} are locally Lipschitz (this is proved e.g. by adapting the proof of  \cite[Proposition 2.2]{guillinqsd3}). Note that \eqref{eq.pr=}  implies \textbf{[P$_{t=s}$]}. Condition \textbf{[D$_{\rm e}$]} is a consequence of the Girsanov formula stated in Proposition \ref{pr.PreKin} (see \eqref{eq.Girsanov2}). Condition \textbf{[$I\mathbf V_{\mathbf S}$]} is proved as in   Step A.3 in  the proof of Theorem \ref{th.SF1} using that $\mathbb P_{\mathsf y}[Y_{[0,T]}\in \mathcal C([0,T], \mathscr R^{2d}_0)]=1$,  \eqref{eq.pr=}, and  the continuous mapping theorem. 
%Let $t\ge 0$.  
%Define the  mapping 
%  $$G: \gamma=(x,v)\in \mathcal C([0,t],\mathscr R^{2d})\mapsto \int_0^t \mathbf V_{\mathbf S}(x_s)ds.$$  Using the dominated convergence theorem, the function $G$ is continuous at any $\gamma\in \mathcal C([0,t],\mathscr R^{2d}_0)$.
%  Let $(\mathsf y_n)\subset  \mathscr R^{2d}_0$   such that $\mathsf y_n\to \mathsf y\in \mathscr R^{2d}_0$. Set $Y_n:=Y_{[0,t]}(\mathsf y_n)$ and $Y:=Y_{[0,t]}(\mathsf y )$.   Then, since $\mathbb P_{\mathsf y}[Y_{[0,T]}\in \mathcal C([0,T], \mathscr R^{2d}_0)]=1$, using \eqref{eq.pr=} and  the continuous mapping theorem that 
%  $G(Y_n)\to G(Y)$ in $\mathbb P$-probability, which proves \textbf{[$I\mathbf V_{\mathbf S}$]}.  
Since the coefficients of \eqref{eq.Lcin} are locally Lipschitz, the condition   \textbf{[B]}    is proved as in Lemma 2.4 in \cite{guillinqsd3},  see also its note there for other ways to prove it.  Let us mention that one can also prove \textbf{[B]}   with the same  arguments we used  to show \eqref{eq.contu} above and which we recall  are based on Itô calculus with a suitable Lyapunov function. 
  Condition \textbf{[Ti]} is a consequence of the fact that the killed semigroup of the  process $(Y_s,s\in [0,\sigma^{kL}_{\mathscr D}))$ is topologically irreducible over $\mathscr D$ (to see this, use the Girsanov formula and the fact that the killed semigroup  $(Y^0_s,s\in [0,\sigma^{Y^0}_{\mathscr D}))$  is  topologically irreducible over $\mathscr D$, see e.g.~\cite{guillinqsd}) together with the fact that    $\int_0^t \mathbf V_{\mathbf S}(x_s(\mathsf y))ds<+\infty$ a.s. for all $\mathsf y\in \mathscr R^{2d}_0$ (see Lemma~\ref{le.finiteLcin}). 
 Let us now check condition  \textbf{[L$_{{\rm yap}}$]}.  
  We choose the Lyapunov function constructed in~\cite{Wu2001}. Define  $
\mathsf F (x,v)=a  \, \mathbf H  (x,v)+b\, v\cdot     \mathsf G(x)$ ($a,b>0$), 
where $\mathsf G$ is the $\mathcal C^1$ function
$$
 x\in \mathscr R^d\mapsto \mathsf G(x)=\frac{x}{|x|}(1-\chi(x)), 
$$
where $\chi$ is defined just before \eqref{eq.W}.
 Note that both $\mathsf G$ and its gradient are bounded over $\mathscr R^d$. In particular $\mathsf F$ is lower bounded. 
For all $\mathsf y=(x,v)\in \mathscr R^{2d}$, we then define as in~\cite{Wu2001}:
 \begin{equation}\label{eq.Wkl}
\mathbf W  (x,v)=\exp\big[{\mathsf F(x,v) -\inf_{\mathscr R^{2d}} \mathsf F  }\big].
\end{equation} 
We then have  for all $\mathsf y\in \mathscr R^{2d}$,
\begin{align*}
&-{ \mathscr L^{kL}  \mathbf W(\mathsf y) }/{\mathbf W(\mathsf y) } \\
&\quad =- \mathscr L^{kL} \mathsf F(\mathsf y) - \frac 12 |\nabla _v \mathsf F(\mathsf y)|^2 \\
&\quad =-\frac{a d}2+ a\,  \gamma |v|^2- b \,  v\cdot  \nabla \mathsf G(x) v +b   \, \gamma \,  v \cdot \mathsf G(x)+ b\,  \mathsf G(x)\cdot \nabla  \mathbf V_{\mathbf c}(x)-\frac 12\,  |av + b\mathsf G(x)|^2.
\end{align*}
Hence, for some constants $c_1,c_2>0$,  such that $-{ \mathscr L^{kL}  \mathbf W }/{\mathbf W } $ is lower bounded by 
$$    |v|^2\Big[a\big[\gamma- \frac a2\big] - b |\nabla \mathsf G|_\infty\Big]-c_1   b (a+1) \, |v| + b\,  \mathsf G(x)\cdot \nabla  \mathbf V_{\mathbf c}(x)-c_2 .  
$$
 Choose $a>0$ such that $a<2\gamma$ and then $b>0$ such that  $ a\big[\gamma- \frac a2\big] - b>0$. Then, when  
  $|\mathsf y|\to   +\infty$,   $- { \mathscr L^{kL}  \mathbf W(\mathsf y) }/{\mathbf W(\mathsf y) } \to +\infty$. Hence, for $\mathsf y=(x,v)\in \mathscr R^{2d}_0$, when $|\mathsf y|\to +\infty$ or $\mathsf y\to  \partial \mathscr R^{2d}_0=\{0\}\times \mathscr R^d$ (i.e. when $|x|\to 0$), $ { \mathscr L^{kL}  \mathbf W(\mathsf y) }/{\mathbf W(\mathsf y) } - p\mathbf V_{\mathbf S}(x) \to -\infty$, for all $p>1$. \\

 Theorem \ref{th.kL} is  a consequence of  Theorems~\ref{th.G} and~\ref{th.SFG}.  Note that {\rm \textbf{[O1]}} is a consequence of the fact that when   $\mathscr R^d_0\setminus \overline{\mathscr O}$ is non-empty, there exist  a non empty open ball $B\subset \mathscr R^d_0\setminus \overline{\mathscr O}$,    $\mathsf y_0\in \mathscr D= \mathscr O\times \mathscr R^d$, and   $t_0>0$ (actually this is true for all $\mathsf y_0\in \mathscr D$ and $t_0>0$) such that 
$$\mathbb P_{\mathsf y_0}[Y_{t_0}\in B \times \mathscr R^d]>0.$$
Indeed, this implies that   $0<\mathbb P_{\mathsf y_0}[Y_{t_0}\in B\times \mathscr R^d]\le \mathbb P_{\mathsf y_0}[ \sigma^{kL}_{\mathscr D}\le t_0]$.

%%%
\begin{remarks}\label{re.th3a} As already stated in the note \ref{no.kL},  the assertions of Theorem \ref{th.kL} are still valid in the case when   $ \mathbf V_{\mathbf c}$ is only  $\mathcal C^1$. To show this, one applies again Theorem \ref{th.G} over $\mathscr R^{2d}_0$.  We give    the proof of this claim in broad terms, the details are left to the reader since they rely on similar arguments as those used so far in this work.   First, for all $\mathsf y=(x,v)\in \mathscr R^{2d}$, there is a unique weak solution  to \eqref{eq.Lcin} starting at $\mathsf y$,  and  the Girsanov formula \eqref{eq.Girsanov2}  still holds true, see~\cite[Lemma 1.1]{Wu2001}. Note that \eqref{eq.energyLcin} and \eqref{eq.nn} still hold. In addition, with the same arguments as those used in \cite[Proposition 2.9]{guillinqsd2}, one shows {\rm \textbf{[C$_{{\rm traj}}$]}} which actually holds over $\mathscr R^{2d}$. Using the Girsanov formula and auxiliary results on the process $(Y_t^0,t\ge 0)$ shown in the proof of~\cite[Proposition 1.2]{Wu2001}, one proves {\rm \textbf{[SF$_0$]}} (which actually holds over $\mathscr R^{2d}$)   and {\rm \textbf{[SF$_1$]}} over   $\mathscr D$, a subdomain of  $\mathscr R^{2d}_0$.   The proof of {\rm \textbf{[B]}} is made adapting the arguments used in the proof of~\cite[Lemma 1]{guillin2024large} or those given in  the note of Lemmata 2.4 and 2.5 in \cite{guillinqsd3}. 
\end{remarks}

%%%%

 \subsection{Proof of Theorem \ref{th.Bn}}
 \label{sec.4} 
Recall the definition of the state space $\mathscr E$ and the Lévy process $(\Theta_t,t\ge 0)$ over $\mathscr E$, see \eqref{eq.eD} and \eqref{eq.theta}.
Recall also \eqref{eq.HS2} and  Lemma \ref{le.Scc}. To prove Theorem \ref{th.Bn} we use Theorems~\ref{th.G} and~\ref{th.SFG}. 
Recall that $\mathbf U_{\mathbf S}(\mathsf x)\to +\infty$ if and only if $|\mathsf x|\to +\infty$ or $\mathsf x\to \partial \mathscr E$ ($\mathsf x\in \mathscr E$).
By Assumption \textbf{[L1]},  for any $t>0$,  $\Theta_t$ has density w.r.t. the Lebesgue measure over $\mathscr R^{dn}$  (this is \textbf{[D$_e$]}) and its semigroup is thus strong Feller (this is \textbf{[SF$_0$]}). Let $t\ge 0$. Moreover, as $\mathsf x_n\to \mathsf x\in \mathscr E$, it holds in probability (and actually a.s.)
$$\int_0^t \mathbf U_{\mathbf S}(\Theta_s(\mathsf x_n))ds\to \int_0^t \mathbf U_{\mathbf S}(\Theta_s(\mathsf x))ds \ \text{ (this is \textbf{[$I\mathbf V_{\mathbf S}$]})}.$$
Indeed, on the one hand, it holds a.s. $ \sup_{s\ge 0} |\Theta_s(\mathsf x_n)-\Theta_s(\mathsf x)|= |\mathsf x_n-\mathsf x|$. On the other hand, recall (see the proof of Lemma \ref{le.Scc}),   that there exist a.s. $\epsilon>0$ and $M>0$ such that $ |\Theta_s(\mathsf x)|\le M$  and ${\rm dist}\, (\Theta_s(\mathsf x), \partial \mathscr E )\ge \epsilon$, for all $s\in [0,t]$. Hence, there exists a.s. $n_0\ge 1$, for all $n\ge n_0$ and $s\ge 0$, $|\Theta_s(\mathsf x_n)|\le 2M$ and ${\rm dist}\, (\Theta_s(\mathsf x_n), \partial \mathscr E )\ge \epsilon/2$. The result follows from   the dominated convergence theorem.  Note also that the previous analysis implies  \textbf{[C$_{{\rm traj}}$]}. 
Consequently, using Theorem \ref{th.SFG}, the non-killed semigroup is strongly Feller, i.e. for all $t>0$ and $f\in b\mathcal B(\mathscr E)$, the function 
$$\mathsf x\in \mathscr E \mapsto T^B_tf(\mathsf x)=\mathbb E_{\mathsf x}\Big[f(\Theta_t)   \, e^{  - \int_0^t  \mathbf U_{\mathbf S}(\Theta_s)ds}    \Big] \text{ is  continuous (this is \textbf{[SF$_1$]})}.$$
On the other hand, for all compact subset $K$ of $\mathscr E$ and all $\delta>0$, we have that (recalling that $B(\mathsf x,\delta)$  is the open ball in $\mathscr R^{dn}$ of radius $\delta>0$ and centered at $\mathsf x$):
$$  \lim_{s\to 0^+}\sup_{\mathsf x\in K}  \mathbb P_\mathsf x[\sigma^\Theta_{B(\mathsf x,\delta)}\le s]= \lim_{s\to 0^+}   \mathbb P_0[\sigma^\Theta_{B(0,\delta)}\le s]=0,$$ which implies {\rm \textbf{[B]}}. We thus deduce \textbf{[SF$_2$]} using  Theorem \ref{th.SFG}.   Furthermore \textbf{[L$_{{\rm yap}}$]} is satisfied with the constant function  $\mathbf 1$. Condition \textbf{[Ti]} is a consequence of \textbf{[L4]} together with the fact that for all $\mathsf x\in \mathscr E$, $\int_0^t \mathbf U_{\mathbf S}(\Theta_s(\mathsf x))ds$ is finite almost surely (see Lemma \ref{le.Scc}). Finally, note that {\rm \textbf{[O1]}} is precisely \textbf{[L5]} and that $\mathbf U_{\mathbf S}$ is non constant over $\mathscr E$. The proof of Theorem \ref{th.Bn} is complete using Theorem \ref{th.G}.

%%%%

 \medskip
 
  \medskip
 
\begin{center}
\textbf{Appendix: on Assumptions \textbf{[L1]}$\to$\textbf{[L5]}}
\end{center}
\medskip

\noindent
In this section we give examples of Lévy processes satisfying \textbf{[L1]}, \textbf{[L2]}, and    \textbf{[L3]} (see the second model). We also give examples of such processes such that the process $\Theta$ (see \eqref{eq.theta}) satisfies  \textbf{[L4]} and \textbf{[L5]} (see the fourth model).  
\medskip

\noindent
\textbf{On Assumptions \textbf{[L1]}}.
In the mathematical  literature, several  conditions exist  ensuring \textbf{[L1]} for a Lévy process, see e.g.~\cite{hartman1942,tucker1962absolute,sato,knopova2013note,kuhn2019transition} and references therein. For instance, the following simple well-known conditions imply that the Lévy process has a density w.r.t. the Lebesgue measure for all $t>0$:
\begin{enumerate}
\item[\textbf -] Its Gaussian covariance matrix has full rank, or if its Lévy measure $\nu$ is absolutely continuous w.r.t. the Lebesgue measure  and $\nu(\mathscr R^d\setminus \{0\})=+\infty$, see e.g.~\cite[Theorem 27.7]{sato}.  
\item[\textbf -] An isotropic Lévy process in $\mathscr R^d$ ($d\ge 2$) which is  not a compound Poisson process, see~\cite[Eq. (4.6)]{zabczyk1970theorie}. 
\item[\textbf -] A Lévy process without Gaussian component  such that
 $
 \lim_{|\xi|\to +\infty}\frac{\Re (\Psi(\xi))}{\ln(1+|\xi|)}=+\infty$, 
 where  $\Psi$ is the   characteristic exponent and $\Re(z)$ the real part of a complex number $z$, see~\cite{hartman1942,knopova2013note}. 
 \item[\textbf -] A  subordinate Brownian motion $(B_{\ell_t},t\ge 0)$ where     $( \ell_t,t\ge 0)$ is a subordinator 
with infinite lifetime (independent from $(B_t,t\ge 0)$), see~\cite[Lemma 3.1]{kusuoka2014smoothing}. 
\end{enumerate}

%In particular it is well known that if  the Brownian coefficient is positive, or when the mass of the absolutely continuous part of the Lévy measure is infinite, the Lévy process has a density w.r.t. the Lebesgue measure. 
 
 %In the literature, several  conditions exist  ensuring \textbf{[L2]} (resp.  \textbf{[L3]}) for a Lévy process, see e.g.~\cite{tucker1962absolute,knopova2013note,kaleta2015pointwise} (resp.~\cite{simon1999petites,simon2000support,kulyk22})  and references therein for explicit conditions. 

 \medskip
 
 \noindent
\textbf{On Assumptions \textbf{[L2]} and \textbf{[L3]}}.
The following proposition is a way to check \textbf{[L2]} and \textbf{[L3]} for a Lévy process. 
 
 \begin{prop}\label{pr.Topo}
 Let $(L_t,t\ge 0)$ be a purely jump Lévy process over $\mathscr R^d$ such that its  Lévy measure $\nu$ has full topological support over $\mathscr R_0^d$, i.e. for all $y\in \mathscr R^d_0$ and all $r\in (0,|y|)$, $\nu(B(y,r))>0$, and such that for some $\beta >0$, 
 $$\int_{|u|\le 1}|u|^\beta \nu(du)<+\infty.$$  Let $\mathscr O$ be a subdomain of $\mathscr R^d$. 
 Let  $T>0$ and $x,z\in \mathscr O$. Then,  for all $\epsilon>0$, 
 \begin{equation}\label{eq.Topo}
 \mathbb P_x[|L_T-z|<\epsilon, T<\sigma _{\mathscr O}]>0,
 \end{equation}
 where $\sigma_{\mathscr O}:=\inf\{t\ge 0, L_t\notin \mathscr O\}$. 
 In addition, if $\mathscr R^d\setminus \overline{\mathscr O}$ is nonempty, for all $x\in \mathscr O$, $\mathbb P_x[\sigma_{\mathscr O}<+\infty ]>0$.  
  \end{prop}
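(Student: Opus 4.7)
The plan is to run a classical support-theorem argument for Lévy processes: cut $L$ into independent small- and large-jump pieces, force the large-jump piece to trace a polygonal line from $x$ to $z$ that lies inside $\mathscr O$, and show that the small-jump piece stays uniformly small on $[0,T]$. Concretely, since $\mathscr O$ is open and path-connected, first choose a polygonal line $x=x_0,x_1,\ldots,x_n=z$ in $\mathscr O$ together with a radius $r_0\in(0,\epsilon/2)$ such that the closed $r_0$-tubular neighbourhood of this line is contained in $\mathscr O$. Then fix $\delta\in(0,r_0/4)$ with $\delta<\tfrac12\min_k|x_{k+1}-x_k|$ and decompose
\begin{equation*}
L_t=L^{(\le\delta)}_t+J^{(\delta)}_t,
\end{equation*}
where $J^{(\delta)}$ is the compound Poisson process carrying the jumps of size $>\delta$, with finite intensity $\lambda_\delta=\nu(\{|u|>\delta\})$ and jump law $\nu|_{\{|u|>\delta\}}/\lambda_\delta$, while $L^{(\le\delta)}$ collects the small jumps together with the drift (after absorbing the compensator $\int_{\delta<|u|\le 1}u\,\nu(du)$ into the compensated martingale when $\int_{|u|\le 1}|u|\,\nu(du)=\infty$). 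The two pieces are independent by standard properties of the Poisson random measure.

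The control of $L^{(\le\delta)}$ is the heart of the matter. Writing $\alpha:=\beta\wedge 2$ and using the hypothesis $\int_{|u|\le 1}|u|^\beta\,\nu(du)<+\infty$, a Doob (if $\alpha=2$) or Burkholder--Davis--Gundy (if $\alpha\in(0,2)$) estimate applied to the compensated small-jump martingale gives
\begin{equation*}
\mathbb E\Big[\sup_{t\in[0,T]}|L^{(\le\delta)}_t|^{\alpha}\Big]\le C_\alpha\, T\int_{|u|\le \delta}|u|^{\alpha}\,\nu(du)\xrightarrow[\delta\to 0^+]{}0,
\end{equation*}
so that for $\delta$ small enough one has $\mathbb P\bigl[\sup_{t\in[0,T]}|L^{(\le\delta)}_t|\le r_0/4\bigr]\ge 1/2$.

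The control of $J^{(\delta)}$ is comparatively elementary. Conditionally on $\{J^{(\delta)}\text{ has exactly }n\text{ jumps in }[0,T]\}$, an event of probability $e^{-\lambda_\delta T}(\lambda_\delta T)^n/n!>0$, the jump times are distributed as the order statistics of $n$ i.i.d. uniforms on $[0,T]$ and the jump sizes are i.i.d. with law $\nu|_{\{|u|>\delta\}}/\lambda_\delta$, which by the full-support hypothesis on $\nu$ charges every non-empty open subset of $\{|u|>\delta\}$. Forcing the $k$-th jump time to lie in a small subinterval $I_k\subset\bigl((k-1)T/n,kT/n\bigr)$ and the $k$-th jump size to lie in $B(x_{k+1}-x_k,r_0/(8n))$ yields a positive-probability event on which the piecewise-constant trajectory $x+J^{(\delta)}_t$ stays within $r_0/8$ of the polygonal line on the whole of $[0,T]$ and ends within $r_0/8$ of $z$.

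Combining both events by independence and the triangle inequality, $x+L_t$ lies in the $r_0$-tube (hence in $\mathscr O$) throughout $[0,T]$ and satisfies $|x+L_T-z|<r_0/4+r_0/8<\epsilon$, which is precisely \eqref{eq.Topo}. For the second statement, pick any $z'\in\mathscr R^d\setminus\overline{\mathscr O}$ and $r\in(0,\mathrm{dist}(z',\overline{\mathscr O}))$, and apply the large-jump construction with a single big jump approximating $z'-x$ (no tube constraint now needed); this yields $\mathbb P_x[L_T\in B(z',r)]>0$, so that $\{L_T\notin\overline{\mathscr O}\}\subseteq\{\sigma_{\mathscr O}\le T\}$ has positive probability. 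The main obstacle is the uniform control of $L^{(\le\delta)}$ on the whole interval $[0,T]$ in the infinite-variation case: one has to absorb the diverging compensator $\int_{\delta<|u|\le 1}u\,\nu(du)$ into the martingale and rely on the $\alpha$-moment bound above, which is precisely where the integrability hypothesis $\int_{|u|\le 1}|u|^\beta\,\nu(du)<+\infty$ is crucially used.
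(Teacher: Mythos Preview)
Your decomposition strategy is the classical, hands-on route and differs from the paper's proof, which invokes a black-box support theorem of Kulik for L\'evy-driven SDEs and simply constructs a suitable control curve in $\mathscr O$. Your approach is more self-contained, but as written it has a genuine gap in the treatment of the drift.

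The issue is the claimed bound
\[
\mathbb E\Big[\sup_{t\in[0,T]}|L^{(\le\delta)}_t|^{\alpha}\Big]\le C_\alpha\, T\int_{|u|\le \delta}|u|^{\alpha}\,\nu(du)\xrightarrow[\delta\to 0^+]{}0.
\]
If $J^{(\delta)}$ is the pure compound Poisson process of jumps of size $>\delta$, then necessarily
\[
L^{(\le\delta)}_t \;=\; \Big(b-\int_{\delta<|u|\le 1}u\,\nu(du)\Big)t \;+\; \int_0^t\!\!\int_{|u|\le\delta}u\,\tilde N(ds,du),
\]
so $L^{(\le\delta)}$ is a martingale \emph{plus a linear drift} $d_\delta t$. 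Your Doob/BDG bound controls only the martingale part; the drift $d_\delta$ need not vanish and in fact diverges as $\delta\to 0$ whenever $\int_{|u|\le 1}|u|\,\nu(du)=\infty$ and $\nu$ is not symmetric. Your parenthetical about ``absorbing the compensator'' does not resolve this: either the drift stays in $L^{(\le\delta)}$ (and the moment bound fails) or it moves to $J^{(\delta)}$ (and then $J^{(\delta)}$ is no longer piecewise constant, so your ``piecewise-constant trajectory $x+J^{(\delta)}_t$'' argument no longer applies as stated). Incidentally, since every L\'evy measure satisfies $\int_{|u|\le 1}|u|^2\nu(du)<\infty$, one may always take $\alpha=2$ and use Doob; the BDG route and the $\beta$-moment assumption are not what is doing the work here.

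The fix is to keep the drift with the skeleton: put $d_\delta t$ together with $J^{(\delta)}$, so that between jumps the controlled path moves along a line of slope $d_\delta$ rather than staying constant. For \emph{fixed} $\delta$ (so $d_\delta$ is a fixed vector) first choose $\delta$ small enough that $\mathbb P[\sup_{[0,T]}|M^{(\delta)}|<r_0/4]\ge 1/2$ via Doob, then choose $n$ large enough that $|d_\delta|T/n$ is much smaller than $r_0$, and finally design the $n$ jumps to land near successive vertices of a refined polygon, correcting at each step for the linear displacement $d_\delta(t_k-t_{k-1})$. This produces a piecewise-affine (not piecewise-constant) trajectory that stays in the $r_0$-tube with positive probability, and the rest of your argument goes through. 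The paper's approach sidesteps all of this bookkeeping because the control curves in $\mathbf S^{\text{const}}_{0,T,x}$ already incorporate the effective drift $\tilde b$.
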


\begin{proof}  
We will use~\cite[Theorem 2.1]{kulyk22} and for this reason, we adopt the notation of~\cite[Sections 2.1 and  2.2]{kulyk22}.
Note that Assumptions $\mathbf H_1$ and $\mathbf H_2$ there are satisfied here for the process  $(L_t,t\ge 0)$ with $r\equiv 0$, $b\equiv 0$, $\sigma\equiv 1$, and $c(x,u)\equiv u$.  Fix $\epsilon, T>0$ and $x,z\in \mathscr O $.   In view of~\cite[Theorem 2.1]{kulyk22}, we aim at constructing  $\phi\in  \mathbf S^{\text{const}}_{0,T,x}$ such that  $\overline{\rm Ran}\, \phi \subset \mathscr O$ with $\phi_0=x$, $|\phi_T-z|<\epsilon/2$. By~\cite[Theorem 2.1]{kulyk22}, one then has in particular that: 
 \begin{equation}\label{eq.Kulyk}
 \mathbb P_x[ d_T(L,\phi)<\epsilon']>0, \ \forall \epsilon'>0,
 \end{equation}
  where $ d_T$ is the Skorokhod metric  of  $ \mathcal D([0,T], \mathscr  R^d)$, see e.g.~\cite[Section 12]{billingsley2013}. 
   First note that for any $r_0>0$ and $a\neq b\in \mathscr R^d$, $J(a,B(b,r_0))= \nu(B(b-a,r_0))\in (0,+\infty)$ if $r_0<|b-a|$. Therefore $b\neq a\Rightarrow b\in \text{supp}(J(a,\cdot))$ (i.e. the jump from $a$ to $b$ is admissible). 
In our setting, we have  $\tilde b=- \mathbf b$  is a constant 
 where $\mathbf b=\int_{|u|\le 1}u_L \nu(du)$ and where $u_L$ is the orthogonal projection of $u$ on the \textit{integrability subvector}  space $$L=\big \{\ell \in \mathscr R^d, \int_{|u|\le 1}|u\cdot \ell| \nu(du)<+\infty\big \}.$$ 
It is then  not difficult\footnote{There are many ways to do it, choosing e.g. $f_t\equiv 0$ in~\cite[Eq. (7)]{kulyk22}.} to construct a   curve  $\phi\in  \mathbf S^{\text{const}}_{0,T,x}$ such that  $\overline{\rm Ran}\, \phi \subset \mathscr O$,  $\phi_0=x$, and $|\phi_T-z|<\epsilon/2$ ($\phi$ is usually called a \textit{control} curve).    
 Assume now that $\epsilon'>0$ is small enough (say $\epsilon'\in (0,\epsilon_\phi)$, $\epsilon_\phi\in (0,\epsilon)$) such that $ d_T(f,\phi)<\epsilon'/2$ implies 
 that $\overline{\rm Ran} \, f\subset \mathscr O$\footnote{Indeed,  use  the distance $d(A,B)=\sup_{a\in A} d(a,B)=d(\bar A,\bar B)$, $A,B\subset \mathscr R^d$. Since $d_T(f,\phi)<\epsilon'/2$, there exists  a strictly increasing and  continuous curve $\lambda$ from  $[0,T]$  onto itself such that 
 $$\sup_{u\in [0,T]}|f_u-\phi_{\lambda_u}|<\epsilon'/2.$$  Thus,   we have $d({\rm Ran} \, f,{\rm Ran} \, \phi)= \sup_{t\in [0,T]} \inf_{s\in [0,T]}|f_t-\phi_s|\le \sup_{t\in [0,T]}|f_t-\phi_{\lambda_t}|<\epsilon'/2$.} (in particular $|f_T-z|\le |f_T-\phi_T| + |\phi_T-z|\le  d_T(f,\phi)+\epsilon/2<\epsilon$).  Then, using  \eqref{eq.Kulyk} with such a small  $\epsilon'>0$,  $\mathbb P_x[\{|L_T-z|<\epsilon\} \cap \{ \overline{\rm Ran}\, L_{[0,T]}\subset  \mathscr O\}]>0$. 
 Therefore,  \eqref{eq.Topo} holds.     The second statement in Proposition \ref{pr.Topo} is  easy to obtain with the same arguments.  
\end{proof}

 \noindent
\textbf{Examples}. 
As a conclusion, due to their importance both in theory and in applications, one can easily check with the discussion above that  the following examples of (isotropic) Lévy processes over $\mathscr R^d$ ($d\ge 2$) satisfy \textbf{[L1]}, and also  \textbf{[L2]}-\textbf{[L3]} for any subdomain $\mathscr O$ of $\mathscr R^d$:
 \begin{enumerate}
 \item[\textbf -] The standard  Brownian motion.
\item[\textbf -] The rotationally invariant $\alpha$-stable processes,  $ \alpha\in (0,2)$.   If $\alpha\in (0,2)$, these are  purely jump Lévy processes 
%(i.e. its Lévy triplet is given by $(0,0,\nu_0)$) 
with characteristic exponent   $\Psi(u)=|u|^\alpha$, see~\cite{Ryznar,ascione2024bulk,bogdan-book}.
%  
%and Lévy measure  $\nu_{0,\alpha}(dx)=F_{0,\alpha}(x)dx$, $F_{0,\alpha}(x)= C_\alpha dx/|x|^{\alpha+d}$, $C_\alpha>0$, 
\item[\textbf -] The rotationally invariant relativistic $\alpha$-stable processes,  $ \alpha\in (0,2)$, see~\cite{Ryznar,ascione2024bulk,bogdan-book}. For $m>0$, these are purely jump Lévy processes with characteristic exponent   $\Psi(u)=(|u|^\alpha+m^{2/\alpha})^{\alpha/2}-m$. 
% whose \textcolor{red}{Lévy measure $\nu_{m,\alpha}(dx)=F_{m,\alpha}(x)dx$ behaves like $F_{0,\alpha}(x)dx$ for small $|x|$}, 
\item[\textbf -]   The rotationally invariant geometric $\alpha$-stable processes,   $ \alpha\in (0,2)$. These are purely jump Lévy processes with characteristic exponent   $\Psi(u)=\log(1+|u|^\alpha)$, see~\cite{vsikic2006potential,bogdan-book}.
% and   \textcolor{red}{Lévy measure $\nu_\alpha(dx)= C|x|^{-d} (1+|x|)^{-\alpha}dx$}.
\item[-]  The rotationally symmetric geometric $2$-stable  process (also called the \textit{variance gamma process} in some finance literature). Its characteristic exponent   is $\Psi(u)=\log(1+|u|^2)$, see~\cite[Example 4.7]{kaleta2015pointwise}. 
% and   \textcolor{red}{Lévy measure $\nu_\alpha(dx)= C|x|^{-d} e^{-|x|}(1+|x|)^{(d-1)/2}dx$}.
\item[\textbf -] The jump-diffusion processes. These are   Lévy processes with characteristic exponent   $\Psi(u)= |u|^2+|u|^\alpha$, $\alpha\in (0,2)$, see~\cite{chen2011heat}. 
% and   \textcolor{red}{ Jump measure = celle du alpha stable}
\end{enumerate}  
Note that the first  five  processes above are subordinate Brownian motions.   
This list is clearly  a non exhaustive list and  there are   many other Lévy processes satisfying Assumptions \textbf{[L1]}, \textbf{[L2]}, and \textbf{[L3]}. 
 \medskip
 
 \noindent 
 \textbf{On Assumptions \textbf{[L4]}-\textbf{[L5]}}.  We consider here the fourth model (see \eqref{eq.FK-Ll} and Theorem \ref{th.Bn}).  Conditions   \textbf{[L4]}- \textbf{[L5]} hold when $(L^{ j}_t,t\ge 0)$'s are independent copies of a standard Brownian motion since in this case $\Theta(0)$ is also a standard Brownian motion (see \eqref{eq.theta}). Let us mention that  when $(L^{ j}_t,t\ge 0)$'s are independent copies of one of the examples  given just above in the appendix,    \textbf{[L4]}- \textbf{[L5]} are satisfied (one can  use e.g.~\cite{kulyk22} and explicit constructions of controls).  Let us for instance prove that \textbf{[L4]}- \textbf{[L5]} hold when $(L^{ j}_t,t\ge 0)$'s are independent copies of a jump-diffusion process. More precisely, let $(B^{\mathfrak 1},\ldots,B^{\mathfrak n},Z^{\mathfrak 1},\ldots,Z^{\mathfrak n})$ be $2n$ ($n\ge 2$)  independent $\mathscr R^d$-processes such that each $B^i$ is a $\mathscr R^d$-standard Brownian motion and each $Z^{\mathfrak i}$ is a $\mathscr R^d$-rotationally invariant $\alpha$-stable processes ($\alpha\in (0,2)$).  Then $\Theta=(B^{\mathfrak 1}+Z^{\mathfrak 1},  \ldots,B^{\mathfrak n}+ Z^{\mathfrak n})=  \mathfrak B+ \mathfrak Z $ where $\mathfrak  B=(B^{\mathfrak 1},  \ldots,B^{\mathfrak n})$ is a $\mathscr R^{dn}$-standard Brownian motion  and $\mathfrak Z =( Z^{\mathfrak 1},  \ldots, Z^{\mathfrak n})$. Let $\mathscr U$ be a subdomain of $\mathscr E$ (see \eqref{eq.eD}), $T>0$, and $\mathsf x,\mathsf z\in \mathscr U$. Pick a smooth curve $\Phi:[0,T]\to \mathscr U$ joining $\mathsf x$ to $\mathsf z$. Note that for any $\epsilon>0$,
 $$\mathbb P\big [\sup_{t\in [0,T]}|\mathsf x+\mathfrak B_t-\Phi_t|\le \epsilon\big ]>0.$$  Let now  $\epsilon_0 \in(0,1)$ be such that:  for any $\epsilon\in (0,\epsilon_0)$ and   for any càdlàg curve $f:[0,T]\to \mathscr R^{dn}$, $\sup_{t\in [0,T]}|f_t-\Phi_t|\le \epsilon \Rightarrow \overline{\rm Ran}\, f\subset \mathscr U$.  
  On the other hand, for any $\epsilon>0$, $\mathbb P[\sup_{t\in [0,T]}|Z^{\mathfrak 1}_t|^2<\epsilon]>0$. 
  % utiliser le thm de  Kulyk, facile de faire une courbe qui saute plein de fois tous les t_0 petits (mais qui dep que de la taille de la boule) et qui reste dans une boule. 
   Thus,    one has:
  \begin{align*}
  0&< \mathbb P\big [\sup_{t\in [0,T]}|\mathsf x+\mathfrak B_t-\Phi_t|\le \epsilon_0/2 \big ] \cdot \Pi_{i=1}^n \mathbb P [\sup_{t\in [0,T]}|Z^{\mathfrak i}_t|^2<\epsilon_0^2/(16n)  ]\\
  &= \mathbb P\big [\forall i, \sup_{t\in [0,T]}|Z^{\mathfrak i}_t|^2<\epsilon_0^2/(16n) , \sup_{t\in [0,T]}|\mathsf x+\mathfrak B_t-\Phi_t|\le \epsilon_0/2 \big ] \\
  &\le \mathbb P_0[\sup_{t\in [0,T]}|\underbrace{\mathsf x+\mathfrak B_t+\mathfrak Z_t}_{\Theta_t(\mathsf x)} -\Phi_t|\le 3\epsilon_0/4 ] .
  \end{align*}
 In conclusion this shows that \textbf{[L4]} is satisfied. A similar argument proves that \textbf{[L5]} is also satisfied for this process.

 \medskip

\noindent
 \textbf{Acknowledgement.}\\
 {\small A. Guillin   has benefited from a government grant managed by the Agence Nationale
de la
Recherche under the France 2030 investment plan ANR-23-EXMA-0001 and is supported by the ANR-23-CE-40003, Conviviality. 
B.N. is  supported by  the grant  IA20Nectoux from the Projet I-SITE Clermont CAP 20-25 and   by  the ANR-19-CE40-0010, Analyse Quantitative de Processus M\'etastables (QuAMProcs).}

 %%%%%

 %%%
 
 %%
 
% \section{Degenerate processes with singular Schrödinger  potential}

 {\small
 %%%%
 \bibliography{FKac} %You need to replace "rsc" on this
%line with the name of your .bib file
 
\bibliographystyle{plain}%the RSC's .bst file
 
}

 \end{document}